\documentclass[reqno, 11pt, a4paper]{amsart}
\usepackage{amsmath, amssymb, amsthm}  
\usepackage{graphicx, color}
\usepackage{mathtools}
\usepackage{mathrsfs, enumerate}
% mathrsfs can be used when compiling in arXiv? %
%\usepackage[colorlinks=true, allcolors=blue]{hyperref}
\usepackage{hyperref}
\usepackage[truedimen,top=3truecm, bottom=3truecm, left=2.5truecm, right=2.5truecm, includefoot]{geometry}

\newcommand{\ms}[1]{{\color{cyan} #1}}

%%%%%%%%%%%%%%%%%%%%%%%%%%%%%%%
%  For manuscript correction  %
%%%%%%%%%%%%%%%%%%%%%%%%%%%%%%%
\usepackage[normalem]{ulem}

%  After accepted  %
%\newcommand{\Add}[1]{\textcolor{black}{#1}}	
%\newcommand{\Erase}[1]{\if0{#1}\fi}

\usepackage{tikz}
\usetikzlibrary{shapes.misc}
\usetikzlibrary{arrows.meta}
\tikzset{cross/.style={cross out, draw=black, minimum size=2*(#1-\pgflinewidth), inner sep=0pt, outer sep=0pt},
%default radius will be 1pt. 
cross/.default={0.2cm}}
\definecolor{blue-violet}{rgb}{0.54, 0.17, 0.89}
\usetikzlibrary{decorations.pathreplacing}
\usetikzlibrary{patterns}
\begin{document}

\title[Gradient Condition for Asymmetric PEP and Their Scaling Limits]{Characterization of Gradient Condition for Asymmetric Partial Exclusion Processes and Their Scaling Limits}
%[Derivation of the SBE from weakly asymmetric PEPs]{Derivation of the SBE from weakly asymmetric partial exclusion processes}

\author[P. Gon\c{c}alves]{Patr\'{i}cia Gon\c{c}alves}
\address{Departamento de Matem\'atica, Instituto Superior T\'ecnico, Universidade de Lisboa, Av. Rovisco Pais, no. 1, 1049-001 Lisboa, Portugal.}
\email{pgoncalves@tecnico.ulisboa.pt}

\author[K. Hayashi]{Kohei Hayashi}
\address{RIKEN Interdisciplinary Theoretical and Mathematical Sciences Program, 2-1 Hirosawa, Wako, Saitama 351-0198 Japan}
%\email{kohei.hayashi.vh@riken.jp}
\email{koheihayashi0826@gmail.com}

\author[M. Sasada]{Makiko Sasada}
\address{Graduate School of Mathematical Sciences, The University of Tokyo, 3-8-1 Komaba, Meguro-ku, Tokyo 153-8914, Japan}
\email{sasada@ms.u-tokyo.ac.jp}

%\author[G.M. Sch\"{u}tz]{Gunter M. Sch\"{u}tz}
%\address{Departament de Math\'ematica, Instituto Superior Técnico, Universidade de Lisboa, Av. Rovisco Pais, 1049-001 Lisboa, Portugal.}
%\email{gunter.schuetz@tecnico.ulisboa.pt}
%\address{Institute of Complex Systems II, Forschungszentrum J\"{u}lich, 52425 J\"{u}lich, Germany\and 
%Interdisziplin\"{a}res Zentrum f\"{u}r Komplexe Systeme, Universit\"{a}t Bonn, Br\"{u}hler Str. 7, 53119 Bonn, Germany.}
%Center for Mathematical Analysis, Geometry and Dynamical Systems, Instituto Superior Técnico, Universidade de Lisboa, Av. Rovisco Pais, 1049-001 Lisboa, Portugal.}
%\email{g.schuetz@fz-juelich.de}

\begin{abstract}
We consider partial exclusion processes~(PEPs) on the one-dimensional square lattice, that is, a system of interacting particles where each particle random walks according to a jump rate satisfying an exclusion rule that allows up to a certain number of particles can exist on each site.  
Particularly, we assume that the jump rate is given as a product of two functions depending on occupation variables on the original and target sites. 
Our interest is to study the limiting behavior, especially to derive some macroscopic PDEs by means of (fluctuating) hydrodynamics, of fluctuation fields associated with PEPs, starting from an invariant measure. 
The so-called gradient condition, meaning that the symmetric part of the instantaneous current is written in a gradient form, and that the invariant measures are given as a product measure is technically crucial. 
Our first main result is to clarify the relationship between these two conditions, and we show that the gradient condition and the existence of product invariant measures are mutually equivalent, provided the jump rate is given in the above simple form, as it is imposed in most of the literature, and the dynamics is asymmetric.  
Moreover, when the width of the lattice tends to zero and the process is accelerated in diffusive time-scaling, we show that the family of fluctuation fields converges to the stationary energy solution of the stochastic Burgers equation (SBE), under the setting that the jump rate to the right neighboring site is a bit larger than the one to the left side, of which discrepancy is given as square root of the width of the underlying lattice.
This fills the gap at the level of universality of SBE since it has been proved for the exclusion process (a special case of PEP) and for the zero-range process. 
\end{abstract}

\keywords{Stochastic Burgers equation, KPZ universality, Interacting particle system, Generalized exclusion process, Partial exclusion process}  
\subjclass[2020]{60H15, 60K35, 82B44}
\maketitle

\theoremstyle{plain}
\newtheorem{theorem}{Theorem}[section] 
\newtheorem{lemma}[theorem]{Lemma}
\newtheorem{corollary}[theorem]{Corollary}
\newtheorem{proposition}[theorem]{Proposition}
\newtheorem{conjecture}[theorem]{Conjecture}

\theoremstyle{definition}
\newtheorem{definition}[theorem]{Definition}
\newtheorem{remark}[theorem]{Remark}
\newtheorem{assumption}[theorem]{Assumption}
\newtheorem{example}[theorem]{Example}

\makeatletter
\renewcommand{\theequation}{%
\thesection.\arabic{equation}}
\@addtoreset{equation}{section}
\makeatother

\makeatletter
\renewcommand{\p@enumi}{A}
\makeatother

\section{Introduction}
The Kardar-Parisi-Zhang equation, or the KPZ equation in short, is a stochastic partial equation of the following standard form: 
\begin{equation*}
\partial_t h 
= \Delta h 
+ (\nabla h)^2 
+ \sqrt{2}\dot{W}, 
\end{equation*}
where $h=h(t,x)$, $(t,x)\in [0,\infty)\times \mathbb R$ is an unknown object and $\dot{W}=\dot{W}(t,x)$ is the space-time white-noise. 
Throughout the present paper, we consider the one-dimensional line as the domain of the space variable. 
As a similar object, the following SBE is of interest.
\begin{equation*}
\partial_t u = 
\Delta u 
+ \nabla u^2 + \sqrt{2} \nabla \dot{W} .
\end{equation*}
The SBE, at least formally, is obtained by differentiating both sides of the KPZ equation and setting $u=\nabla h$. 
At first glimpse, the equation is ill-posed since the solution should be understood in the distributional sense, and then the nonlinear term does not make sense in an usual way. 
However, this has been overcome in several ways~\cite{hairer2013solving, gubinelli2017kpz, gonccalves2014nonlinear}, with a generalization which is applicable for various types of singular SPDEs~\cite{hairer2014theory, gubinelli2015paracontrolled}. 
In particular, the paper \cite{gonccalves2014nonlinear} introduced the notion of stationary energy solution to the KPZ/SB equation as a martingale problem, of which uniqueness is shown in \cite{gubinelli2018energy}, and this established a robust way to derive the KPZ/SB equation through scaling limits of microscopic systems.

On the other hand, what is physically intriguing for the KPZ/SB equation is its universality. 
Indeed, up to now, the KPZ/SB equation has been derived from many types of microscopic models \cite{bertini1997stochastic, gonccalves2015stochastic, diehl2017kardar, jara2019scaling, ahmed2022microscopic, cannizzaro2023abc} on the one-dimensional square lattice, under the so-called \textit{weakly asymmetric} regime. 
More precisely, density fluctuation fields associated to each model, when the width of the lattice, say $\varepsilon>0$, goes to zero and the process is accelerated by the diffusive time scaling $\varepsilon^{-2}$, and the jump rates are weakly asymmetric in the sense that the rate to one direction is larger than that to the other direction and the order of this discrepancy is $\varepsilon^{1/2}$, converge to the stationary solution of the SBE. 
Note here that there are some other works concerning the derivation of the KPZ equation from totally asymmetric models \cite{jara2020stationary, hayashi2022derivation, hayashi2023derivation, goncalves2023derivation}, for which cases the nonlinear term of the limiting SPDE is extracted by some perturbation argument for a driving potential of each model. 
This argument physically corresponds to forcing the temperature of the system to be divergent as a scaling parameter goes to infinity.

An aim of the present paper is to give another example of a microscopic model from which the derivation of the KPZ/SB equation is possible by means of fluctuating hydrodynamics. 
In order to establish equilibrium fluctuations, and also hydrodynamic limits, for interacting particle systems, the so-called \textit{gradient condition} is known to be technically significant. 
This condition means that the symmetric part of the instantaneous current can be written in a gradient form, which enables us to conduct a summation by parts twice, and thus gives rise to the viscosity term at the macroscopic equation. 
Moreover, regarding the study of equilibrium fluctuation, the existence of product invariant measures is also crucial and in most of the previous models this is indeed the case and the dynamics is initialized from these product invariant measures. 

In the present paper, we study equilibrium fluctuations of partial exclusion processes~(PEPs), which is an interacting particle system, where each particle jumps, on the one-dimensional square lattice, to nearest-neighbor sites with asymmetric rates.  
PEP is a generalization of simple exclusion process~(SEP) in the sense that a finite number of particles, say $\kappa$, can coexist at the same site, whereas for SEP, more than two particles cannot live on the same site, and to clarify this relation, the PEP is also referred to as $\kappa$-exclusion process. 
Moreover, we assume, as most of the previous results do, that the jump rate is given by the product of two functions, one of which depends only on the number of particles of the departing site, whereas the other function depends on that of the target site. 
For this situation, surprisingly, we show, as our first main result, that for asymmetric PEP the existence of product invariant measures is equivalent to the gradient condition.  
In particular, we can give an alternative example of asymmetric, gradient systems, of which invariant measure takes a product form. 
Then, following a robust approach established in \cite{gonccalves2014nonlinear}, we derive the SBE from density fluctuation fields associated to our asymmetric PEP, imposing the weak asymmetry.   
This is the second ingredient of the present paper.

\subsection*{Organization of the Paper}
In Section \ref{sec:pep_model}, we give a precise description of PEP and state the main results. 
First, we characterize the gradient condition of PEP, and then, under the gradient condition, state that the density fluctuations of PEP, in the weakly asymmetric regime, converge to the stationary energy solution of the SBE. 
In Section \ref{sec:pep_gradient}, we show the first result concerning the characterization of the gradient condition, and as well, we give a formula for the diffusion coefficient in the limiting equation. 
The forthcoming sections are devoted to proving the derivation of the SBE. 
First, in Section \ref{sec:pep_estimate}, we give some dynamical estimates which will be used to show the main theorem. 
These estimates include the $\mathscr H^{-1}$-norm estimate, which is referred to as the Kipnis-Varadhan estimate, and Boltzmann-Gibbs principle, which enables us to replace a local function of occupation variables by local averages. 
Next, in Section \ref{sec:pep_outline}, we give an outline of the proof, starting from a martingale decomposition of the density fluctuation field.  
In particular, each field appearing in the martingale decomposition is dealt with the Boltzmann-Gibbs principle, and see that the anti-symmetric part of the martingale decomposition gives rise to the nonlinear term of the SBE in the limit. 
Finally, in Section \ref{sec:pep_tightness} and \ref{sec:pep_identification}, we show the tightness of each term in the martingale decomposition, respectively.

\subsection*{Notation}
%Given two real-valued functions $f$ and $g$ depending on a variable $u\in\mathbb{R}^d$ we will write $f(u)\le C g(u)$ if there exists a constant $C>0$ such that $f(u)\le C g(u)$ for any $u$. 
%Moreover, we write $f=O(g)$ (resp. $f=o(g)$) in the neighborhood of $u_0$ if $|f|\le |g|$ in the neighborhood of $u_0$ (resp. $\lim_{u\to u_0}f(u)/g(u)=0$). 
%Sometimes it will be convenient to make precise the dependence of the constant $C$ on some extra parameters and this will be done by the standard notation $C(\lambda)$ if $\lambda$ is the extra parameter. 
When there is no confusion, in several estimates, we use the same letter $C$ as a positive constant which might be different from line to line. 
We denote by $\langle \cdot, \cdot \rangle_{L^2(\mathbb{R})}$  the inner product in $L^2(\mathbb{R})$, i.e. for any $f,g\in L^2(\mathbb{R})$ 
\begin{equation*}
\langle f, g\rangle_{L^2(\mathbb{R})}
\coloneqq \int_{\mathbb{R}} f(x) g(x) dx, 
\end{equation*}
 and by $\| \cdot\|_{L^2(\mathbb{R})}$  the $L^2(\mathbb{R})$-norm, i.e. 
$
\|f \|_{L^2(\mathbb{R})}
\coloneqq ( \langle f, f\rangle_{L^2(\mathbb{R})} )^{1/2}$.
%For each real sequence $g =(g_x)_{x\in\mathbb Z}$, we define a shift operator $T^-$ by $T^-_a g_x = g_{x-a}$ for any $a\in\mathbb R$, whereas 
Let $\tau_x$ be a canonical shift: $\tau_x g_y = g_{x+y}$ for any $x,y\in\mathbb Z$. 
Moreover, define the following discrete derivative operators:
\begin{equation*}
\nabla^n g_x = n(g_{x+1}-g_x),\quad
\Delta^n g_x = n^2 (g_{x+1} +g_{x-1} - 2g_x) .
\end{equation*}
Let $\mathcal{S}(\mathbb R)$ be the space of Schwartz functions and $\mathcal{S}'(\mathbb R)$ be its dual, i.e., the set of real-valued linear continuous functionals defined on $\mathcal{S}(\mathbb R)$. 
Let $C([0,T],\mathcal{S}'(\mathbb R))$ be the space of $\mathcal{S}'(\mathbb R)$-valued continuous functions on $[0,T]$ endowed with the uniform topology whereas let $D([0,T],\mathcal{S}'(\mathbb R))$ be the space of $\mathcal{S}'(\mathbb R)$-valued c\`adl\`ag (right-continuous and with left limits) functions on $[0,T]$ endowed with the Skorohod topology.

\section{Model and Result}
\label{sec:pep_model}
We consider the following one-dimensional asymmetric partial exclusion processes with nearest-neighbor interactions. 
Fix $\kappa\in\mathbb N$ and let $\mathscr X=\{0,1,\ldots, \kappa \}^{\mathbb Z}$ be the configuration space, and we denote each element of $\mathscr X$ by $\eta=\{\eta_x\}_{x\in\mathbb Z}$ where $\eta_x$ is the number of particles at the site $x\in\mathbb Z$. 
Let $n>0$ be the scaling parameter which will be sent to infinity and let $L_n$ be the operator defined on any local\footnote{We say that a function on the configuration space $\mathscr X$ is  \textit{local} if it depends only on a finite number of occupation variables, i.e.  if there exists a finite set $A \subset \mathbb Z$ such that $f (\eta) = f (\tilde\eta )$ for any
$\eta,\tilde\eta\in\Omega$ with $\eta(x) = \tilde \eta(x)$ for any $x \in A$ and the support of $f$, denoted by $\mathrm{supp}(f)$, is the smallest
of those sets.} function $f:\mathscr X\to \mathbb R$ by
\begin{equation}
\label{eq:generator_def}
L_nf(\eta)
%= \frac{n^2}{2} \sum_{\substack{x,y\in\mathbb Z, \\|x-y|\le 1} }
%r^n_{x,y}(\eta) \nabla_{x,y} f(\eta) 
=n^2\sum_{x\in\mathbb Z} p_n r_{x,x+1}(\eta) \nabla_{x,x+1}f(\eta)
+ n^2\sum_{x\in\mathbb Z} q_n r_{x,x-1}(\eta)\nabla_{x,x-1}f(\eta) 
\end{equation}
where $p_n,q_n\ge0$ are transition probabilities satisfying 
\begin{equation}
\label{eq:condition_tp}
p_n + q_n = 1. 
\end{equation}   
Above, for any $x,y \in \mathbb Z$ we set $\nabla_{x,y}f(\eta)= f(\eta^{x,y})-f(\eta)$ and $\eta^{x,y}$ is the configuration obtained after a particle jumps from $x$ to $y$: 
\begin{equation*}
\eta^{x,y}_z= 
\begin{cases}
\begin{aligned}
&\eta_x-1  &&\text{ if $z=x$,}\\
&\eta_y+1  &&\text{ if $z=y$,} \\
&\eta_z &&\text{ otherwise.}
\end{aligned}
\end{cases}
\end{equation*}
Throughout this paper, we assume that the jump rate takes the form 
\begin{equation}
\label{eq:jump_rate_assumption}
r_{x,y}(\eta) 
= r(\eta_x ,\eta_y)
= c(\eta_x)  d(\kappa - \eta_y)
\end{equation}
with some $c,d: \{0,1,\dots, \kappa\} \to [0,\infty)$ such that $c(0) = d(0)=0$, whereas $c(m)>0$ and $d(m)>0$ for $m \neq 0$. 
Without loss of generality, we may assume the following normalizing condition: 
\begin{equation}
\label{eq:jump_rate_normalization}
c(\kappa)=d(\kappa).
\end{equation}
Indeed, replacing $c(\cdot)$ by $c(\cdot)\sqrt{d(\kappa)}/\sqrt{c(\kappa)}$ and $d(\cdot)$ by $d(\cdot)\sqrt{c(\kappa)}/\sqrt{d(\kappa)}$ respectively, we obtain the same jump rate $r$. 
In particular, this normalization allows us to determine the pair $(c,d)$ uniquely from the jump rate $r$ given by the form \eqref{eq:jump_rate_assumption}.
Note that the jump rate $r$ clearly satisfies the following ellipticity condition: there exists a constant $\varepsilon_0 >0$ such that 
\begin{equation}
\label{eq:jump_rate_ellipticity}
\varepsilon_0 \le r(\eta_x ,\eta_{x+1}) 
\le 1/\varepsilon_0
\end{equation}
for any $\eta\in\mathscr X$ and $x\in\mathbb Z$ such that $\eta_x>0$ and $\eta_{x+1}<\kappa$.  
%The necessary and sufficient condition in order that invariant measures are of product form is given in \cite[Propostion 3.6]{fajfrova2016invariant}.
%\ms{It would be better to use the representation $r(\eta_x,\eta_y)=a c(\eta_x)d(\kappa - \eta_y) $ for some $a >0$, $c,d : \{0,1,\dots,\kappa\} \to [0,\infty)$ such that $c(0)=d(0)=0, c(\kappa)=d(\kappa)=1$ and $c(m)>0, d(m)>0$ for all $m \neq 0$. We can always rewrite the original jump rate into this form by taking $a:=c(\kappa)d(\kappa)$ and $c(\cdot) \to \frac{c(\cdot)}{c(\kappa)}$ and $d(\cdot) \to \frac{d(\cdot)}{d(\kappa)}$, and in this representation, $r$ and $(a,c,d)$ is one-to-one with $a=r(\kappa,0), d(m)=\frac{r(\kappa,\kappa-m)}{a}, c(m)=\frac{r(m,0)}{a}$.}\\
%\pg{(comments by Patricia) better to put $m!$ in the numerator to clarify the fact that the invariant measure matches the binomial distribution when the functions $c$ and $d$ are both identity.}

%%%%%%%%%%%%%%%%%%%%%%%%%%%%%%%%%%%%%%%%%%%%%%%%%%%%%%%%%%%%
\begin{figure}[htpb]%[h!]
\centering
\begin{tikzpicture}[scale=0.90]
\node at (4,0.7) (a) [fill,circle,inner sep=0.15cm,color=brown] { };
\node at (5,0.7) (a) [fill,circle,inner sep=0.15cm,color=brown] { };
\node at (5,1.3) (a) [fill,circle,inner sep=0.15cm,color=brown] { };
\node at (6,0.7) (b) [fill,circle,inner sep=0.15cm,color=brown] { };
\node at (6,1.3) (b') [fill,circle,inner sep=0.15cm,color=brown] { };
\node at (6,1.8) (b*) [fill,circle,inner sep=0.05cm,color=gray] { };
\node at (6,2) (b'') [fill,circle,inner sep=0.05cm,color=gray] { };
\node at (6,2.2) (b**) [fill,circle,inner sep=0.05cm,color=gray] { };
\node at (6,2.7) (o) [fill,circle,inner sep=0.15cm,color=brown] { };
\draw (0,0) -- (14,0);
%\foreach \x in {0,...,2} \draw (\x, 0.3) -- (\x, -0.3) node (\x) [above=1cm] { } node[below] {\x};
%\draw (0, 0.3) -- (0, -0.3); % node (0) [above=1cm] { } node[below] { };
\draw (1, 0.3) -- (1, -0.3); % node (0) [above=1cm] { } node[below] { };
\draw (2, 0.3) -- (2, -0.3); % node (0) [above=1cm] { } node[below] { };
\node at (0,0) (A) [below = 1.8cm] { };
\node at (1,0) (A') [below = 1.2cm] { };
\node at (3,-0.6) {\ldots};
\foreach \x\y in {4/$x-1$,6/$x+1$} \draw (\x, 0.3) -- (\x, -0.3) node (\x) [above=1.4cm] { } node [below] {\y};
\foreach \x\y in {5/$x$} \draw (\x, 0.3) -- (\x, -0.3) node (\x) [above=1.4cm] { } node [below=0.065cm] {\y};
\node at (7,-0.6) {\ldots};
\draw[-latex] (5) to[out=100,in=120] node[midway,font=\scriptsize,above] { } (o);
\draw (5.1,2.9) node[cross,brown] { };
\draw[-latex] (5) to[out=130,in=80] node[midway,above] {$\hspace{-1.9cm}q_nr_{x,x-1}$} (4);
\foreach \z\w in {8/$y-1$,10/$y+1$} \draw (\z, 0.3) -- (\z, -0.3) node (\z) [above=1.4cm] { } node [below] {\w};
\foreach \z\w in {9/$y$} \draw (\z, 0.3) -- (\z, -0.3) node (\z) [above=1.4cm] { } node [below=0.065cm] {\w};
\node at (8,0.7)  [fill,circle,inner sep=0.15cm,color=brown] { };
\node at (8,1.3) [fill,circle,inner sep=0.15cm,color=brown] { };
%\node at (8,1.2) [fill,circle,inner sep=0.05cm,color=gray] { };
%\node at (8,1.4) [fill,circle,inner sep=0.05cm,color=gray] { };
%\node at (8,1.6) [fill,circle,inner sep=0.05cm,color=gray] { };
%\node at (8,2.1) [fill,circle,inner sep=0.15cm,color=brown] { };
\node (c) at (9,0.7) [fill,circle,inner sep=0.15cm,color=brown] { };
%\node (c) at (9,1.3) [fill,circle,inner sep=0.15cm,color=brown] { };
%\node (c) at (9,1.2) [fill,circle,inner sep=0.05cm,color=gray] { };
%\node (c) at (9,1.4) [fill,circle,inner sep=0.05cm,color=gray] { };
%\node (c) at (9,1.6) [fill,circle,inner sep=0.05cm,color=gray] { };
\node (c) at (9,1.3) [fill,circle,inner sep=0.15cm,color=brown] { };
%\node (c1) at (9,2.1) [fill,circle,inner sep=0.15cm,color=brown] { };
\node (c1) at (9,1.9) [fill,circle,inner sep=0.15cm,color=brown] { };
\node at (9,1.9) (Z) [above] { };
\node at (8,1.9) (W) [above] { };
\node at (10,1.9) (Y) [above] { };
\draw[-latex] (Z) to[out=130,in=80] node[midway,above]  {$\hspace{-0.5cm}q_nr_{y,y-1}$} (W);
\draw [decorate,decoration={brace,amplitude=0.1cm, mirror}] (6.5,0.4) -- (6.5,3) node [black,midway,xshift=0.3cm] {$\kappa$};
\draw[-latex] (Z) to[out=60,in=120] node[midway,above] {$\hspace{1.0cm}p_nr_{y,y+1}$} (Y);
%\foreach \x\y in {12/$n-2$,13/$n-1$,14/$n$} \draw (\x, 0.3) -- (\x, -0.3) node (\x) [above=1cm] { } node [below] {\y};
\draw (12, 0.3) -- (12, -0.3); 
\draw (13, 0.3) -- (13, -0.3); 
\node at (12,0.7)  [fill,circle,inner sep=0.15cm,color=brown] { };
\node at (13,0.7)  [fill,circle,inner sep=0.15cm,color=brown] { };
\node at (13,1.3)  [fill,circle,inner sep=0.15cm,color=brown] { };
\node at (11,-0.6) {\ldots};
\node at (13,0) (B') [below = 1.2cm] { };
\node at (14,0) (B) [below = 1.8cm] { };
\end{tikzpicture}
\vspace*{-1cm}
\caption{The dynamics of the PEP.}
\label{fig:pep_dynamics}
\end{figure}
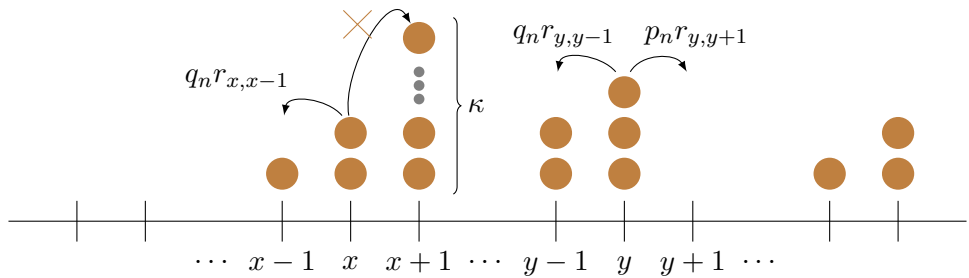

In what follows, let $\eta=\{ \eta_x(t): t \ge 0, x\in \mathbb Z\}$ be the Markov process on $\mathscr X$ generated by $L_n$ where we omit the dependency on $n$ by abuse of notation.  
The existence of the dynamics of PEP in infinite volume can be proved by \cite[Theorem 1.3.9]{liggett1985interacting}. 
Schematic description of the dynamics is shown in Figure~\ref{fig:pep_dynamics}. 
%Note that under the weak asymmetry \eqref{eq:weak_asymmetry} the jump rate $r_{x,x+1}$ depends on $n$ and so does the process $\eta$, but we omit this dependency by abuse on notation. 

Next, we define product measures associated with the process $\eta$. 
For each $\lambda > 0$, consider the following product measure on $\mathscr X$ whose marginal is given by  
\begin{equation*}
\overline \nu_\lambda(\eta_x=m)
= \frac{1}{Z_\lambda}
\frac{\lambda^m}{c!(m) d!(\kappa-m)} 
%\ \forall x \in \mathbb Z,\quad
\end{equation*}
for each $x\in\mathbb Z$ and $m \in \{0.\ldots,\kappa\}$, where 
\begin{equation*}
Z_\lambda
= \sum_{m=0}^\kappa
\frac{\lambda^m}{c!(m) d!(\kappa -m)}.   
\end{equation*}
Above, $c!(m)=c(m)c(m-1)\cdots c(1)$ for $m \ge 1$ and $d!(\kappa-m)=d(\kappa-m)d(\kappa-m-1)\dots d(1)$ for $\kappa- m \ge 1$ while we use the convention $c!(0)=1$ and $d!(0)=1$. 
When the jump rate $r$ satisfies \eqref{eq:jump_rate_assumption}, we have that  
\begin{equation}\label{eq:reversible}
r(\eta_x,\eta_y) \overline\nu_\lambda(\eta_x)\overline \nu_\lambda(\eta_y)
=r(\eta_y+1,\eta_x-1) \overline\nu_\lambda(\eta_x-1)\overline \nu_\lambda(\eta_y+1)
\end{equation}
for any $\eta_x \ge 1$ and $\eta_y \le \kappa-1$.
In particular, the PEP is reversible with respect to $\bar{\nu}_{\lambda}$ if $p_n=q_n$.
Since $(0,\infty) \to (0,\kappa) : \lambda \to E_{\bar{\nu}_{\lambda}}[\eta_0]$ is a strictly increasing bijection, it has an inverse function.
In the sequel, fix any $\rho \in (0,\kappa)$ and choose $\lambda = \lambda(\rho)$ in such a way that 
\begin{equation*}
E_{\overline\nu_{\lambda(\rho)}}[\eta_0] = \rho, 
\end{equation*}
in which situation we write $\nu_\rho= \overline\nu_{\lambda(\rho)}$. 
In what follows, let 
\begin{equation*}
\chi(\rho)=\mathrm{Var}_{\nu_\rho}[\eta_0]
\end{equation*}
be the static compressibility. 
By the explicit form of the measure, we have 
\begin{equation*}
E_{\overline\nu_{\lambda}}[\eta_0]=\lambda \frac{d}{d\lambda}(\log Z_{\lambda}), \quad \mathrm{Var}_{\overline\nu_{\lambda}}[\eta_0]=\lambda \frac{d}{d\lambda} \left( \lambda \frac{d}{d\lambda}(\log Z_{\lambda})\right)
\end{equation*}
and thus 
\begin{equation}
\label{eq:compressibility}
\chi(\rho)=\lambda(\rho)\left(\frac{d \lambda(\rho)}{d\rho}\right)^{-1}.
\end{equation}
For any local function $f:\mathscr X \to \mathbb{R}$ and $\rho \in (0,\kappa)$, let $\Phi_f$ be defined by  
\begin{equation}
\label{eq:definition_expectation_local_function}
\Phi_f(\rho)
= E_{\nu_{\rho}}[f].
\end{equation}

\subsection{Main Result 1: Characterization of the Gradient Condition} 
Recall from \eqref{eq:generator_def} the definition of the generator $L_n$.
Let $L^*_n$ denote the adjoint operator of $L_n$ with respect to $\nu_\rho$, acting on each local function on $\mathscr X$.  
Then, 
\begin{equation*}
L^*_n
= n^2 \sum_{x\in\mathbb Z} p_n r(\eta_{x+1},\eta_x)
\nabla_{x+1,x}
+ n^2 \sum_{x\in\mathbb Z} q_n r(\eta_{x},\eta_{x+1})
\nabla_{x,x+1}. 
\end{equation*}
Let 
\begin{equation*}
S_n=\frac{L_n+L^*_n}{2}
= \frac{n^2}{2}\sum_{x\in\mathbb Z} 
\big\{ r(\eta_x,\eta_{x+1}) \nabla_{x,x+1}
+ r(\eta_{x+1},\eta_{x}) \nabla_{x+1,x}\big\}
\end{equation*}
and 
\begin{equation*}
A_n=\frac{L_n-L^*_n}{2}
= \frac{n^2(p_n-q_n)}{2}\sum_{x\in\mathbb Z} 
\big\{ r(\eta_x,\eta_{x+1}) \nabla_{x,x+1}
- r(\eta_{x+1},\eta_{x}) \nabla_{x+1,x} \big\}
\end{equation*}
be the symmetric and anti-symmetric parts of the generator $L_n$, respectively. 
Above, we used the condition \eqref{eq:condition_tp} on the symmetric part. 
Let $W^S_{x,x+1}$ (resp. $W^A_{x,x+1}$) denote the instantaneous current of the symmetric (resp. anti-symmetric) part of the generator defined by
\begin{equation}
\label{eq:pep_current_computation_symmetric}
 W^S_{x,x+1}
= \frac{1}{2}\big( r(\eta_x,\eta_{x+1})-r(\eta_{x+1},\eta_x)\big)  
\end{equation}
and 
\begin{equation}
\label{eq:pep_current_computation_anti-symmetric}
 W^A_{x,x+1}
= \frac{p_n-q_n}{2}
\big( r(\eta_x,\eta_{x+1})+ r(\eta_{x+1},\eta_x) \big).
\end{equation}
Note that we have the relation $n^{-2}S_n\eta_x=W^S_{x-1,x}-W^S_{x,x+1}$ and  $n^{-2}A_n\eta_x=W^A_{x-1,x}-W^A_{x,x+1}$.

\begin{definition}
We say that the process $\eta$ is of gradient type, or it is gradient, if there exists some function $h: \{ 0,\ldots , \kappa\} \to \mathbb R$ such that for any $x \in \mathbb Z$, 
\begin{equation*}
W^S_{x,x+1}=h(\eta_x)-h(\eta_{x+1}),
\end{equation*}
otherwise we say that the process is of non-gradient type, or it is non-gradient. 
\end{definition}

Then we can show that the gradient condition for our asymmetric PEP is equivalent to the existence of product invariant measures. Moreover, we can completely characterize the class of jump rates satisfying the gradient condition.

\begin{theorem}
\label{thm:gradient_condition}
%Fix $\rho\in(0,\kappa)$. 
Assume $p_n \neq q_n$ and the jump rate is given by \eqref{eq:jump_rate_assumption}. 
Then the followings are equivalent: 
\begin{itemize}
    \item%[(a)] 
    The product measure $\nu_\rho$ is invariant under the dynamics of PEP for any $\rho \in (0,\kappa)$.
    \item%[(b)] 
    The PEP is of gradient type. 
    \item%[(c)] 
    For any $m\in \{0,\ldots , \kappa\}$, 
%\begin{equation}
%\erasekh{c(\kappa)d(\kappa)=c(\kappa)d(\kappa-m)+c(m)d(\kappa),}
%\end{equation}
%\erasekh{which, under the assumption \eqref{eq:jump_rate_normalization}, reads}
\begin{equation}
\label{eq:gradient_condition}
d(\kappa-m)=c(\kappa)-c(m).
\end{equation}
\end{itemize}
In particular, the above conditions are satisfied if and only if 
\[
r(\eta_x,\eta_y)=c(\eta_x)(c(\kappa)-c(\eta_y))
\]
for some function $c : \{0,1,\dots,\kappa\} \to [0,\infty)$ such that $c(0)=0, c(\kappa)>0$ and $0< c(m) < c(\kappa)$ for any $m \in \{1,2,\dots, \kappa-1\}$. Moreover, if this holds, then the gradient condition holds with $h(\cdot)=(1/2)c(\kappa)c(\cdot)$. 
\end{theorem}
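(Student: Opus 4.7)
The plan is to prove the implications $(1) \Leftrightarrow (2)$ and $(2) \Leftrightarrow (3)$, then read off the explicit form of the jump rate. I would first reduce the problem by observing that the detailed balance identity \eqref{eq:reversible} makes $\nu_\rho$ reversible for the symmetric generator $S_n$, so that $\int S_n f \, d\nu_\rho = 0$ for every local $f$. Writing $L_n = S_n + A_n$, invariance of $\nu_\rho$ under $L_n$ is therefore equivalent to $\int A_n f \, d\nu_\rho = 0$ for every local $f$; the hypothesis $p_n \neq q_n$ ensures the resulting identity is nontrivial.

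The key computation is to change variables in each term of $\int A_n f \, d\nu_\rho$, applying \eqref{eq:reversible} to transfer the jump operator from $f(\eta^{x,x+1})$ back onto $f(\eta)$. A short calculation then yields
\begin{equation*}
\int A_n f \, d\nu_\rho
= -2 n^2 (p_n - q_n) \sum_{x \in \mathbb Z} \int f(\eta) \, W^S_{x,x+1} \, d\nu_\rho .
\end{equation*}
Although the sum is formally over all of $\mathbb Z$, the product structure and translation invariance of $\nu_\rho$, together with $E_{\nu_\rho}[W^S_{x,x+1}] = 0$, imply that bonds $\{x, x+1\}$ disjoint from $\mathrm{supp}(f)$ contribute zero, so only finitely many terms survive.

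For $(2) \Rightarrow (1)$, substituting $W^S_{x,x+1} = h(\eta_x) - h(\eta_{x+1})$ into the displayed identity makes the finite sum telescope, and the resulting boundary terms decouple from $f$ under $\nu_\rho$ and vanish by translation invariance. For the converse $(1) \Rightarrow (2)$, I would test the identity with the indicator $f = \mathbf{1}_{\{\eta_0 = a,\, \eta_1 = b\}}$ for arbitrary $a, b \in \{0,\dots,\kappa\}$. Only the bonds $\{-1,0\}, \{0,1\}, \{1,2\}$ contribute nontrivially, and setting the sum to zero and dividing by $\nu_\rho(\eta_0=a)\nu_\rho(\eta_1=b) > 0$ produces
\begin{equation*}
r(a,b) - r(b,a) = \phi(a) - \phi(b), \qquad \phi(m) \coloneqq E_{\nu_\rho}[r(m,\eta_0) - r(\eta_0,m)],
\end{equation*}
which is precisely the gradient identity with $h = \phi/2$.

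To establish $(2) \Leftrightarrow (3)$, I would substitute the product form $r(a,b) = c(a) d(\kappa - b)$ into $r(a,b) - r(b,a) = 2h(a) - 2h(b)$. Setting $b = 0$ isolates $h(a) - h(0) = \tfrac{1}{2} c(a) d(\kappa)$; feeding this back and then specializing $b = \kappa$ (with $d(\kappa) = c(\kappa)$ from the normalization \eqref{eq:jump_rate_normalization}) yields \eqref{eq:gradient_condition}. The reverse direction is a direct algebraic verification: under \eqref{eq:gradient_condition}, $W^S_{x,x+1}$ collapses to $\tfrac{c(\kappa)}{2}(c(\eta_x) - c(\eta_{x+1}))$, giving the announced closed form of $h$. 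Finally, translating the constraint $d(m) > 0$ for $m \neq 0$ via \eqref{eq:gradient_condition} amounts to $c(j) < c(\kappa)$ for $j < \kappa$, yielding the characterization of the jump rate. I expect the most delicate step to be a rigorous handling of the formal sum in the second paragraph, which I would justify by an explicit truncation using the product structure and translation invariance of $\nu_\rho$.
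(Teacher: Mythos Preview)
Your proof is correct and shares the same overall architecture as the paper's: both reduce invariance of $\nu_\rho$ to the identity $E_{\nu_\rho}\bigl[\sum_x W^S_{x,x+1} f\bigr]=0$ for all local $f$ via the detailed-balance relation \eqref{eq:reversible}, and both handle $(2)\Leftrightarrow(3)$ by the same specialization argument (set one occupation variable to $0$, then to $\kappa$, and invoke the normalization \eqref{eq:jump_rate_normalization}).

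The one genuine difference is in the direction $(1)\Rightarrow(2)$. The paper expands $W^S_{0,1}$ in an orthonormal basis $(\phi_i\otimes\phi_j)_{i,j}$ of $L^2(\mu_\rho^2)$ with $\phi_0\equiv 1$, tests against $\phi_i(\eta_0)\phi_j(\eta_1)$ to kill the cross-coefficients $b_{ij}$, and against $\phi_i(\eta_0)$ to force $a^{(0)}_i+a^{(1)}_i=0$, yielding $W^S_{0,1}=\sum_i a^{(0)}_i(\phi_i(\eta_0)-\phi_i(\eta_1))$. You instead test against indicators $\mathbf{1}_{\{\eta_0=a,\eta_1=b\}}$, use the product structure to isolate the three contributing bonds, and read off directly the explicit formula $h(m)=\tfrac12 E_{\nu_\rho}[r(m,\eta_0)-r(\eta_0,m)]$. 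Your route is more elementary and produces a closed-form $h$ for free; the paper's basis method (borrowed from \cite{sasada2018green}) is slightly more abstract but adapts more mechanically to currents depending on more than two sites, since it separates the ``gradient'' subspace from its orthogonal complement without any ad hoc choice of test function.
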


\if0
\begin{figure}[htpb]
  \centering
  \begin{tikzpicture}[scale=1.5]
    \begin{scope} \clip (1,0) circle [radius=1];
      %\fill[black!10!] (0,0) circle [radius=2];
      \draw[pattern=north west lines] (0,0) circle [radius=1];
    \end{scope}
    \draw (0,0) circle [radius=1];
    \draw (1,0) circle [radius=1];
    \draw ({cos(pi*3/4 r)},{sin(pi*3/4 r)}) node[fill=white]{\hspace{-1cm} Gradient~(G)};
    \draw ({1+cos(pi/4 r)},{sin(pi/4 r)}) node[fill=white]{\hspace{0.1cm} Product (P)};
  \end{tikzpicture}
\caption{
Schematic description for the relation among PEPs. 
Let G be the set of gradient systems and let P be the set of PEPs with product invariant measures. 
When the jump rate has the form \eqref{eq:jump_rate_assumption} and the process is asymmetric, the two sets G and P coincide.
For more general cases, the intersection between the two sets is still rich and almost all previously-studied processes are included in this set.
}
\label{fig:venn_diagram}
\end{figure}
\fi

%\begin{remark}
%Conditions for product measures being invariant under the dynamics of PEP is studied in \cite{fajfrova2016invariant}, including more general cases. 
%In particular, \cite{fajfrova2016invariant} obtained the condition \eqref{eq:gradient_condition} for the asymmetric case $p_n\neq q_n$. 
%\ms{I checked the paper \cite{fajfrova2016invariant} carefully and found that they only consider the case without any exclusion rule. So, our result is not directly related their results. For misanthropes processes whose one site state space is $\mathbb{N}$, they consider general jump rates $r(\eta_x,\eta_y)$ (not necessarily the form \eqref{eq:jump_rate_assumption}) and proved that the conditions $r(m,\ell)=r(\ell,m)+r(m,0)-r(\ell,0)$ and the reversibility for the symmetric part are necessary and sufficient condition to have product invariant measures. This condition actually implies that the symmetric current $r(m,\ell)-r(\ell,m)$ must be the gradient form $r(m,0)-r(\ell,0)$, and in this sense they obtained the same relation (but not explicitly mentioned). See the equations (3.13) and (3.14) of their paper. }
%\end{remark}

\begin{remark}
Here we give some comments on some papers studying the invariance of product measures. 
In the paper \cite{cocozza1985processus}, misanthrope processes were introduced and the invariant measures for the processes were studied.   
They assume, however, that the jump rates are defined in $\mathbb N\times\mathbb N$ and are increasing in the first variable and decreasing in the second one, which rules out our choice of rates. 
%\kh{asymmetric?}
On the other hand, we should mention here the paper \cite{sethuraman2018hydrodynamic} which studies the hydrodynamic limit for the misanthrope process, relaxing the assumption on the misanthrope process from that in the original paper \cite{cocozza1985processus} and allowing our PEPs to be in the scope. 
They also give a condition that product measures are invariant for the process under the restriction \eqref{eq:jump_rate_assumption}, for which case they call the process \textit{decomposable} misanthrope systems. 
%\erasems{and the condition that they derived is nothing but } 
The condition that they derived also works for decomposable misanthrope systems without exclusive constraint, and we have shown that, for PEPs the condition is simplified as \eqref{eq:gradient_condition}.  
Also, they do not point out the equivalence of the condition to the gradient condition, and our additional contribution is that we unveiled the relation. We also remark that for exclusion processes, this equivalence was proved in \cite[Proposition 5.1]{nagahata1998}.
%\pat{here we should say that besides the paper  of Coccoza  on misanhrope processes does not allow for our rates, since it is assumed that the jump rates are defined in $\mathbb N\times \mathbb N$ and are increasing in the first variable and decreasing in the second one, which rules out our choice of rates, neverthless misanthrope processes can be more general as mentioned in sunder's paper and our rates are there included. agree?}
%Our contribution is that we related the condition to that the PEP is of gradient type. 
\end{remark}

\begin{example}[SEP($\kappa$)] 
When $c(m)=d(m)=m$, the process is of gradient type and the measure $\nu_\rho$ becomes a product binomial distribution: 
\begin{equation*}
\nu_\rho(\eta_x=m)
= %\begin{pmatrix} \kappa \\ m \end{pmatrix}
\frac{\kappa!}{m!(\kappa-m)!} 
%(\rho/\kappa)^m (1-\rho/\kappa)^{\kappa-m}. 
\Big(\frac{\rho}{\kappa}\Big)^m \Big(1-\frac{\rho}{\kappa}\Big)^{\kappa-m}. 
\end{equation*}
In this case, for any $p_n$ and $q_n$, the product measure $\nu_\rho$ is invariant under the dynamics of the PEP. 
In particular, the process with this form of jump rate is called SEP($\kappa$) and its hydrodynamic limit is the linear heat equation.
Moreover, it is pointed out in \cite{schutz1994non} that the stochastic duality holds only for this choice of rates. 
\end{example}

\begin{example}
When $c(m)=d(m)=\mathbf{1}_{m>0}$, we obtain the model studied in \cite[Chapter 7]{kipnis1999scaling} and the process is non-gradient if $\kappa \ge 2$. 
For this jump rate, the product measures we defined above become the following truncated geometric distribution whose common marginal is given by 
\begin{equation*}
\nu_\rho (\eta_x=m)
= \frac{\lambda^m}{1 + \lambda + \cdots +\lambda^\kappa},
\end{equation*}
where $\lambda=\lambda(\rho)$ is chosen in such a way that $E_{\nu_\rho}[\eta_0]= \rho$ for any given $\rho \in (0,\kappa)$.
Moreover, when the process is symmetric in the sense that $p_n = q_n$, this product measure is invariant and reversible for the PEP. 
For the symmetric case, \cite{kipnis1994hydrodynamical} and \cite{sellami1999equilibrium} study the hydrodynamic limit and equilibrium fluctuations, respectively.   
Note, however, that for the asymmetric case $p_n\neq q_n$, the above product measure is not invariant for the PEP. 
Without any knowledge of the invariant measures, only for the totally asymmetric case, the hydrodynamic limit is obtained in \cite{seppalainen1999}, albeit the study of fluctuations is missing.  
\end{example}

\begin{example}
To interpolate the above two models, we can consider a one-parameter family of functions $c(m)=d(m)=(1-\theta)\mathbf{1}_{m>0} + \theta m$ with a parameter $\theta\in [0,1]$, for instance, and obtain the sequences of product measures $\{\nu_{\rho}^{\theta}\}_{\theta}$ interpolating the product binomial distribution and the product truncated geometric distribution.
We may also interpolate them in a different way. 
\end{example}

\begin{remark}
Here we give a comment on some duality relation for the PEP with the jump rate of the form \eqref{eq:jump_rate_assumption}.  
We can see that when the PEP is driven by the quadruple $(p_n,q_n,c,d)$, then the dynamics of ``holes'' $\hat{\eta}(t)\coloneqq(\kappa-\eta_{x}(t))_x$ turns out to be that of the PEP driven by $(q_n,p_n,d,c)$. 
%as $\eta(t)$, namely $\hat{\eta}(t)$ is the Markov process generated by $L_n$. For the jump rate of the form \eqref{eq:jump_rate_assumption}, the PEP has the hole-particle duality if and only if $c(\cdot)=d(\cdot)$.
This property is referred to as the particle-hole duality, particularly when $p_n=q_n$ and $c=d$. 
\end{remark}
%\pat{above should be $\hat{\eta}(t) :=(\kappa-\eta_{x}(t))_x$, agree?}

\subsection{Main Result 2: Scaling Limits for Density Fields}
Next, we state our second main result which is concerned with the time evolution of fluctuation fields associated with our asymmetric PEPs. 
To state the result, here we recall the notion of stationary energy solution, which is introduced in~\cite{gonccalves2014nonlinear}, of the $(1+1)$-dimensional SBE of the following form:
\begin{equation}
\label{eq:SBE_general}
\partial_t u = D \Delta u 
+ \Lambda \nabla u^2 + \sqrt{2\chi D} \nabla \dot{W} 
\end{equation}
where $D,\chi > 0$ and $\Lambda \in \mathbb{R}$ and $\dot{W}$ denotes the one-dimensional space-time white-noise.   
%In what follows, let $\mathcal S(\mathbb R)$ be the space of Schwartz functions and $\mathcal{S}'(\mathbb R)$ its dual, i.e. the set of linear continuous functionals defined on $\mathcal S(\mathbb R)$ and taking real values. 
%\ms{(I erased the above paragraph as it is already given in the notation. Please check.)}
We begin with the definition of stationarity.

\begin{definition}
We say that an $\mathcal{S}^\prime (\mathbb{R})$-valued process $u = \{ u_t  : t \in [0,T] \} $ satisfies the condition \textbf{(S)} if for all $t \in [0,T]$, the random variable $u_t$ has the same distribution as the space white-noise with variance $\chi$. %$D/(2\nu)$. 
\end{definition}

For a process $u = \{ u_t: t \in [0,T]\}$ satisfying the condition \textbf{(S)}, we define 
\begin{equation}
\label{eq:def_quadratic_function_approximation}
\mathcal{A}^\varepsilon_{s,t} (\varphi) 
= \int_s^t \int_{\mathbb{R}} u_r (\iota_\varepsilon (x;\cdot))^2 \nabla \varphi (x) dx dr ,  
\end{equation}
for every $0 \le s < t \le T $, $\varphi \in \mathcal{S} (\mathbb{R} ) $ and $\varepsilon > 0 $. 
Above, the function $\iota_\varepsilon (x ; \cdot ) : \mathbb{R} \to \mathbb{R}$ is given by 
\begin{equation}\label{eq:definition_iota}
\iota_\varepsilon (x;y) 
=\varepsilon^{-1} \mathbf{1}_{[x,x+\varepsilon)}(y) 
\end{equation}
for each $x \in \mathbb{R} $ and $\epsilon>0$. 
Although the function $\iota_\varepsilon(x,\cdot)$ does not belong to the Schwartz space, the quantity \eqref{eq:def_quadratic_function_approximation} is well-defined. To that end it is enough to consider a sequence of smooth compactly supported functions $(\rho_{\varepsilon,k})_k$ with $\rho_{\varepsilon,k}:\mathbb R\to[0,+\infty)$ such that $\|\rho_{\varepsilon,k}\|_2^2\leq 2\|\iota_\varepsilon\|_2^2=\varepsilon^{-1}$ for all $k\in\mathbb N$ and $\displaystyle \lim_{k\to+\infty}  \|\iota_\varepsilon-\rho_{\varepsilon,k}\|_2^2=0$.
Replacing $\iota_\varepsilon$ by $\rho_{\varepsilon,k}$ in $\mathcal{A}^\varepsilon_{s,t} (\varphi) $,  the resulting sequence is  Cauchy  in $L^2$ and we can give a sense to $\mathcal{A}^\varepsilon_{s,t} (\varphi) $ as  the limit of that sequence. For details we refer the reader to, for example~\cite[Section 2.2]{gonccalves2015stochastic}.
\begin{definition}
Let $u = \{ u_t :t \in [0,T]\}$ be a process satisfying the condition \textbf{(S)}. 
We say that the process $u$ satisfies the energy condition \textbf{(EC)} if there exists a constant $K>0$ such that for any $\varphi \in \mathcal{S} (\mathbb{R} )$, any $0 \le s<t\le T$ and any $0 < \delta < \varepsilon < 1 $,  
\begin{equation*}
\mathbb E \big[ \big| \mathcal{A}^\varepsilon_{ s, t } (\varphi ) - \mathcal{A}^\delta_{ s, t } (\varphi ) \big|^2 \big] 
\le K \varepsilon (t-s) \| \nabla \varphi \|^2_{ L^2(\mathbb{R})} .
\end{equation*}
Here $\mathbb{E}$ denotes the expectation with respect to the measure of a probability space where the process $u$ lives. 
\end{definition}
The following result is proved in \cite{gonccalves2014nonlinear}. 

\begin{proposition}
\label{prop:nonlinear}
Assume $\{ u_t:t\in [0,T]\} $ satisfies the conditions \textbf{(S)} and \textbf{(EC)}. Then there exists an $\mathcal{S}^\prime (\mathbb{R} )$-valued process $\{ \mathcal{A}_t : t \in [0, T ] \} $ with continuous trajectories such that  
\begin{equation*}
\mathcal{A}_t (\varphi ) = \lim_{ \varepsilon \to 0 } \mathcal{A}^\varepsilon_{ 0, t } (\varphi) ,
\end{equation*}
in $L^2 $ for every $t \in [0,T]$ and $\varphi \in \mathcal{S}(\mathbb{R})$.  
\end{proposition}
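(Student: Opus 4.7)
The strategy is a Cauchy-sequence argument in $L^2$, upgraded to a continuous $\mathcal{S}^\prime(\mathbb{R})$-valued process via Kolmogorov's criterion, whose key input is a Hölder-in-time bound with exponent strictly above $1$. The plan has three steps.

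\emph{Step 1 (existence of the limit in $L^2$).} Fix $t\in[0,T]$ and $\varphi\in\mathcal{S}(\mathbb{R})$. Applying \textbf{(EC)} with $s=0$, for every $0<\delta<\varepsilon<1$,
\[
\mathbb{E}\big[|\mathcal{A}^\varepsilon_{0,t}(\varphi)-\mathcal{A}^\delta_{0,t}(\varphi)|^2\big]\le K\varepsilon t\,\|\nabla\varphi\|_{L^2(\mathbb{R})}^2,
\]
so $\{\mathcal{A}^\varepsilon_{0,t}(\varphi)\}_{\varepsilon>0}$ is Cauchy in $L^2$ and its $L^2$-limit is the desired $\mathcal{A}_t(\varphi)$. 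The same estimate for generic $0\le s<t\le T$ defines $\mathcal{A}_{s,t}(\varphi):=\lim_{\varepsilon\to 0}\mathcal{A}^\varepsilon_{s,t}(\varphi)$, and passing the time-additivity of \eqref{eq:def_quadratic_function_approximation} to the limit yields $\mathcal{A}_t(\varphi)-\mathcal{A}_s(\varphi)=\mathcal{A}_{s,t}(\varphi)$. Linearity in $\varphi$ survives the $L^2$-limit, and letting $\delta\to 0$ in \textbf{(EC)} bounds $\mathcal{A}_t(\varphi)$ by the continuous seminorm $\|\nabla\varphi\|_{L^2}$; a countable-dense argument then produces a version valued in $\mathcal{S}^\prime(\mathbb{R})$.

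\emph{Step 2 (the Hölder estimate).} I aim for $\mathbb{E}[|\mathcal{A}_{s,t}(\varphi)|^2]\le C(t-s)^{3/2}\|\nabla\varphi\|_{L^2}^2$. Splitting $\mathcal{A}_{s,t}(\varphi)=(\mathcal{A}_{s,t}(\varphi)-\mathcal{A}^\varepsilon_{s,t}(\varphi))+\mathcal{A}^\varepsilon_{s,t}(\varphi)$ and applying \textbf{(EC)} (after sending $\delta\to 0$) to the first piece,
\[
\mathbb{E}[|\mathcal{A}_{s,t}(\varphi)|^2]\le 2K\varepsilon(t-s)\|\nabla\varphi\|_{L^2}^2+2\mathbb{E}[|\mathcal{A}^\varepsilon_{s,t}(\varphi)|^2].
\]
For the second piece, setting $g_\varepsilon(r):=\int_{\mathbb{R}}u_r(\iota_\varepsilon(x;\cdot))^2\nabla\varphi(x)dx$, Cauchy--Schwarz in time together with the stationarity from \textbf{(S)} gives $\mathbb{E}[|\mathcal{A}^\varepsilon_{s,t}(\varphi)|^2]\le(t-s)^2\mathbb{E}[g_\varepsilon(0)^2]$. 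By \textbf{(S)} each $u_0(\iota_\varepsilon(x;\cdot))$ is centered Gaussian with variance $\chi/\varepsilon$; Isserlis' formula gives
\[
\mathrm{Cov}\big(u_0(\iota_\varepsilon(x;\cdot))^2,u_0(\iota_\varepsilon(y;\cdot))^2\big)=2\chi^2\langle\iota_\varepsilon(x;\cdot),\iota_\varepsilon(y;\cdot)\rangle^2,
\]
while the constant mean $\chi/\varepsilon$ drops out of $g_\varepsilon(0)$ because $\int\nabla\varphi=0$. The remaining kernel is supported in $|x-y|<\varepsilon$ and dominated by $\varepsilon^{-1}\langle\iota_\varepsilon(x;\cdot),\iota_\varepsilon(y;\cdot)\rangle$, so Young's inequality yields $\mathbb{E}[g_\varepsilon(0)^2]\le 2\chi^2\varepsilon^{-1}\|\nabla\varphi\|_{L^2}^2$. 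Choosing $\varepsilon=\sqrt{t-s}$ balances the two contributions and delivers the claimed Hölder bound.

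\emph{Step 3 (continuous trajectories, and the main difficulty).} The bound $\mathbb{E}[|\mathcal{A}_t(\varphi)-\mathcal{A}_s(\varphi)|^2]\le C|t-s|^{1+1/2}$ satisfies Kolmogorov's continuity criterion and yields a continuous modification of $t\mapsto\mathcal{A}_t(\varphi)$ for each fixed $\varphi$; a countable-dense family of test functions, combined with the distributional bound of Step 1, upgrades this to a single $\mathcal{S}^\prime(\mathbb{R})$-valued process with continuous trajectories. The main obstacle is the second step: one must identify the exact divergence $\varepsilon^{-1}$ of $\mathbb{E}[g_\varepsilon(0)^2]$, which is only accessible after recognizing that the otherwise dominant $(\chi/\varepsilon)^2$ contribution is killed by $\int\nabla\varphi=0$; without this cancellation, the balance against the $\varepsilon$-gain of \textbf{(EC)} would only produce a time exponent equal to $1$, and Kolmogorov's criterion would not apply.
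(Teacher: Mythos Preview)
The paper does not prove this proposition itself; it merely cites \cite{gonccalves2014nonlinear} for the proof. Your argument is correct and is essentially the one given in that reference: the Cauchy property from \textbf{(EC)}, the $(t-s)^{3/2}$ H\"older bound obtained by balancing the $\varepsilon(t-s)$ gain of \textbf{(EC)} against the direct Gaussian second-moment estimate $(t-s)^2\varepsilon^{-1}$ (after the crucial cancellation from $\int\nabla\varphi=0$), and then Kolmogorov's criterion for continuity.
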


From the last proposition, thinking that the weak form of the singular term $\nabla u^2 $ is given by the last quantity, we can define a solution of \eqref{eq:SBE_general} as follows.

\begin{definition}
\label{def:energysol}
We say that an $\mathcal{S}^\prime(\mathbb{R})$-valued process $u=\{u (t, \cdot) : t\in [0,T] \}$ is a stationary energy solution of the SBE \eqref{eq:SBE_general} if all of the followings are satisfied.
\begin{enumerate}
\item The process $u$ satisfies the conditions \textbf{(S)} and \textbf{(EC)}. 
\item For all $\varphi \in \mathcal{S} (\mathbb{R} )$, the process 
\begin{equation*}
u_t(\varphi) - u_0 (\varphi) - D\int_0^t u_s (\Delta \varphi ) ds 
+ \Lambda \mathcal{A}_t (\varphi) ,
\end{equation*}
is a martingale with quadratic variation $2\chi D \| \nabla\varphi \|^2_{ L^2 (\mathbb{R} ) } t $ where $\mathcal{A}_\cdot$ is the process obtained in Proposition \ref{prop:nonlinear}. 
\item For all $\varphi \in \mathcal{S} (\mathbb{R} )$, writing $\hat{u}_t = u_{T-t}$ and $\hat{ \mathcal{A} }_t = - (\mathcal{A}_T - \mathcal{A}_{ T- t })$, the process
\begin{equation*}
\hat{u}_t (\varphi) - \hat{u}_0(\varphi) - D \int_0^t \hat{u}_s (\Delta\varphi) ds 
+ \Lambda \hat{\mathcal{A}}_t (\varphi) ,
\end{equation*}
is a martingale with quadratic variation $2\chi D \| \nabla\varphi \|^2_{L^2(\mathbb{R})}t$. 
\end{enumerate}
\end{definition}

Then there exists a unique-in-law stationary energy solution of \eqref{eq:SBE_general}. 
Existence was shown in \cite{gonccalves2014nonlinear} and then uniqueness was proved in \cite{gubinelli2018energy}.

Now, we are in a position to state our second theorem on scaling limits of the PEP. 
Here and in what follows, we assume the gradient condition \eqref{eq:gradient_condition}.
First, to derive the hydrodynamic limit, let $\pi^n_t$ be the empirical measure defined by 
\begin{equation*}
\pi^n_t
= \frac{1}{n}\sum_{x\in\mathbb Z}
\eta_x(t) \delta_{x/n}
\end{equation*}
where $\delta_{x/n}(\cdot)$ is the Dirac delta measure on $\mathbb R$ with the mass on $x/n$.  
When we impose the weak asymmetry in the sense that 
\begin{equation*}
p_n-q_n = \alpha_0/n 
\end{equation*}
for some $\alpha_0\in\mathbb R$, a drift term appears in the limiting equation. 
In this case, at least heuristically, the hydrodynamic equation of PEP would be 
\begin{equation}
\label{eq:pep_hydrodynamic_equation}
\partial_t\hat\rho_t={\Delta \tilde{D}(\hat\rho_t)+ \nabla\tilde{v}(\hat\rho_t)  =\nabla(D(\hat\rho_t)\nabla \hat\rho_t)}
+\nabla\tilde{v}(\hat\rho_t)  
\end{equation}
where 
\[
\tilde{D}(\rho)=\Phi_h(\rho)=\frac{1}{2}c(\kappa)\Phi_c(\rho),
\]
\begin{equation}
\label{eq:diffusion_coefficient}
D(\rho)=\frac{d}{d\rho} \tilde{D}(\rho),
\end{equation}
%the diffusion coefficient $D:[0,\kappa]\to\mathbb R_+$ is given by 
%\begin{equation}
%
%D(\rho) = \frac{1}{2}d(\kappa)\frac{\partial}{\partial\rho} E_{\nu_\rho}[c(\eta_0)]
%\end{equation}
and the macroscopic velocity $\tilde{v}$ is defined by 
\begin{equation*}
\tilde{v}(\rho)
= \alpha_0\Phi_r(\rho)=\alpha_0 \Phi_c(\rho)(c(\kappa)-\Phi_c(\rho)).
\end{equation*}

More precisely, $\pi^n_t$ converges in probability to a deterministic measure which is absolutely continuous with respect to the Lebesgue measure and its density $\hat\rho_t$ is the weak solution of \eqref{eq:pep_hydrodynamic_equation}. 
Although an exact result rigorously proving the hydrodynamic limit is missing, we believe that the procedure would be justified in a similar way as conventional works, see \cite{kipnis1999scaling}, particularly Chapter 5 for gradient systems, and Chapter 7 for non-gradient PEP with some specific jump rate.

%\begin{remark}
%say something about example. relation to zrp, interpolation. 
%\end{remark}

Next, we state the result for the fluctuating hydrodynamics. 
In what follows, we impose the \textit{weak asymmetry} in the sense that the absolute value of the discrepancy between $p_n$ and $q_n$ goes to zero more slowly than the scale $1/n$ for the space variable.   
%\begin{equation}
%\label{eq:weak_asymmetry}
%p_n - q_n = \alpha_n. 
%\end{equation}
In order to derive the SBE, we will take $p_n-q_n=\alpha n^{-1/2}$ for some $\alpha\in\mathbb R$. 
Here recall that the space $D([0,T],\mathcal S'(\mathbb R))$ is
the set of c\`adl\`ag (right-continuous and with left limits) trajectories in $\mathcal S'(\mathbb R)$.
Let $\mathcal{X}^n_\cdot \in D([0,T],\mathcal{S}'(\mathbb R))$ be the fluctuation field, associated to our PEP, defined on $\varphi \in \mathcal{S}(\mathbb R)$ by 
\begin{equation*}
\mathcal{X}^n_t(\varphi)
= \frac{1}{\sqrt{n}} \sum_{x\in\mathbb Z}
\overline\eta_x(t) T^-_{v_nt} \varphi^n_x
\end{equation*}
where we used the short-hand notation
\begin{equation*}
T^-_{v_nt} \varphi^n_x 
= \varphi\Big( {\frac{x-v_nt}{n}}\Big) 
\end{equation*}
and
\begin{equation}
\label{eq:pep_moving_frame}
v_n = \alpha n^{3/2}  \frac{\partial}{\partial\rho}\Phi_r(\rho)
\end{equation}
is the velocity of the moving frame. 
Above, bar over variables means centering with respect to $\nu_\rho$: $\overline{\eta}_x = \eta_x - E_{\nu_\rho}[\eta_x]$, for instance. 
Then we can show, under the weakly asymmetry $p_n-q_n=O(n^{-1/2})$, that the limit of the fluctuation fields $\mathcal{X}^n_\cdot$ is characterized by the stationary energy solution of the SBE.

\begin{theorem}
\label{thm:sbe_from_pep}
Fix $T>0$ and $\rho\in(0,\kappa)$. 
Assume $p_n-q_n=\alpha n^{-1/2}$ for some $\alpha\in\mathbb R$.  
Moreover, assume \eqref{eq:gradient_condition}, meaning that the process is of gradient type. 
Then, as $n\to \infty$, the sequence $\mathcal{X}^n_\cdot$ converges in distribution on $D([0,T],\mathcal{S}'(\mathbb R))$ to the stationary energy solution of the SBE
\begin{equation*}
\partial_tu 
= D(\rho) \Delta u 
- \Lambda(\rho) \nabla u^2
+ \sqrt{ 2\chi(\rho) D(\rho)} \nabla \dot W.
\end{equation*}
Above, $D(\rho)$ is defined in \eqref{eq:diffusion_coefficient}, which is strictly positive when $\rho\in(0,\kappa)$, and 
\begin{equation}
\label{eq:sbe_nonlinear_coeff}
\Lambda(\rho) = 
\frac{\alpha}{2} \frac{\partial^2}{\partial \rho^2} 
\Phi_{r}(\rho) 
=\frac{\alpha}{2}\frac{\partial^2}{\partial \rho^2} \big(\Phi_c(\rho)(c(\kappa)-\Phi_c(\rho))\big)
\end{equation}
and $\dot W$ is the one-dimensional space-time white-noise. 
\end{theorem}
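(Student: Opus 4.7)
\medskip

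The plan is to follow the robust martingale approach of \cite{gonccalves2014nonlinear}. By Dynkin's formula, for each $\varphi\in\mathcal S(\mathbb R)$ the process
\begin{equation*}
\mathcal M^n_t(\varphi)
= \mathcal X^n_t(\varphi) - \mathcal X^n_0(\varphi)
- \int_0^t \big( L_n + \partial_s \big) \mathcal X^n_s(T^-_{v_n s}\varphi) \, ds
\end{equation*}
is a martingale. The first step is to compute the integrand: using $L_n=S_n+A_n$ together with the identities $n^{-2}S_n\eta_x = W^S_{x-1,x}-W^S_{x,x+1}$ and $n^{-2}A_n\eta_x=W^A_{x-1,x}-W^A_{x,x+1}$, summing by parts against the test function and centering with respect to $\nu_\rho$ yields a symmetric and an anti-symmetric field to analyze.

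For the symmetric part, the gradient condition supplied by Theorem \ref{thm:gradient_condition} gives $W^S_{x,x+1}=h(\eta_x)-h(\eta_{x+1})$ with $h=\tfrac12 c(\kappa)c$. A second summation by parts produces a field of the form $n^{-5/2}\sum_x \overline{h(\eta_x(s))}\,\Delta^n T^-_{v_ns}\varphi^n_x$, and a first-order Boltzmann--Gibbs principle (Section \ref{sec:pep_estimate}) replaces $\overline{h(\eta_x)}$ by $\Phi_h'(\rho)\overline\eta_x = D(\rho)\overline\eta_x$, giving the Laplacian term $D(\rho)\int_0^t\mathcal X^n_s(\Delta\varphi)ds$ in the limit. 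For the anti-symmetric part, the term $W^A_{x,x+1}=\tfrac{p_n-q_n}{2}\big(r(\eta_x,\eta_{x+1})+r(\eta_{x+1},\eta_x)\big)$ carries the prefactor $\alpha n^{-1/2}$; after subtracting the velocity of the moving frame (precisely tuned so that $v_n=\alpha n^{3/2}\Phi_r'(\rho)$ cancels the linear-in-$\overline\eta$ piece from the Taylor expansion of $\Phi_r$ around $\rho$), what remains is a quadratic fluctuation of $r$ around $\rho$ whose scale forces us to invoke the second-order Boltzmann--Gibbs principle: $\overline{r(\eta_x,\eta_{x+1})}$ minus its linear projection is replaced by $\tfrac12\Phi_r''(\rho)$ times the square of the empirical density in a box of growing size. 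This converts the anti-symmetric term into $-\Lambda(\rho)\mathcal A^{\varepsilon_n}_{0,t}(\varphi)$ with $\Lambda(\rho)$ as in \eqref{eq:sbe_nonlinear_coeff}.

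Next, one estimates the quadratic variation. A direct computation from the generator gives
\begin{equation*}
\langle\mathcal M^n(\varphi)\rangle_t
= \frac{1}{n}\int_0^t\sum_{x\in\mathbb Z}\Big\{ p_n r(\eta_x(s),\eta_{x+1}(s)) + q_n r(\eta_{x+1}(s),\eta_x(s))\Big\}\big(\nabla^n T^-_{v_ns}\varphi^n_x\big)^2 \, ds ,
\end{equation*}
and since $\nu_\rho$ is stationary with $E_{\nu_\rho}[r(\eta_0,\eta_1)+r(\eta_1,\eta_0)]=2\Phi_h'(\rho)/\ldots$, a law-of-large-numbers argument using the ellipticity \eqref{eq:jump_rate_ellipticity} shows convergence to $2\chi(\rho)D(\rho)\|\nabla\varphi\|_{L^2(\mathbb R)}^2\, t$; the cross term vanishes because $p_n+q_n=1$ and the difference is of order $n^{-1/2}$. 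Tightness of $\{\mathcal X^n_\cdot\}_n$, of the martingale part, and of the integrated drift is established in Section \ref{sec:pep_tightness} via Aldous--Rebolledo criteria combined with the Kipnis--Varadhan $\mathscr H^{-1}$ bound.

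Finally, in Section \ref{sec:pep_identification} we identify any limit point $u$ as a stationary energy solution in the sense of Definition \ref{def:energysol}. Stationarity \textbf{(S)} follows because $\nu_\rho$ is invariant (by Theorem \ref{thm:gradient_condition}) and the fluctuation field of i.i.d.\ variables converges to a white noise of variance $\chi(\rho)$. The energy condition \textbf{(EC)} is obtained from the second-order Boltzmann--Gibbs estimate together with a Cauchy control of $\mathcal A^\varepsilon-\mathcal A^\delta$; then Proposition \ref{prop:nonlinear} upgrades the approximants to a bona fide process $\mathcal A_t$. The forward and backward martingale problems in Definition \ref{def:energysol}~(2)--(3) are inherited from the corresponding martingales at the level of $\mathcal X^n_\cdot$ and its time reversal (the latter uses that under $\nu_\rho$ the reversed process has generator $L_n^*$, so the analysis is symmetric in structure). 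Uniqueness in law from \cite{gubinelli2018energy} then pins down the limit. The main technical obstacle will be the second-order Boltzmann--Gibbs principle that produces the precise prefactor $\Lambda(\rho)=\tfrac{\alpha}{2}\Phi_r''(\rho)$; this requires a multi-scale replacement argument together with sharp spectral-gap estimates for the symmetric PEP restricted to finite boxes, which in turn rely crucially on the product form of $\nu_\rho$ guaranteed by Theorem \ref{thm:gradient_condition}.
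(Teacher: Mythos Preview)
Your proposal is correct and follows essentially the same approach as the paper: Dynkin's martingale decomposition, the gradient condition for the symmetric part (yielding $D(\rho)\int_0^t\mathcal X^n_s(\Delta\varphi)\,ds$ via the first-order Boltzmann--Gibbs principle), the second-order Boltzmann--Gibbs principle for the anti-symmetric part (producing $\Lambda(\rho)\mathcal A_t$), convergence of the quadratic variation to $\Phi_r(\rho)t\|\nabla\varphi\|^2_{L^2(\mathbb R)}=2\chi(\rho)D(\rho)t\|\nabla\varphi\|^2_{L^2(\mathbb R)}$ via the Einstein relation, and identification of limit points through the stationary-energy-solution framework with the reversed process handling item~(3). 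The only minor methodological difference is in the tightness step: you invoke Aldous--Rebolledo, whereas the paper uses Mitoma's criterion to reduce to real-valued processes, then the Jacod--Shiryaev martingale convergence criterion for $\mathcal M^n$ and the Kolmogorov--Chentsov criterion for $\mathcal S^n$ and $\mathcal B^n$; either route works.
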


\begin{remark}
In particular, when the asymmetry is weaker than the critical case, namely when $\lim_{n\to\infty}\sqrt{n}(p_n-q_n)=0$, the limiting equation is given by the stochastic heat equation with additive noise, so that we can cover the linear fluctuation result, see \cite[Theorem 2.1]{gonccalves2012crossover} for instance.
\end{remark}

\begin{remark}
In Theorem \ref{thm:sbe_from_pep}, we assumed that the dynamics of our PEP is initialized exactly from the product invariant measure.
This condition, however, could be made milder as in the case of the zero-range process~\cite{gonccalves2015stochastic}.
Indeed, if the initial measure is assumed to be sufficiently close to the product-invariant one with some proper order, then the estimate propagates in the whole duration, and, roughly speaking, we can interpret that the process is stationary as if it is initialized from the invariant measure. 
This makes it possible to show the derivation result in a similar way, combined with the assumption on the initial measure. 
\end{remark}

\subsection{Relationship to the simple exclusion process and the zero-range process}
The decomposable gradient PEPs share many nice properties with the simple exclusion process (SEP) and the zero-range process (ZR-process), and so we expect most of the results shown for these two processes would be extended to decomposable gradient PEPs, such as scaling limits for symmetric/asymmetric/finite-range/long-range versions, large deviations and so on. The similarities and differences between these processes are summarized below.   

First, these three processes have product invariant measures for both symmetric and asymmetric versions, and satisfy the gradient condition. In particular, they satisfy a stronger gradient condition than the usual gradient condition since the symmetric current $W^S$ is given as 
\[
W^S_{x,y}=h(\eta_x)-h(\eta_y)
\]
for some function $h$ which depends only on $\eta_x$. This feature plays a particularly important role in extending the lattice of the underlying space to crystal lattices, percolations, fractals, etc.

For the hydrodynamic limits and equilibrium fluctuations for weakly asymmetric processes on $\mathbb Z$, they share the form of the equations. The hydrodynamic limits are
\begin{equation*}
\partial_t\hat\rho_t=\Delta \tilde{D}(\hat\rho_t)+ \nabla\tilde{v}(\hat\rho_t) 
\end{equation*}
for the asymmetry $p_n-q_n=n^{-1}$ and the equilibrium fluctuations are
\begin{equation*}
\partial_tu 
= D(\rho) \Delta u 
- \Lambda(\rho) \nabla u^2
+ \sqrt{ 2\chi(\rho) D(\rho)} \nabla \dot W
\end{equation*}
for the asymmetry $p_n-q_n=n^{-1/2}$ 
where 
\[
D(\rho)=\frac{d}{d\rho} \tilde{D}(\rho), \quad \Lambda(\rho) =\frac{1}{2}\frac{d^2}{d\rho^2}\tilde{v}(\rho), \quad \chi(\rho)=\mathrm{Var}_{\nu_{\rho}}[\eta_0]
\]
and the {second-order} Einstein relation {(introduced in \cite{gonccalves2015einstein})}
\[
2\chi(\rho)D(\rho)=\tilde{v}(\rho)
\]
holds, which is shown in Lemma \ref{lem:Einstein} for PEPs. 
(Note that in \cite{gonccalves2015einstein}, the authors use the normalization $p_n-q_n=2an^{-1/2}$ and so some constants appear to be different from those in this paper.)
%noting that $\alpha=2$ in their notation.  
 Due to these relations, the pair of functions for the density $(\tilde{D}(\rho), \tilde{v}(\rho))$ determines these limiting equations. 

The explicit form of $\tilde{D}(\rho)$ and $\tilde{v}(\rho)$ differs according to the processes.
For the SEP, 
\[
\tilde{D}(\rho)=\frac{1}{2}\rho, \quad \tilde{v}(\rho)=\rho(1-\rho)
\]
and for the ZR-process,
\[
\tilde{D}(\rho)=\frac{1}{2}\Phi_g(\rho), \quad \tilde{v}(\rho)=\Phi_g(\rho)
\]
for some $\Phi_g(\rho)$ which depends on the jump rate $g :\{0,1,\dots \} \to \mathbb R_{\ge 0}$. In particular, when $g(m)=m$, namely the process corresponds to independent random walks, we have
\[
\tilde{D}(\rho)=\frac{1}{2}\rho, \quad \tilde{v}(\rho)=\rho.
\]
Finally, for our decomposable gradient PEPs,
\[
\tilde{D}(\rho)=\frac{1}{2}c(\kappa)\Phi_c(\rho), \quad \tilde{v}(\rho)=\Phi_c(\rho)(c(\kappa)-\Phi_c(\rho))
\]
for some $\Phi_c(\rho)$ which depends on the function $c :\{0,1,\dots \kappa \} \to \mathbb R_{\ge 0}$. In particular, when $c(m)=m$, namely the process is SEP$(\kappa)$,
\[
\tilde{D}(\rho)=\frac{\kappa}{2}\rho, \quad \tilde{v}(\rho)=\rho(\kappa-\rho).
\]

As the end of this remark, we discuss how to interpolate the SEP and ZR-process with our decomposable gradient PEPs. Starting with the simplest case, to interpolate the SEP and the independent random walks, we consider a normalized SEP$(\kappa)$ given by the function $c(m)=\frac{m}{\sqrt{\kappa}}$. For this normalized SEP$(\kappa)$, we have
\[
\tilde{D}(\rho)=\frac{1}{2}\rho, \quad \tilde{v}(\rho)=\rho\left(1-\frac{\rho}{\kappa}\right).
\]
Hence, when $\kappa=1$, they are consistent with the SEP, and as $\kappa \to \infty$, they converge to the case of independent random walks. More generally, for a strictly increasing function $g :\{0,1,\dots \} \to \mathbb R_{\ge 0}$ with $g(0)=0$, we consider the PEP given by the function $c(m)=g(m)/\sqrt{g(\kappa)}$ for each $\kappa$. 
For this PEP, we have
\[
\tilde{D}(\rho)=\frac{1}{2}\Phi_{g,\kappa}(\rho), \quad \tilde{v}(\rho)=\Phi_{g,\kappa}(\rho)\left(1-\frac{\Phi_{g,\kappa}(\rho)}{g(\kappa)}\right).
\]
Here $\Phi_{g,\kappa}(\rho)$ is the expectation of $g$ under the measure $\nu_{\rho}$ constructed with $c=g|_{\{0,1,\dots, \kappa\}}$.
Hence, if $g(\kappa) \to \infty$ and $\Phi_{g,\kappa}(\rho) \to \Phi_g(\rho)$ as $\kappa \to \infty$, then these functions converge to their counterparts of the ZR-process. Although it is an interesting question, we will not pursue here for which function $g$ this convergence really holds other than $g(m)=m$. It would also be interesting to know whether convergence in some stronger sense can be established.

\section{Characterization of the Gradient Condition}
\label{sec:pep_gradient}
In this section, we give a proof of Theorem \ref{thm:gradient_condition}. 
First, we show that the gradient condition for our process is characterized by the condition \eqref{eq:gradient_condition}. 

\begin{proposition}
The process $\{\eta(t):t\in[0,T]\}$ whose jump rate is given by \eqref{eq:jump_rate_assumption} is of gradient type if, and only if, the condition \eqref{eq:gradient_condition} is satisfied. 
In this case, particularly, the symmetric part of the current can be written as $W^S_{x,x+1}=h(\eta_x)-h(\eta_y)$ with $h(\eta_x)=\frac{1}{2}c(\kappa)c(\eta_x)$. 
\end{proposition}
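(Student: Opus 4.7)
The plan is to work directly from the definition of the symmetric current
\[
W^S_{x,x+1}=\tfrac{1}{2}\bigl(r(\eta_x,\eta_{x+1})-r(\eta_{x+1},\eta_x)\bigr)=\tfrac{1}{2}\bigl(c(\eta_x)d(\kappa-\eta_{x+1})-c(\eta_{x+1})d(\kappa-\eta_x)\bigr),
\]
treating the two implications separately and then reading off the formula for $h$.

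For the sufficiency direction I would simply substitute the identity $d(\kappa-m)=c(\kappa)-c(m)$ into the right-hand side above. After expansion the cross term $c(\eta_x)c(\eta_{x+1})$ cancels, leaving $\tfrac{1}{2}c(\kappa)\bigl(c(\eta_x)-c(\eta_{x+1})\bigr)$, so the gradient condition holds with $h(k)=\tfrac{1}{2}c(\kappa)c(k)$. This also yields the explicit form promised in the statement.

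For the necessity direction, assume $W^S_{x,x+1}=h(\eta_x)-h(\eta_{x+1})$ for some $h:\{0,\dots,\kappa\}\to\mathbb{R}$. Since the $\eta_x,\eta_{x+1}$ at neighboring sites can independently take any value in $\{0,\dots,\kappa\}$ (and since $W^S$ depends only on $(\eta_x,\eta_{x+1})$), this means that for all $m,k\in\{0,\ldots,\kappa\}$,
\[
c(m)d(\kappa-k)-c(k)d(\kappa-m)=2h(m)-2h(k).
\]
Setting $k=0$ and using $c(0)=0$ gives $h(m)=h(0)+\tfrac{1}{2}c(m)d(\kappa)$, so $2(h(m)-h(k))=d(\kappa)(c(m)-c(k))$. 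Substituting back, the functional equation becomes
\[
c(m)\bigl(d(\kappa-k)-d(\kappa)\bigr)=c(k)\bigl(d(\kappa-m)-d(\kappa)\bigr),
\]
for all $m,k$. Fixing any $k\in\{1,\dots,\kappa\}$ with $c(k)>0$, the left-hand side divided by $c(m)$ (for $m\geq 1$) equals a constant in $m$; taking $m=\kappa$ and using $d(0)=0$ identifies this constant as $-d(\kappa)/c(\kappa)=-1$ thanks to the normalization $c(\kappa)=d(\kappa)$. This forces $d(\kappa-m)=c(\kappa)-c(m)$ for all $m\geq 1$, and the case $m=0$ follows from $c(0)=0$ and $d(\kappa)=c(\kappa)$.

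The only subtlety, which I expect to be the main (though still minor) obstacle, is handling the boundary values $m=0$ and $k=0$ correctly and invoking the normalization \eqref{eq:jump_rate_normalization} to fix the constant of proportionality; without that normalization the best one can conclude is $d(\kappa-m)=\lambda(c(\kappa)-c(m))$ for some $\lambda>0$, and it is the normalization that pins down $\lambda=1$.
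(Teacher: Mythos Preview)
Your proposal is correct and follows essentially the same approach as the paper: both directions proceed by direct substitution, and in the necessity direction both you and the paper first plug in the value $0$ to identify $h$ in terms of $c$ and $d(\kappa)$, then use the normalization $c(\kappa)=d(\kappa)$ to conclude. The only cosmetic difference is that after obtaining the functional equation the paper sets $\eta_y=\kappa$ directly to read off $d(\kappa-m)=c(\kappa)-c(m)$, whereas you pass through the constant-ratio observation before evaluating at $m=\kappa$; the two are equivalent.
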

\begin{proof}
For our process, note that the symmetric part of the current of particles between $x$ and $x+1$ satisfies 
\begin{equation*}
2W^S_{x,y}
= r(\eta_x,\eta_y) - r(\eta_y,\eta_x)
= c(\eta_x)d(\kappa-\eta_y)
- c(\eta_y) d(\kappa-\eta_x) 
\end{equation*}
where we set $y=x+1$. 
Hence, if the condition \eqref{eq:gradient_condition} holds, then 
\begin{equation*}
2W^S_{x,y}
= c(\eta_x) \big( c(\kappa)-c(\eta_y)\big)  
- c(\eta_y) \big( c(\kappa)-c(\eta_x)\big) 
=c(\eta_x)c(\kappa)-c(\eta_y)c(\kappa),
\end{equation*}
and thus the process is of gradient type. 
On the other hand, if the process is of gradient type, then there exists some function $h:\{0,\ldots,\kappa\}\to \mathbb R$ such that 
\begin{equation*}
W^S_{x,y}=h(\eta_x)-h(\eta_y)
\end{equation*}
by definition. 
%since $W^S_{x,y}$ is a function of $\eta_x$ and $\eta_y$. 
Moreover, note that any constant shift of $h$ does not change the current $W^S_{x,y}$. 
Therefore, without loss of generality we may assume $h(0)=0$. 
Then, setting $\eta_y=0$, we have 
\begin{equation*}
2W^S_{x,y} \big|_{\eta_y=0}{=r(\eta_x,0)-r(0,\eta_x)}
=c(\eta_x)d(\kappa)
=2h(\eta_x) 
\end{equation*}
by the fact that $c(0)=0$ and $h(0)=0$.
Consequently, 
\begin{equation*}
2W^S_{x,y}
= c(\eta_x)d(\kappa-\eta_y)
- c(\eta_y) d(\kappa-\eta_x)
= c(\eta_x)d(\kappa)
- c(\eta_y)d(\kappa) 
\end{equation*}
for any $\eta$.  
Setting $\eta_x =m$ and $\eta_y = \kappa$, we obtain 
\begin{equation*}
-c(\kappa)d(\kappa-m) = 
c(m)d(\kappa) - c(\kappa) d(\kappa). 
\end{equation*}
From the normalizing condition \eqref{eq:jump_rate_normalization}, the desired condition \eqref{eq:gradient_condition} is derived and this completes the proof. 
\end{proof}

\begin{proof}[Proof of Theorem \ref{thm:gradient_condition}]
By \cite[Proposition I.2.13]{liggett1985interacting}, the PEP is invariant under $\nu_{\rho}$ if, and only if,  
\[
E_{\nu_{\rho}}[L_nf]=0
\]
for any local function $f : \mathscr X \to \mathbb R$. Since we know that $E_{\nu_{\rho}}[S_nf]=0$ for any local function $f$, the condition in the last display is equivalent to 
\[
E_{\nu_{\rho}}[A_nf]
=\frac{n^2(p_n-q_n)}{2} E_{\nu_{\rho}}\bigg[\sum_{x\in\mathbb Z} 
\big( r(\eta_x,\eta_{x+1}) \nabla_{x,x+1}f
- r(\eta_{x+1},\eta_{x}) \nabla_{x+1,x}f \big)\bigg]=0
\]
for any local function $f$. Noting that $E_{\nu_{\rho}}[S_nf]=0$ and the relation \eqref{eq:reversible}, we have
\begin{align*}
& E_{\nu_{\rho}}\bigg[\sum_{x\in\mathbb Z} 
\big( r(\eta_x,\eta_{x+1}) \nabla_{x,x+1}f
- r(\eta_{x+1},\eta_{x}) \nabla_{x+1,x}f \big)\bigg]\\
&\quad= 2E_{\nu_{\rho}}\bigg[\sum_{x\in\mathbb Z} 
 r(\eta_x,\eta_{x+1}) \nabla_{x,x+1}f\bigg] \\
&\quad= -2 E_{\nu_{\rho}}\bigg[ \sum_{x\in\mathbb Z}\big( r(\eta_x,\eta_{x+1})-r(\eta_{x+1},\eta_x)\big)f\bigg] 
= -4 E_{\nu_{\rho}}\bigg[\sum_{x\in\mathbb Z}W^S_{x,x+1}f\bigg].
\end{align*}
%Hence, the PEP is invariant under %$\nu_{\rho}$ if, and only if, 
%\[
%\sum_{x\in\mathbb Z}W^S_{x,x+1} =0
%\]
%since the inner product with any local function $f$ is $0$. 
Hence, the PEP is invariant under $\nu_{\rho}$ if the PEP is gradient. 
Now, to prove the opposite direction, we apply the method used in the paper \cite{sasada2018green}. Let $(\phi_{i})_{i=0}^{\kappa}$ be an orthonormal basis of $L^2(\mu_{\rho})$ where $\mu_{\rho}$ is the one site marginal of $\nu_{\rho}$ with $\phi_{0}\equiv 1$. In other words, $\phi_{i} : \{0,1,\dots,\kappa\} \to \mathbb{R}$ are functions satisfying $E_{\nu_{\rho}}[\phi_{i}(\eta_x)\phi_{j}(\eta_x)]=\delta_{ij}$. Then, $(\phi_{ij})_{i,j=0}^{\kappa}$ forms an orthonormal basis of $L^2(\mu_{\rho}^2)$ where $\phi_{ij}$ is defined by $\phi_{ij}(\eta_1,\eta_2)=\phi_{i}(\eta_1)\phi_j(\eta_2)$ and $\mu_{\rho}^2$ is the two sites marginal of $\nu_{\rho}$. In particular, since $W^S_{0,1}$ is a mean zero function depending only on $\eta_0$ and $\eta_1$, we have that 
\begin{equation*}
W^S_{0,1}=\sum_{i=1}^{\kappa}a^{(0)}_{i} \phi_i(\eta_0)+\sum_{i=1}^{\kappa}a^{(1)}_{i} \phi_i(\eta_1) + \sum_{i,j=1}^{\kappa}b_{ij} \phi_{ij}(\eta_0,\eta_1)
\end{equation*}
for some coefficients $a^{(0)}_{i}, a^{(1)}_{i}$ and $b_{ij}$. Then, 
\[
E_{\nu_{\rho}}\bigg[\sum_{x\in\mathbb Z}W^S_{x,x+1} \phi_{ij}(\eta_0,\eta_1)\bigg]=0
\]
implies $b_{ij}=0$ for $1 \le i,j \le \kappa$ and
\[
E_{\nu_{\rho}}\bigg[\sum_{x\in\mathbb Z}W^S_{x,x+1} \phi_{i}(\eta_0)\bigg]=0
\]
implies $a^{(0)}_i+a^{(1)}_i=0$ for $1 \le i \le \kappa$.
This leads to
\begin{equation*}
W^S_{0,1}=\sum_{i=1}^{\kappa}a^{(0)}_{i} \left(\phi_i(\eta_0)- \phi_i(\eta_1)\right).
\end{equation*}
Hence, the PEP is of gradient type.
\end{proof}

Moreover, we have the following thermodynamic relation, which is referred to as the second-order Einstein relation as in the paper \cite{gonccalves2015einstein}, when the gradient condition \eqref{eq:gradient_condition} is satisfied. 

\begin{lemma}%[Green-Kubo formula]
\label{lem:Einstein}
Assume the gradient condition \eqref{eq:gradient_condition}. 
Recall that the diffusion coefficient $D(\rho)$ is defined by \eqref{eq:diffusion_coefficient}. 
Then 
\begin{equation}
\label{eq:gk_formula}
D(\rho)
= \frac{1}{2\chi(\rho)}\Phi_r(\rho). 
\end{equation}
In particular, $D(\rho)>0$ for any $\rho \in(0,\kappa)$. 
\end{lemma}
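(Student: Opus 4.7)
The plan is to reduce the claim $D(\rho) = \Phi_r(\rho)/(2\chi(\rho))$ to a single algebraic identity involving only $\Phi_c(\rho)$ and the fugacity $\lambda = \lambda(\rho)$, then verify that identity directly from the explicit form of the product measure $\nu_\rho$. Observe first that by \eqref{eq:diffusion_coefficient} and the gradient condition $d(\kappa-m)=c(\kappa)-c(m)$, one has $D(\rho)=\tfrac{1}{2}c(\kappa)\Phi_c'(\rho)$, and since $\nu_\rho$ is a product measure,
\begin{equation*}
\Phi_r(\rho)=E_{\nu_\rho}[c(\eta_0)d(\kappa-\eta_1)]=\Phi_c(\rho)\bigl(c(\kappa)-\Phi_c(\rho)\bigr).
\end{equation*}
Thus \eqref{eq:gk_formula} is equivalent to $c(\kappa)\chi(\rho)\Phi_c'(\rho)=\Phi_c(\rho)(c(\kappa)-\Phi_c(\rho))$.

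The central step is to prove the closed-form relation $\Phi_c(\rho)=c(\kappa)\lambda/(1+\lambda)$. For this I would exploit the one-site marginal recursion
\begin{equation*}
c(m+1)\,\bar\nu_\lambda(\eta_0=m+1)=\lambda\,d(\kappa-m)\,\bar\nu_\lambda(\eta_0=m),\qquad m=0,\dots,\kappa-1,
\end{equation*}
which follows at once from the explicit formula for $\bar\nu_\lambda$. Summing this identity over $m$ and using the boundary cancellations $c(0)=d(0)=0$ gives the general identity $\Phi_c(\rho)=\lambda\,E_{\nu_\rho}[d(\kappa-\eta_0)]$. Under the gradient condition the right-hand expectation collapses to $c(\kappa)-\Phi_c(\rho)$, and solving the resulting linear equation yields the claimed closed form.

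To finish the verification I would convert the $\rho$-derivative to a $\lambda$-derivative. By \eqref{eq:compressibility}, $\chi(\rho)=\lambda(d\lambda/d\rho)^{-1}$, so by the chain rule $\chi(\rho)\Phi_c'(\rho)=\lambda\,d\Phi_c/d\lambda$. Differentiating $\Phi_c=c(\kappa)\lambda/(1+\lambda)$ gives $\lambda\,d\Phi_c/d\lambda=c(\kappa)\lambda/(1+\lambda)^2$, and the right-hand side of the target identity evaluates to $c(\kappa)^2\lambda/(1+\lambda)^2$ after substituting the closed form for $\Phi_c$; multiplying the previous display by $c(\kappa)$ yields exactly the same quantity, establishing \eqref{eq:gk_formula}.

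Positivity of $D(\rho)$ on $(0,\kappa)$ then follows without extra work: $\chi(\rho)>0$ because $\nu_\rho$ is a non-degenerate distribution on $\{0,\dots,\kappa\}$, while for $\rho\in(0,\kappa)$ the fugacity $\lambda$ lies in $(0,\infty)$, forcing $\Phi_c(\rho)\in(0,c(\kappa))$ and hence $\Phi_r(\rho)>0$. The only delicate point I anticipate is the first step: one must carefully track the boundary terms $c(0)\mu_\rho(0)$ and $d(0)\mu_\rho(\kappa)$ when telescoping the recursion, as it is precisely their vanishing that turns a partial sum into the clean identity $\Phi_c=\lambda(c(\kappa)-\Phi_c)$ on which the rest of the argument rests.
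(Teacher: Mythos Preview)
Your proof is correct and follows essentially the same route as the paper: both derive the key identity $\Phi_c(\rho)=\lambda E_{\nu_\rho}[d(\kappa-\eta_0)]$ from the one-site marginal, invoke the gradient condition to obtain the closed form $\Phi_c(\rho)=c(\kappa)\lambda/(1+\lambda)$, and then conclude via \eqref{eq:compressibility} by converting the $\rho$-derivative into a $\lambda$-derivative. The only cosmetic difference is that you spell out the telescoping recursion explicitly, whereas the paper states the identity $E_{\nu_\rho}[c(\eta_0)]=\lambda(\rho)E_{\nu_\rho}[d(\kappa-\eta_0)]$ directly from the form of the measure.
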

%\ms{(I suggest the new version of the proof.)}
\begin{proof}%[\ms{new version}]
First, note that by definition of the measure $\nu_{\rho}$, we have 
\begin{equation*}
E_{\nu_\rho}[c(\eta_0)]
= \lambda(\rho) E_{\nu_\rho}[d(\kappa-\eta_0)]. 
\end{equation*}
Combined with the gradient condition \eqref{eq:gradient_condition}, we have that 
\begin{equation*}
\Phi_c(\rho)=\lambda(\rho)(c(\kappa)-\Phi_c(\rho)),
\end{equation*}
which leads 
\begin{equation}
\label{eq:Phi_c}
\Phi_c(\rho)
=c(\kappa)\frac{\lambda(\rho)}{1+\lambda(\rho)}.  
\end{equation}
As a result, since $r(\eta_0,\eta_1)=c(\eta_0)(c(\kappa)-c(\eta_1))$ under the gradient condition,
\begin{equation*}
%\label{eq:Phi_r}
\Phi_r(\rho)=\Phi_c(\rho)(c(\kappa)-\Phi_c(\rho))
= c(\kappa)^2\frac{\lambda(\rho) }{(1+ \lambda(\rho))^2 }. 
\end{equation*}
On the other hand, 
\begin{equation*}
D(\rho) =\frac{1}{2} c(\kappa) \frac{d}{d\rho}\Phi_c(\rho) = \frac{1}{2} c(\kappa)^2 \frac{d}{d\rho}\frac{\lambda(\rho)}{1+\lambda(\rho)} 
= \frac{1}{2} 
 \frac{c(\kappa)^2}{(1+\lambda(\rho))^2}  \frac{d\lambda(\rho)}{d\rho}
\end{equation*}
where in the second identity, we used the relation \eqref{eq:Phi_c}.
Finally, recalling \eqref{eq:compressibility}, we have 
\begin{equation*}
2\chi(\rho)D(\rho)= \Phi_r(\rho)
\end{equation*}
and the assertion follows. 
\end{proof}

\section{The Boltzmann-Gibbs Principle}
\label{sec:pep_estimate}
\subsection{Preliminaries}
In this section, we summarize some basic estimates which are crucial to show Theorem \ref{thm:sbe_from_pep}.
First, let us recall some basic estimates which are concerned with the so-called $\mathscr H^{1,n}$ and $\mathscr H^{-1,n}$-norm, see \cite[Chapter 2]{komorowski2012fluctuations} for some properties of these norms.  
For each local $L^2(\nu_\rho)$ function $f$, let us define $\| f \|_{1,n}$ by 
\begin{equation*}
\begin{aligned}
\| f\|^2_{1,n}
&= \langle f, -L_n f\rangle_{L^2(\nu_\rho)}
= \langle f, -S_n f\rangle_{L^2(\nu_\rho)} \\
&= \frac{n^2}{2} \sum_{x\in\mathbb Z} 
E_{\nu_\rho}\big[ r(\eta_x,\eta_{x+1}) \big(\nabla_{x,x+1}f(\eta))^2 \big] . 
\end{aligned}
\end{equation*}
Moreover, for $f \in L^2(\nu_{\rho})$, we define the norm $\|\cdot \|_{-1,n}$ by the following variational formula. 
\begin{equation*}
\|f\|^2_{-1,n}
= \sup_{g} \big\{ 2\langle f, g \rangle_{L^2(\nu_\rho)}
- \|g\|^2_{1,n} \big\} 
\end{equation*}
where the supremum is taken over all local 
functions $g$.
Additionally, here and in what follows, let us introduce a local average of $\eta_x$ in a box with the size $\ell \in \mathbb N$ as follows: 
\begin{equation}
\label{eq:definition_local_average}
\overrightarrow \eta^\ell_x 
= \frac{1}{\ell} \sum_{y=0}^{\ell-1} \eta_{x+y}.  
\end{equation}
First, note that we have the following estimate, see \cite[Lemma 2.4]{komorowski2012fluctuations} (where the setting accounts for general state spaces, but functions that are not time-dependent) and  \cite[Lemma 4.3]{chang2001equilibrium}  (where time-dependent functions are considered though the process has finite state-space) for a proof.

%\begin{proposition}[Kipnis-Varadhan inequality]
%\label{prop:KV}
%There exists a universal constant $C>0$ such that for any smooth mean-zero function $f:[0,T]\to L^2(\nu_\rho)$, 
%\begin{equation*}
%\mathbb E_n \bigg[ \sup_{0\le t\le T} \bigg|\int_0^t f(s,\eta(s)) ds \bigg|^2 \bigg]
%\le C \int_0^T \| f(t,\cdot)\|^2_{-1,n}dt .
%\end{equation*}
%Above, $\eta$ is the process generated by $L_n$ and we omit the %dependency on $n$ by abuse of notation. 
%\end{proposition}

\begin{proposition}[Kipnis-Varadhan inequality]
\label{prop:KV}
There exists a universal constant $C>0$ such that for any local function $f$ satisfying $E_{\nu_{\sigma}}[f]=0$ for any $\sigma \in (0,\kappa)$ and $\varphi\in\mathcal{S}(\mathbb R)$, 
\begin{equation*}
\mathbb E_n \bigg[ \sup_{0\le t\le T} \bigg|\int_0^t F_n(s,\eta(s)) ds \bigg|^2 \bigg]
\le C \int_0^T \| F_n(t,\cdot)\|^2_{-1,n}dt
\end{equation*}
where $F_n(s,\eta(s))=\sum_{x \in \mathbb{Z}}\tau_x f(\eta(s))T^-_{v_n s}\varphi(x/n)$.
Here, $\eta$ is the process generated by $L_n$ and we omit the dependency on $n$ by abuse of notation. 
\end{proposition}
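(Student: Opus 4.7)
The plan is to adapt the Kipnis--Varadhan inequality to the time-dependent setting, essentially following \cite[Lemma 2.4]{komorowski2012fluctuations} (for general state spaces with time-independent functions) and \cite[Lemma 4.3]{chang2001equilibrium} (for time-dependent functions). The key inputs are: (i) the variational characterization of $\|\cdot\|_{-1,n}$ stated just before the proposition; (ii) the stationarity of $\nu_\rho$ under $L_n$, which is guaranteed by Theorem \ref{thm:gradient_condition} under the standing gradient assumption; and (iii) the mean-zero condition $E_{\nu_\sigma}[f]=0$ for every $\sigma\in(0,\kappa)$, which ensures that $F_n(s,\cdot)$ has mean zero under $\nu_\rho$ uniformly in $s$, so that the $H^{-1,n}$ machinery applies.

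First, I would prove the fixed-time, time-independent version: for any mean-zero local $\tilde F$,
$$\mathbb E_n\Big[\sup_{0\le t\le T}\Big|\int_0^t \tilde F(\eta(s))\,ds\Big|^2\Big]\le CT\|\tilde F\|_{-1,n}^2.$$
For $\lambda>0$, introduce the resolvent $u_\lambda=(\lambda-S_n)^{-1}\tilde F$, which by the variational formula satisfies $\lambda\|u_\lambda\|_{L^2(\nu_\rho)}^2+\|u_\lambda\|_{1,n}^2\le \|\tilde F\|_{-1,n}^2$. Apply Itô's formula to $u_\lambda(\eta(t))$ and use $L_n=S_n+A_n$ together with $S_nu_\lambda=\lambda u_\lambda-\tilde F$ to rewrite
$$\int_0^t \tilde F(\eta(s))\,ds=u_\lambda(\eta(0))-u_\lambda(\eta(t))+M_t^\lambda+\int_0^t\bigl[\lambda u_\lambda+A_nu_\lambda\bigr](\eta(s))\,ds,$$
where $M^\lambda$ is the associated martingale. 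Each term is bounded: boundary terms by $\nu_\rho$-stationarity, the martingale by Doob's $L^2$ maximal inequality with $\mathbb E_n[\langle M^\lambda\rangle_T]\le CT\|u_\lambda\|_{1,n}^2$, the $\lambda u_\lambda$ integral by Cauchy--Schwarz, and the antisymmetric term via a sector condition. Sending $\lambda\downarrow 0$ yields the fixed-function bound.

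Next, to handle the time-dependence, I would discretize $[0,T]$ into $N$ subintervals $[s_k,s_{k+1}]$ of length $T/N$ and apply the fixed-function estimate on each with $\tilde F=F_n(s_k,\cdot)$, writing
$$\int_0^t F_n(s,\eta(s))\,ds=\sum_{k}\int_{s_k}^{s_{k+1}\wedge t}F_n(s_k,\eta(s))\,ds+\text{error},$$
where the error is controlled by the modulus of continuity of $s\mapsto F_n(s,\cdot)$ in $\|\cdot\|_{-1,n}$, which is available since $\varphi\in\mathcal S(\mathbb R)$ and the moving-frame translation $T^-_{v_ns}\varphi$ depends smoothly on $s$. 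Applying Doob's inequality to the running sum of martingales, then combining the triangle and Cauchy--Schwarz inequalities, yields
$$\mathbb E_n\Big[\sup_{0\le t\le T}\Big|\int_0^t F_n(s,\eta(s))\,ds\Big|^2\Big]\le C\sum_{k=0}^{N-1}\frac{T}{N}\|F_n(s_k,\cdot)\|_{-1,n}^2+o_N(1).$$
Passing to the limit $N\to\infty$ the Riemann sum converges to $C\int_0^T\|F_n(s,\cdot)\|_{-1,n}^2\,ds$, completing the proof.

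The main technical obstacle is the antisymmetric contribution $\int_0^t A_nu_\lambda(\eta(s))\,ds$, which is not directly controlled by the Dirichlet form. The point is that under the reversibility relation \eqref{eq:reversible}, the antisymmetric generator satisfies a \emph{sector condition} $|\langle g, A_nh\rangle_{L^2(\nu_\rho)}|\le C_0\|g\|_{1,n}\|h\|_{1,n}$ with $C_0$ independent of $n$ whenever $|p_n-q_n|$ is bounded, so this term is absorbed into a secondary resolvent decomposition (equivalently, $A_nu_\lambda$ is itself handled by introducing a second auxiliary resolvent and extracting another martingale). A secondary subtlety is the infinite volume of $\mathbb Z$, which requires justifying existence of the resolvent on the (non-closed) space of local functions; this is handled via the fact that $\tau_xf$ has uniformly bounded local support in $x$, combined with the spectral framework of \cite[Chapter 2]{komorowski2012fluctuations}.
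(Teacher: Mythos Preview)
The paper does not supply a proof of this proposition; it simply cites \cite[Lemma 2.4]{komorowski2012fluctuations} and \cite[Lemma 4.3]{chang2001equilibrium}, which are exactly the references you invoke. In that sense your proposal is consistent with the paper's treatment.

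That said, your handling of the antisymmetric term $\int_0^t A_nu_\lambda(\eta(s))\,ds$ via a sector condition diverges from the argument in the cited references. The standard proof in \cite{komorowski2012fluctuations} does \emph{not} use a sector condition: instead it takes $u_\lambda=(\lambda-L_n)^{-1}\tilde F$ (resolvent of the full generator, not $S_n$) and exploits time-reversal. Under stationarity the reversed process $\hat\eta(s)=\eta(T-s)$ is Markov with generator $L_n^*$, and by applying Dynkin's formula in both directions one writes the additive functional as an average of a forward martingale $M^\lambda$ and a backward martingale $\hat M^\lambda$, plus boundary terms. Both martingales have quadratic variation controlled purely by the symmetric Dirichlet form, so the bound by $\|\tilde F\|_{-1,n}^2$ follows directly and the antisymmetric contribution never appears as a separate term. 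Your sector-condition route is not wrong---the bound $|\langle g,A_nh\rangle_{L^2(\nu_\rho)}|\le C_0\|g\|_{1,n}\|h\|_{1,n}$ does hold here with $C_0$ bounded uniformly in $n$---but it introduces an unnecessary iteration (the ``secondary resolvent'' you mention) and a dependence on $C_0$ in the final constant, whereas the time-reversal argument gives the universal constant directly. The time-discretization step for the $s$-dependence of $F_n$ is fine and matches \cite{chang2001equilibrium}.
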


Then, we state the spectral gap estimate for the partial exclusion processes, see \cite[Section 3]{caputo2004spectral} for the proof.
%\kh{cite [sasada, Kac walk], also [caputo]?}
% \cite[Theorem 3.1]{caputo2004spectral} gives a uniform estimate, but here we need a local version as above. This is not proved in the paper, but it just says that the proof follows from the Kipnis Landim book. 

%\begin{definition}
%We say that a spectral gap estimate with intensity $\gamma>0$ holds if when there exists a uniform and finite constant $\gamma$ such that 
%\begin{equation*}
%\| f \|^2_{L^2(\nu_\rho)} \le \gamma \| f\|^2_{1}
%\le \frac{\gamma}{n^2} \| f\|^2_{1,n}\end{equation*}
%for any local function $f:\mathscr X \to \mathbb R$ such that $E_{\nu_\sigma}[f]=0$ for any $\sigma\in[0,\kappa]$. 
%\end{definition}

\begin{proposition}[Spectral gap estimate]
Let $f:\mathscr X\to \mathbb R$ be a local function satisfying $E_{\nu_\sigma}[f]=0$ for any $\sigma \in (0,\kappa)$ 
and $\mathrm{supp}(f)\subset \{ 0,\ldots,\ell_f-1\}$ for some $\ell_f\in\mathbb N$. 
There exists a universal and finite constant $\gamma>0$ such that  \begin{equation}
\label{eq:spectral_gap_estimate}
\| f\|^2_{L^2(\nu_\rho)} 
\le \gamma \ell_f^2 \sum_{x=0}^{\ell_f-2} 
E_{\nu_\rho}\big[ (\nabla_{x,x+1}f)^2\big]. 
\end{equation}
\end{proposition}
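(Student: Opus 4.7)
The plan is to deduce this Poincaré-type bound from the canonical spectral gap for the symmetric $\kappa$-exclusion dynamics restricted to the box $\Lambda_\ell := \{0,1,\ldots,\ell_f-1\}$. Write $\ell = \ell_f$ for brevity and set $N = \sum_{x \in \Lambda_\ell} \eta_x$; let $\nu_\rho^n$ denote $\nu_\rho$ conditioned on $\{N = n\}$. Since $f$ is supported in $\Lambda_\ell$ and $E_{\nu_\rho}[f] = 0$ by the centering hypothesis (taking $\sigma = \rho$), the standard variance decomposition gives
\begin{equation*}
\|f\|^2_{L^2(\nu_\rho)} = E_{\nu_\rho}\bigl[\mathrm{Var}_{\nu_\rho^N}(f)\bigr] + \mathrm{Var}_{\nu_\rho}\bigl(E_{\nu_\rho^N}[f]\bigr).
\end{equation*}

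The key observation is that the second term vanishes identically. Indeed, the marginals of $\nu_\rho$ are $\mu_\rho(m) \propto \lambda(\rho)^m / [c!(m) d!(\kappa-m)]$, so the conditional law $\nu_\rho^n$ on the simplex $\{|\eta| = n\}$ is proportional to $\prod_{x \in \Lambda_\ell} [c!(\eta_x) d!(\kappa-\eta_x)]^{-1}$, \emph{independently of} $\rho$. Writing $F_n := \sum_{\eta: |\eta|=n} f(\eta) / \prod_x [c!(\eta_x) d!(\kappa-\eta_x)]$, one checks that $E_{\nu_\sigma}[f]$ is a polynomial in $\lambda(\sigma)$ with coefficient $F_n$ up to a $\sigma$-independent factor, and that $E_{\nu_\rho^n}[f] = F_n / C_n$ for the appropriate normalizer $C_n$. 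Since $\lambda(\sigma)$ ranges over all of $(0,\infty)$ as $\sigma$ ranges over $(0,\kappa)$, the centering hypothesis forces $F_n = 0$ for every $n$, whence $E_{\nu_\rho^N}[f] = 0$ almost surely and the variance decomposition reduces to $\|f\|^2_{L^2(\nu_\rho)} = E_{\nu_\rho}[\mathrm{Var}_{\nu_\rho^N}(f)]$.

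It then suffices to invoke the canonical spectral gap for PEP established in \cite{caputo2004spectral}: there exists a universal constant $C$ such that
\begin{equation*}
\mathrm{Var}_{\nu_\rho^n}(f) \leq C \ell^2 \sum_{x=0}^{\ell-2} E_{\nu_\rho^n}\bigl[r(\eta_x,\eta_{x+1})(\nabla_{x,x+1}f)^2\bigr]
\end{equation*}
uniformly in $n$. Using the ellipticity \eqref{eq:jump_rate_ellipticity} to discard the weight $r(\eta_x,\eta_{x+1})$ at the cost of a factor $\varepsilon_0^{-1}$, and averaging over $N$ under $\nu_\rho$, I obtain the claim with $\gamma = C/\varepsilon_0$. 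The main obstacle lies entirely in the canonical spectral gap, whose proof in \cite{caputo2004spectral} proceeds by an induction on $\ell$ combined with a moving-particle lemma and a comparison to SEP; the delicate point there is the uniform control in the particle number $n$ of the state-dependent rate $r(\eta_x,\eta_{x+1})$, for which the decomposable product form \eqref{eq:jump_rate_assumption} is essential.
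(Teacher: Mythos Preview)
The paper does not prove this proposition; it defers entirely to \cite[Section~3]{caputo2004spectral}. Your argument is correct and simply makes explicit the standard reduction---variance decomposition, vanishing of the canonical expectations $E_{\nu_\rho^n}[f]$ via the polynomial-in-$\lambda$ argument, and ellipticity to strip the rate---from the grand-canonical statement here to Caputo's canonical spectral gap; the same polynomial argument in fact appears verbatim later in the paper, in the proof of the $\mathscr H^{-1,n}$-norm estimate (Proposition~\ref{prop:H-1_norm_estimate}).
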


Recall the definition of the $\| \cdot\|_{1,n}$-norm of the local function $f$.  By the ellipticity condition \eqref{eq:jump_rate_ellipticity}, we have the bound
\begin{equation}
\label{eq:estimate_dirichlet_form}
\sum_{x=0}^{\ell_f-2} E_{\nu_\rho}[(\nabla_{x,x+1}f)^2]
\le \frac{1}{\varepsilon_0} \sum_{x=0}^{\ell_f-2} E_{\nu_\rho}[r(\eta_x,\eta_{x+1})(\nabla_{x,x+1}f)^2]
\le \frac{2}{\varepsilon_0 n^2} \| f\|^2_{1,n}. 
\end{equation}
Therefore, the spectral gap estimate yields the bound 
\begin{equation*}
%\label{eq:spectral_gap_consequence}
\| f\|^2_{L^2(\nu_\rho)}
\le \frac{2\gamma\ell_f^2}{\varepsilon_0 n^2} \| f\|^2_{1,n}. 
\end{equation*}
Using this consequence of the spectral gap estimate, we have the following bound of the $\mathscr H^{-1,n}$-norm. 

\begin{proposition}
\label{prop:H-1_norm_estimate}
Let $f:\mathscr X \to \mathbb R$ be a local function such that $E_{\nu_\sigma}[f]=0$ for any $\sigma\in (0,\kappa)$ and $\mathrm{supp}(f)\subset \{ 0,\ldots,\ell_f-1\}$ for some $\ell_f\in\mathbb N$. 
Then, we have that 
\begin{equation*}
\|f\|^2_{-1,n} 
\le \frac{2\gamma\ell_f^2}{\varepsilon_0n^2} \| f\|^2_{L^2(\nu_\rho)}. 
\end{equation*}
\end{proposition}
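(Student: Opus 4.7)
The plan is to work directly from the variational definition
\begin{equation*}
\|f\|^2_{-1,n} = \sup_g\bigl\{2\langle f,g\rangle_{L^2(\nu_\rho)}-\|g\|^2_{1,n}\bigr\}
\end{equation*}
and, for each local test function $g$, to construct a companion function $h$ which is both mean-zero and supported in $\{0,\ldots,\ell_f-1\}$, so that the spectral gap estimate \eqref{eq:spectral_gap_estimate} applies directly. The final estimate will then follow from Cauchy--Schwarz and a Young-type inequality.

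Concretely, for any local $g$ I would set $h:=E_{\nu_\rho}[g\,|\,\eta_0,\ldots,\eta_{\ell_f-1}]-E_{\nu_\rho}[g]$. Since $f$ is mean-zero and measurable with respect to $\sigma(\eta_0,\ldots,\eta_{\ell_f-1})$, the identity $\langle f,g\rangle_{L^2(\nu_\rho)}=\langle f,h\rangle_{L^2(\nu_\rho)}$ holds, while $h$ itself is mean-zero and supported in $\{0,\ldots,\ell_f-1\}$. Applying the spectral gap estimate to $h$ yields
\begin{equation*}
\|h\|^2_{L^2(\nu_\rho)}\le \gamma\ell_f^2\sum_{x=0}^{\ell_f-2}E_{\nu_\rho}\bigl[(\nabla_{x,x+1}h)^2\bigr].
\end{equation*}

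For each internal bond $(x,x+1)$ with $0\le x\le \ell_f-2$, the exchange $\eta\mapsto\eta^{x,x+1}$ stays inside the window and, since $\nu_\rho$ is a product measure, one has the pointwise identity $\nabla_{x,x+1}h(\eta)=E_{\nu_\rho}[\nabla_{x,x+1}g\,|\,\eta_0,\ldots,\eta_{\ell_f-1}]$. Jensen's inequality for conditional expectations then gives $E_{\nu_\rho}[(\nabla_{x,x+1}h)^2]\le E_{\nu_\rho}[(\nabla_{x,x+1}g)^2]$ for each such $x$, and the ellipticity bound \eqref{eq:estimate_dirichlet_form} (which requires no restriction on the support of $g$, only the ellipticity condition \eqref{eq:jump_rate_ellipticity}) upgrades the sum over internal bonds to
\begin{equation*}
\|h\|^2_{L^2(\nu_\rho)}\le \frac{2\gamma\ell_f^2}{\varepsilon_0 n^2}\|g\|^2_{1,n}.
\end{equation*}

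Combining this with Cauchy--Schwarz gives $2\langle f,g\rangle_{L^2(\nu_\rho)}\le 2\|f\|_{L^2(\nu_\rho)}\|h\|_{L^2(\nu_\rho)}$, and the elementary inequality $2ab\le a^2+b^2$ applied with $a=(2\gamma\ell_f^2/(\varepsilon_0 n^2))^{1/2}\|f\|_{L^2(\nu_\rho)}$ and $b=\|g\|_{1,n}$ produces
\begin{equation*}
2\langle f,g\rangle_{L^2(\nu_\rho)}-\|g\|^2_{1,n}\le \frac{2\gamma\ell_f^2}{\varepsilon_0 n^2}\|f\|^2_{L^2(\nu_\rho)}
\end{equation*}
uniformly in $g$, and taking the supremum completes the proof. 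The only delicate point is the Jensen step: it works precisely because the spectral gap \eqref{eq:spectral_gap_estimate} sums only over bonds \emph{internal} to the window, for which the conditional-expectation representation of $\nabla h$ is valid; a boundary bond such as $(\ell_f-1,\ell_f)$ would not admit such a pointwise comparison, but fortunately it never appears in the estimate.
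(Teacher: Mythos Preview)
There is a genuine gap. The spectral gap estimate \eqref{eq:spectral_gap_estimate} as stated in the paper requires the function to satisfy $E_{\nu_\sigma}[\,\cdot\,]=0$ for \emph{every} $\sigma\in(0,\kappa)$ --- equivalently, to have zero mean on each canonical hyperplane $\{\overrightarrow\eta^{\ell_f}_0=K/\ell_f\}$. Your companion function $h=E_{\nu_\rho}[g\mid\eta_0,\ldots,\eta_{\ell_f-1}]-E_{\nu_\rho}[g]$ is only mean-zero with respect to the single grand-canonical measure $\nu_\rho$; in general $E_{\nu_\sigma}[h]\neq 0$ for $\sigma\neq\rho$, so \eqref{eq:spectral_gap_estimate} is not available for $h$. (A concrete obstruction: if $h$ is a non-trivial function of the particle number $\overrightarrow\eta^{\ell_f}_0$ alone, every $\nabla_{x,x+1}h$ vanishes while $\|h\|_{L^2}>0$.) Notice also that your argument never uses the full hypothesis $E_{\nu_\sigma}[f]=0$ for all $\sigma$, which is a warning sign.

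The paper's proof repairs exactly this point: it sets $\overline g_{\ell_f}=g_{\ell_f}-E_{\nu_\rho}[g_{\ell_f}\mid\overrightarrow\eta^{\ell_f}_0]$, which \emph{is} mean-zero on every canonical slice, so the spectral gap applies. The price is that one must check $\langle f,g\rangle_\rho=\langle f,\overline g_{\ell_f}\rangle_\rho$, and this is where the full assumption on $f$ enters: one shows $E_{\nu_\rho}[f\mid\overrightarrow\eta^{\ell_f}_0]=0$ from $E_{\nu_\sigma}[f]=0$ for all $\sigma$. With this correction the rest of your argument (the conditional-expectation identity for $\nabla_{x,x+1}$ on internal bonds, Jensen, ellipticity, and the Young inequality) goes through exactly as you wrote it, and matches the paper.
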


The proof of the previous estimate is given in \cite[Proposition 6]{gonccalves2014nonlinear} for exclusion processes with generic jump rates. 
Here we give a proof of Proposition \ref{prop:H-1_norm_estimate}, since the Dirichlet form of the process is different from those of exclusion processes, albeit the proof is quite similar. 
(See also the proof in \cite[Lemma 4.3]{gonccalves2015stochastic}.)

\begin{proof}[Proof of Proposition \ref{prop:H-1_norm_estimate}]
Let $g:\mathscr X\to \mathbb R$ be an arbitrary local function. 
For any $\ell\in\mathbb N$, let $\mathcal{F}_\ell$ be the $\sigma$-algebra generated by $\eta_0,\cdots,\eta_{\ell-1}$, and define $g_{\ell}=E_{\nu_\rho}[g|\mathcal{F}_{\ell}]$ and $\overline{g}_{\ell} = g_\ell- E_{\nu_\rho}[g_\ell|\overrightarrow\eta^\ell_0]$. 
%\ms{(Define $E_{\nu_\rho}[g_\ell|\overrightarrow\eta^\ell_0]$.)} 
Above, $E_{\nu_\rho}[g_\ell|\overrightarrow\eta^\ell_0]$ denotes the conditional expectation of the function $g_\ell$ on the $\sigma$-algebra generated by $\overrightarrow\eta^\ell_0$. 
%i.e. $\mathcal M_\ell:=\sigma(\overrightarrow\eta^\ell_0)$.  
First, note here that we can show that $E_{\nu_\rho}[f|\overrightarrow\eta^{\ell_f}_0]=0$.
\if0
%%%%%%%%%%%%%%%%%%%%%%%%%%%%%%%%%%%%%%
Indeed, by definition of the conditional expectation, we have that 
\begin{equation*}
\begin{aligned}
E_{\nu_\rho}[f|\overrightarrow\eta^{\ell_f}_0]
&= \sum_{K=0,\ldots,\kappa\ell_f}
E_{\nu_\rho}[f,\{\overrightarrow\eta^{\ell_f}_0=K/\ell_f\}]
\mathbf{1}_{\overrightarrow\eta^{\ell_f}_0=K/\ell_f} \\
&= \sum_{K=0,\ldots,\kappa\ell_f}
\frac{(\ell_f\lambda(\rho))^K}{Z_{\lambda(\rho)}} E_{\overline\nu_1}[f] 
\mathbf{1}_{\overrightarrow\eta^{\ell_f}_0=K/\ell_f}
=0
\end{aligned}\end{equation*} \ms{(I could not understand the last two equals.)}
since $E_{\nu_\sigma}[f]=0$ for any $\sigma\in (0,\kappa)$. 
\ms{(I suggest a proof for $E_{\nu_\rho}[f|\sigma(\sum_{x=0}^{\ell_f-1}\eta_x)]=0$ as follows.) 
%%%%%%%%%%%%%%%%%%%%%%%%%%%%%%%%%%%%%%%%%
\fi
For each $K=0,1,\ldots, \kappa\ell_f$, let $\nu_{\ell_f,K}$ be the probability measure on 
$$\mathscr X_{\ell_f,K}
\coloneqq \Big\{\eta \in \{0,1,\ldots, \kappa\}^{\{0,1,\ldots, \ell_f-1\}} : \sum_{x=0}^{\ell_f-1}\eta_x =K \Big\}
$$ 
defined as the restriction of $\nu_{\rho}$ on the set $\mathscr X_{\ell_f,K}$. 
It is easy to see that the measure is also the restriction of $\nu_{\sigma}$ on the set $\mathscr X_{\ell_f,K}$ for any $\sigma \in (0,\kappa)$. In particular, 
\[
E_{\nu_\sigma}[f]=\sum_{K=0}^{ \kappa\ell_f} \nu_\sigma( \mathscr X_{\ell_f,K})E_{\nu_{\ell_f,K}}[f]=0
\] 
for any $\sigma \in (0,\kappa)$. Hence, 
\[
\sum_{K=0}^{ \kappa\ell_f} \overline{\nu}_{\lambda}( \mathscr X_{\ell_f,K})E_{\nu_{\ell_f,K}}[f]= \sum_{K=0}^{ \kappa\ell_f}\frac{\lambda^K}{Z_{\lambda}^{\ell_f}}A_{\ell_f,K}E_{\nu_{\ell_f,K}}[f]=0
\]
for any $\lambda>0$ where $A_{\ell_f,K}$ is a positive number depending on $\ell_f, K$, but not on $\lambda$. Therefore, the polynomial $\sum_{K=0}^{ \kappa\ell_f}\lambda^K A_{\ell_f,K}E_{\nu_{\ell_f,K}}[f]$ in $\lambda$ is identically $0$ and since $A_{\ell_f,K} >0$, we conclude $E_{\nu_{\ell_f,K}}[f]=0$ for any $K$. Noting 
\[
E_{\nu_\rho}[f|\sigma(\sum_{x=0}^{\ell_f-1}\eta_x)] =\sum_{K=0}^{ \kappa\ell_f}E_{\nu_{\ell_f,K}}[f] \mathbf{1}_{\sum_{x=0}^{\ell_f-1}\eta_x =K}
\] 
we have $E_{\nu_\rho}[f|\sigma(\sum_{x=0}^{\ell_f-1}\eta_x)] =0$.

Then, note that by definition the relation $\langle f,g\rangle_{\rho}
=\langle f, \overline g_{\ell_f} \rangle_\rho$ is deduced straightforwardly where we used the short-hand notation $\langle \cdot,\cdot\rangle_\rho = \langle \cdot, \cdot\rangle_{L^2(\nu_\rho)}$. 
Moreover, note that 
\begin{equation}
\label{eq:dirichlet_form_estimate_g_ell}
\sum_{x=0}^{\ell_f-2} 
E_{\nu_\rho}\big[ (\nabla_{x,x+1}\overline g_{\ell_f})^2\big]
\le \sum_{x=0}^{\ell_f-2} 
E_{\nu_\rho}\big[ (\nabla_{x,x+1}g)^2\big]
\le \frac{2}{\varepsilon_0 n^2} \| g\|^2_{1,n}
\end{equation}
where we used Jensen's inequality in the first estimate and
the second estimate follows from \eqref{eq:estimate_dirichlet_form}. 
Hence, by Young's inequality, we have that 
\begin{equation*}
\begin{aligned}
2\langle f,g \rangle_\rho
&=2\langle f,\overline g_{\ell_f} \rangle_\rho
\le A \langle f,f \rangle_\rho
+ \frac{1}{A} \langle \overline g_{\ell_f},\overline g_{\ell_f} \rangle_\rho \\
&\le A \langle f,f \rangle_\rho
+ \frac{\gamma\ell_f^2}{A} 
\sum_{x=0}^{\ell_f-2} 
E_{\nu_\rho}\big[ (\nabla_{x,x+1}\overline g_{\ell_f})^2\big] \\
&\le A \langle f,f \rangle_\rho
+ \frac{2\gamma \ell_f^2}{\varepsilon_0 n^2A} 
\| g \|^2_{1,n}
\end{aligned}
\end{equation*}
for any $A>0$ where we used Proposition \ref{eq:spectral_gap_estimate} in the penultimate estimate noting that the local function $\overline{g}_{\ell_f}$ satisfies the assumption of the spectral gap estimate, and the last estimate is a consequence of the estimate \eqref{eq:dirichlet_form_estimate_g_ell}. 
Therefore, by choosing $A=2\gamma \ell_f^2 \varepsilon_0^{-1}n^{-2}$, the assertion follows by definition of the norm $\|\cdot \|_{-1,n}$. 
%we have that 
%\begin{equation*}
%2 \langle f,g \rangle_\rho- \| g \|^2_{1,n}
%\le \frac{2\gamma \ell_f^2}{\varepsilon_0} \| f\|^2_{L^2(\nu_\rho)}. 
%\end{equation*}
\end{proof}

Additionally, we have the following result on the $\mathscr H^{-1,n}$-norm of functions with disjoint supports, see \cite[Proposition 7]{gonccalves2014nonlinear} for the proof. 

\begin{proposition}
%\label{prop:H-1_orthogonality}
Let $m\in\mathbb N$ be given.
Take a sequence $k_0< \cdots < k_m$ in $\mathbb Z$ and let $f_1,\ldots, f_m:\mathscr X \to \mathbb R$ be a sequence of local functions such that $\mathrm{supp}(f_i)\subset \{ k_{i-1},\ldots, k_i-1\}$.
Define $\ell_{f_i} = k_i- k_{i-1}$. 
Assume that $E_{\nu_\sigma}[f_i]=0$ for any $\sigma\in (0,\kappa)$.
Then, 
\begin{equation*}
\| f_1 + \cdots f_m \|^2_{-1,n}
\le \sum_{i=1}^m \frac{2\gamma\ell_{f_i}^2}{\varepsilon_0n^2} \| f_i\|^2_{L^2(\nu_\rho)}. 
\end{equation*}
\end{proposition}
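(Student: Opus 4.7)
The plan is to adapt the proof of Proposition~\ref{prop:H-1_norm_estimate} above, exploiting the disjointness of supports to retain the single summand $\|g\|_{1,n}^2$ on the right-hand side of the variational formula. Fix an arbitrary local function $g:\mathscr X\to\mathbb R$. For each $i\in\{1,\dots,m\}$, let $\mathcal F^{(i)}$ denote the $\sigma$-algebra generated by $\{\eta_x:k_{i-1}\le x\le k_i-1\}$ and set $g^{(i)}=E_{\nu_\rho}[g\,|\,\mathcal F^{(i)}]$ and $\overline g^{(i)}=g^{(i)}-E_{\nu_\rho}[g^{(i)}\,|\,\sum_{x=k_{i-1}}^{k_i-1}\eta_x]$. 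Since $f_i$ is $\mathcal F^{(i)}$-measurable and vanishes on every canonical hyperplane of fixed total mass on the support (by the same polynomial-in-$\lambda$ argument given in the proof of Proposition~\ref{prop:H-1_norm_estimate}), we get the identity
\[
\langle f_i,g\rangle_{L^2(\nu_\rho)}
=\langle f_i,\overline g^{(i)}\rangle_{L^2(\nu_\rho)} .
\]

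Next, I would apply Young's inequality with weight $A_i>0$ to each inner product separately and then sum over $i$:
\[
2\langle f_1+\cdots+f_m,g\rangle_{L^2(\nu_\rho)}
\le \sum_{i=1}^m A_i\|f_i\|_{L^2(\nu_\rho)}^2
+\sum_{i=1}^m\frac{1}{A_i}\|\overline g^{(i)}\|_{L^2(\nu_\rho)}^2 .
\]
Each function $\overline g^{(i)}$ has support in $\{k_{i-1},\dots,k_i-1\}$ and mean zero under the canonical measure, so the spectral gap estimate \eqref{eq:spectral_gap_estimate} combined with Jensen's inequality (to replace $\nabla_{x,x+1}\overline g^{(i)}$ by $\nabla_{x,x+1}g$) gives
\[
\|\overline g^{(i)}\|_{L^2(\nu_\rho)}^2
\le \gamma\ell_{f_i}^2\sum_{x=k_{i-1}}^{k_i-2}E_{\nu_\rho}\big[(\nabla_{x,x+1}g)^2\big] .
\]

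The crucial point, and the one where disjointness of the supports enters, is that the index sets $\{k_{i-1},\dots,k_i-2\}$ for $i=1,\dots,m$ are pairwise disjoint subsets of $\mathbb Z$. Choosing $A_i=2\gamma\ell_{f_i}^2/(\varepsilon_0 n^2)$, the bound rewrites as
\[
\sum_{i=1}^m\frac{1}{A_i}\|\overline g^{(i)}\|_{L^2(\nu_\rho)}^2
\le \frac{\varepsilon_0 n^2}{2}\sum_{i=1}^m\sum_{x=k_{i-1}}^{k_i-2}E_{\nu_\rho}\big[(\nabla_{x,x+1}g)^2\big]
\le \frac{\varepsilon_0 n^2}{2}\sum_{x\in\mathbb Z}E_{\nu_\rho}\big[(\nabla_{x,x+1}g)^2\big]
\le \|g\|_{1,n}^2 ,
\]
where the last step uses the ellipticity bound \eqref{eq:jump_rate_ellipticity} exactly as in \eqref{eq:estimate_dirichlet_form}. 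Putting everything together,
\[
2\langle f_1+\cdots+f_m,g\rangle_{L^2(\nu_\rho)}-\|g\|_{1,n}^2
\le \sum_{i=1}^m\frac{2\gamma\ell_{f_i}^2}{\varepsilon_0 n^2}\|f_i\|_{L^2(\nu_\rho)}^2 ,
\]
and taking the supremum over local functions $g$ yields the assertion.

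The main obstacle, which is really the only nontrivial step, is arranging for the weights $A_i$ (which must in general depend on $i$ because $\ell_{f_i}$ does) so that after summing the Jensen/spectral-gap bounds one recovers a single copy of $\|g\|_{1,n}^2$ rather than $m$ copies of it. The disjointness hypothesis on the supports is exactly what makes this possible: it lets the sums over the local blocks fit inside the global sum defining $\|g\|_{1,n}^2$. Everything else is a direct reprise of the argument proving Proposition~\ref{prop:H-1_norm_estimate}.
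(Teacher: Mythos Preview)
Your proof is correct and follows essentially the same approach as the paper. The paper does not give its own proof of this finite-$m$ statement (it refers to \cite[Proposition~7]{gonccalves2014nonlinear}), but it does prove the infinite-sum analogue (Proposition~\ref{prop:H-1_orthogonality}) by the same mechanism you use: condition $g$ on each block, center with respect to the block sum, apply the spectral gap and Jensen, and exploit disjointness so that the block Dirichlet forms fit into a single $\|g\|_{1,n}^2$. The only cosmetic difference is that the paper first bounds $\|g\|_{1,n}^2\ge \tfrac{n^2\varepsilon_0}{2\gamma}\sum_i \ell_i^{-2}E_{\nu_\rho}[\tilde g_i^2]$ and then completes the square term by term, whereas you reach the same endpoint via Young's inequality with the weights $A_i=2\gamma\ell_{f_i}^2/(\varepsilon_0 n^2)$; these are equivalent manipulations.
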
 

Note here that we can easily extend the estimate in Proposition \ref{prop:H-1_orthogonality} to a sequence of infinitely many functions as follows.

\begin{proposition}
\label{prop:H-1_orthogonality}
Let $f_i:\mathscr X \to \mathbb R, i=1,2,\ldots$ be a sequence of local functions such that $\mathrm{supp}(f_i) \subset \Lambda_i$ where $\Lambda_i=\{a_i, a_i+1,\dots,b_i\}$ for some $a_i < b_i \in \mathbb Z$ and $\Lambda_i \cap \Lambda_j = \emptyset$ for any $i \neq j$. 
Define $\ell_{i} =|b_i-a_i|+1$. 
Assume that $E_{\nu_\sigma}[f_i]=0$ for any $\sigma \in (0,\kappa)$ and $\sum_{i=1}^{\infty}E_{\nu_\rho}[f_i^2] <\infty$. 
Then, 
\begin{equation*}
\Big\| \sum_{i=1}^{\infty} f_i \Big\|^2_{-1,n}
\le \sum_{i=1}^{\infty} \frac{2\gamma\ell_{i}^2}{\varepsilon_0n^2} \| f_i\|^2_{L^2(\nu_\rho)}. 
\end{equation*}
\end{proposition}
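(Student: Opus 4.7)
The plan is to reduce to the finite-support case established in the preceding proposition by a truncation argument that exploits the locality of the test functions appearing in the variational definition of $\|\cdot\|_{-1,n}$.

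First, I would verify that $F \coloneqq \sum_{i=1}^\infty f_i$ is a well-defined element of $L^2(\nu_\rho)$. Since $\nu_\rho$ is a product measure and the $\Lambda_i$ are pairwise disjoint, the $\{f_i(\eta)\}_i$ are jointly independent random variables; combined with the centering $E_{\nu_\rho}[f_i]=0$, this yields $\langle f_i, f_j\rangle_{L^2(\nu_\rho)}=0$ for $i\neq j$. Hence the partial sums $F_N=\sum_{i=1}^N f_i$ satisfy $\|F_N\|_{L^2(\nu_\rho)}^2=\sum_{i=1}^N \|f_i\|_{L^2(\nu_\rho)}^2$ and are Cauchy in $L^2(\nu_\rho)$ by the summability assumption; so $F_N\to F$ in $L^2(\nu_\rho)$ and $\|F\|_{-1,n}$ is a legitimate object.

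Next, fix an arbitrary local function $g$. By disjointness of the intervals $\Lambda_i$, only finitely many of them, say those with index in a finite set $I_g$, intersect the support of $g$. For every $i\notin I_g$, the functions $f_i$ and $g$ depend on disjoint sets of coordinates, so independence under the product measure $\nu_\rho$ together with $E_{\nu_\rho}[f_i]=0$ gives $\langle f_i,g\rangle_{L^2(\nu_\rho)}=0$. Combined with the $L^2(\nu_\rho)$-convergence $F_N\to F$, this yields
\[
\langle F,g\rangle_{L^2(\nu_\rho)} = \Big\langle \sum_{i\in I_g} f_i,g\Big\rangle_{L^2(\nu_\rho)}.
\]
I would then apply the finite-family version of the proposition to $\{f_i\}_{i\in I_g}$. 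After reordering the $\Lambda_i$ according to their positions on $\mathbb Z$ and inserting zero functions on any gaps between consecutive $\Lambda_i$'s (these contribute nothing since their $L^2$-norms vanish), the data fit the partition hypothesis $k_0<k_1<\cdots<k_m$ of the finite proposition. This produces the bound
\[
2\langle F,g\rangle_{L^2(\nu_\rho)}-\|g\|_{1,n}^2 \le \Big\|\sum_{i\in I_g} f_i\Big\|_{-1,n}^2 \le \sum_{i\in I_g} \frac{2\gamma\ell_i^2}{\varepsilon_0 n^2}\|f_i\|_{L^2(\nu_\rho)}^2 \le \sum_{i=1}^\infty \frac{2\gamma\ell_i^2}{\varepsilon_0 n^2}\|f_i\|_{L^2(\nu_\rho)}^2,
\]
and taking the supremum over local $g$ gives the assertion.

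The only mildly delicate point is the reindexing in the last step, namely verifying that inserting zero functions on the gaps between the $\Lambda_i$ is admissible. This is the main ``obstacle'' but it is entirely bookkeeping and leaves the bound unchanged since zero summands contribute nothing to the right-hand side; all other steps are soft consequences of the product structure of $\nu_\rho$ and the centering hypothesis.
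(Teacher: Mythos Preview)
Your argument is correct and, in fact, more economical than the paper's. You reduce directly to the finite-family proposition already stated just before this one: since any local test function $g$ pairs nontrivially with only finitely many $f_i$, you bound $2\langle F,g\rangle-\|g\|_{1,n}^2$ by $\|\sum_{i\in I_g}f_i\|_{-1,n}^2$, apply the finite result (padding the gaps with zero functions, which contribute nothing), and then take the supremum over $g$. The bookkeeping you flag is indeed harmless.

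The paper takes a different route: rather than invoking the finite-family proposition, it reproves its content directly in the infinite setting. For each local $g$ it conditions on each block $\Lambda_i$ to form $g_i=E_{\nu_\rho}[g\mid\mathcal F_i]$, subtracts the conditional expectation given the total particle number in $\Lambda_i$ to get $\tilde g_i$, applies the spectral gap estimate on each block to $\tilde g_i$, uses Jensen to control $E_{\nu_\rho}[(\nabla_{x,x+1}g_i)^2]$ by $E_{\nu_\rho}[(\nabla_{x,x+1}g)^2]$, and finally applies Young's inequality block by block. This is more self-contained but essentially duplicates the machinery behind the finite case; your reduction buys the same bound with no new analytic input, at the cost of citing the finite proposition as a black box.
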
 

\begin{proof}
First note that the infinite series $\sum_{i=1}^{\infty} f_i $ is in $L^2(\nu_\rho)$ since $f_i(\eta)$ and $f_j(\eta)$ are independent under $\nu_{\rho}$, and so the $\mathscr H^{-1,n}$ is well-defined. By definition,
\begin{equation*}
\|\sum_{i=1}^{\infty} f_i \|^2_{-1,n}
= \sup_{g} \big\{ 2\langle \sum_{i=1}^{\infty} f_i , g \rangle_{L^2(\nu_\rho)}
- \|g\|^2_{1,n} \big\} = \sup_{g} \big\{ 2 \sum_{i=1}^{\infty} \langle f_i , g \rangle_{L^2(\nu_\rho)}
- \|g\|^2_{1,n} \big\}
\end{equation*}
where the supremum is taken over all local functions $g$.
Let $\mathcal{F}_i$ be the $\sigma$-algebra generated by $\eta_{a_i},\eta_{a_{i}+1},\ldots, \eta_{b_i}$. Then, 
\[
\langle f_i , g \rangle_{L^2(\nu_\rho)}= \langle f_i , g_i \rangle_{L^2(\nu_\rho)} =  \langle f_i , \tilde{g_i} \rangle_{L^2(\nu_\rho)}
\]
where $g_i:=E_{\nu_{\rho}}[g| \mathcal{F}_i]$ and $\tilde{g_i}:=g_i-E_{\nu_{\rho}}[g_i | \sigma( \sum_{x \in \Lambda_i} \eta_x)]$. Then, by the spectral gap estimate,
\[
E_{\nu_\rho}[ \tilde{g_i}^2] \le \gamma \ell_{i}^2 \sum_{x=a_i}^{b_i-1} E_{\nu_\rho}[(\nabla_{x,x+1} \tilde{g_i})^2] =  \gamma \ell_{i}^2 \sum_{x=a_i}^{b_i-1} E_{\nu_\rho}[(\nabla_{x,x+1} g_i)^2]. 
\]
Since $\nabla_{x,x+1}g_i= E_{\nu_\rho}[\nabla_{x,x+1}g | \mathcal{F}_i]$ when $x,x+1 \in \Lambda_i$, by Jensen's inequality 
\[
E_{\nu_\rho}[ \tilde{g_i}^2] \le   \gamma \ell_{i}^2 \sum_{x=a_i}^{b_i-1} E_{\nu_\rho}[(\nabla_{x,x+1} g)^2]
\]
holds. Hence, for any local function $g$,
\begin{align*}
\|g\|^2_{1,n}  & = \frac{n^2}{2} \sum_{x\in\mathbb Z} 
E_{\nu_\rho}\big[ r(\eta_x,\eta_{x+1}) \big(\nabla_{x,x+1}g)^2 \big]   \ge  \frac{n^2}{2} \sum_{i=1}^{\infty}\sum_{x=a_i}^{b_i-1} 
E_{\nu_\rho}\big[ r(\eta_x,\eta_{x+1}) \big(\nabla_{x,x+1}g)^2 \big]  \\
& \ge   \frac{n^2 \varepsilon_0}{2} \sum_{i=1}^{\infty}\sum_{x=a_i}^{b_i-1} 
E_{\nu_\rho}\big[ \big(\nabla_{x,x+1}g)^2 \big]  \ge \frac{n^2 \varepsilon_0}{2 \gamma} \sum_{i=1}^{\infty} \frac{1}{\ell_i^2}
E_{\nu_\rho}\big[  \tilde{g_i}^2 \big].
\end{align*}
For a given local function $g$, $\langle f_i , g \rangle_{L^2(\nu_\rho)} = 0$ except for finitely many $i$. Let $\mathcal{I}_g \subset \mathbb N$ be the set of $i$ such that $\langle f_i , g \rangle_{L^2(\nu_\rho)} \neq 0$.  Combining the above estimates,
\begin{align*}
 & 2 \sum_{i=1}^{\infty} \langle f_i , g \rangle_{L^2(\nu_\rho)}
- \|g\|^2_{1,n}  = 2 \sum_{i \in \mathcal{I}_g} \langle f_i , \tilde{g_i} \rangle_{L^2(\nu_\rho)} -  \|g\|^2_{1,n}  \\
& \le 2 \sum_{i \in \mathcal{I}_g} \langle f_i , \tilde{g_i} \rangle_{L^2(\nu_\rho)} - \frac{n^2 \varepsilon_0}{2 \gamma} \sum_{i=1}^{\infty} \frac{1}{\ell_i^2}
E_{\nu_\rho}\big[  \tilde{g_i}^2 \big] \le 2 \sum_{i \in \mathcal{I}_g} \langle f_i , \tilde{g_i} \rangle_{L^2(\nu_\rho)} - \frac{n^2 \varepsilon_0}{2 \gamma} \sum_{i \in  \mathcal{I}_g} \frac{1}{\ell_i^2}  E_{\nu_\rho}\big[  \tilde{g_i}^2 \big] \\
& =  \sum_{i \in \mathcal{I}_g} \big( 2  \langle f_i , \tilde{g_i} \rangle_{L^2(\nu_\rho)} -  \frac{n^2 \varepsilon_0}{2 \gamma \ell_i^2} 
E_{\nu_\rho}\big[  \tilde{g_i}^2 \big] \big) \le  \sum_{i \in \mathcal{I}_g} \frac{2 \gamma \ell_i^2}{n^2 \varepsilon_0} E_{\nu_\rho}\big[ f_i^2 \big] \le  \sum_{i=1}^{\infty} \frac{2 \gamma \ell_i^2}{n^2 \varepsilon_0}  \| f_i\|^2_{L^2(\nu_\rho)}. 
\end{align*}
\end{proof}

Finally, we state the so-called \textit{equivalence of ensembles}, which enables us to approximate a local function in terms of local averages.
Recall the definition of the local average \eqref{eq:definition_local_average}. 
Then, we have the following result, see \cite[Proposition 3.1]{gonccalves2013scaling} for the proof.

\begin{proposition}[Equivalence of Ensembles]
Let $f:\mathscr X\to \mathbb R$ be a local function {satisfying $\mathrm{supp}(f)\subset \{ 0,\ldots,\ell_f-1\}$ for some $\ell_f\in\mathbb N$.}
Then, there exists a constant $C=C(\rho,f)>0$ such that 
\if0
\begin{equation*}
\sup_{\eta\in\mathscr X} 
\bigg| E_{\nu_\rho}[f|\overrightarrow \eta^\ell_0]
- E_{\nu_{\overrightarrow \eta^\ell_0}}[f] 
- \frac{2}{\ell} \chi(\eta^\ell_0) 
\frac{d^2}{d\sigma^2} E_{\nu_\sigma}[f]\Big|_{\sigma=\overrightarrow\eta^\ell_0} 
%\varphi_f''(\eta^\ell_{x_0}) 
\bigg| 
\le C \ell^{-2}
\end{equation*}
\fi
%\ms{(How about to rewrite in the following way? I prefer this style also in the rest of the paper.) 
\begin{equation*}
\sup_{\eta\in\mathscr X} 
\bigg| E_{\nu_\rho}[f|\overrightarrow \eta^\ell_0]
- \Phi_f( \overrightarrow \eta^\ell_0)
- \frac{2}{\ell} \chi(\overrightarrow\eta^\ell_0) 
 \Phi_f''(\overrightarrow\eta^\ell_0) 
\bigg| 
\le C \ell^{-2}
\end{equation*}
where $$\Phi_f''(\rho)=\frac{d^2}{d\rho^2} E_{\nu_\rho}[f]$$
for any $\ell \ge \ell_f$. 
%\textcolor{magenta}{I also prefer this notation, and thanks for correcting the upper arrow in $\chi$.}
%where recall that $\chi(\rho)=\mathrm{Var}_{\nu_\rho}[\eta_0]$ denotes the static compressibility.  
\end{proposition}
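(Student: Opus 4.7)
The plan is to reduce the conditional expectation to a ratio under the grand canonical measure and then apply a local central limit theorem with an Edgeworth-type correction. Fix $\eta$ with $\overrightarrow{\eta}^{\ell}_0 = K/\ell$, set $\sigma = K/\ell$, and recall the observation already used for the $\mathscr{H}^{-1,n}$-estimate: the canonical measure $\nu_{\ell,K}$ coincides with the restriction of $\nu_{\sigma}$ to $\mathscr{X}_{\ell,K} = \{\zeta : \sum_{x=0}^{\ell-1}\zeta_x = K\}$. Hence
\begin{equation*}
E_{\nu_\rho}[f \mid \overrightarrow{\eta}^{\ell}_0 = K/\ell]
= E_{\nu_\sigma}[f \mid S_\ell = K],
\qquad S_\ell := \sum_{x=0}^{\ell-1}\eta_x.
\end{equation*}

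Decomposing with respect to $\mathrm{supp}(f) \subset \{0,\ldots,\ell_f-1\}$, write $S_f = \sum_{x=0}^{\ell_f-1}\eta_x$ and $S' = \sum_{x=\ell_f}^{\ell-1}\eta_x$, which are independent under $\nu_\sigma$. Then
\begin{equation*}
E_{\nu_\sigma}[f \mid S_\ell = K]
= E_{\nu_\sigma}\!\left[f \cdot \frac{\nu_\sigma(S' = K - S_f)}{\nu_\sigma(S_\ell = K)}\right].
\end{equation*}
I would then apply a sharp (Edgeworth-corrected) local CLT for the bounded integer-valued i.i.d. variables $\eta_x$ under $\nu_\sigma$, whose marginal has mean $\sigma$ and variance $\chi(\sigma)$. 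The denominator sits at the mean, so it equals $(2\pi\ell\chi(\sigma))^{-1/2}(1+O(\ell^{-1}))$. For the numerator, $S' = K - S_f$ corresponds to a deviation $S_f - \ell_f\sigma$ from the mean of $S'$, which is bounded uniformly in $\eta$ since $f$ is local and $\eta_x \le \kappa$. The Edgeworth expansion gives
\begin{equation*}
\nu_\sigma(S' = K - S_f)
= \frac{1}{\sqrt{2\pi(\ell-\ell_f)\chi(\sigma)}}
\exp\!\left(-\frac{(S_f - \ell_f\sigma)^2}{2(\ell-\ell_f)\chi(\sigma)}\right)
\bigl(1 + O(\ell^{-1})\bigr),
\end{equation*}
with explicit next-order polynomial correction controlled uniformly because $S_f-\ell_f\sigma$ is bounded.

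Taking the ratio, expanding the $\sqrt{\ell-\ell_f}/\sqrt{\ell}$ prefactor and the Gaussian exponential to order $\ell^{-1}$, I obtain uniformly in $\eta$
\begin{equation*}
\frac{\nu_\sigma(S' = K - S_f)}{\nu_\sigma(S_\ell = K)}
= 1 + \frac{A(S_f;\sigma)}{\ell} + O(\ell^{-2}),
\end{equation*}
where $A(\cdot;\sigma)$ is an explicit polynomial of degree $\le 2$ in $S_f - \ell_f\sigma$ whose coefficients are smooth in $\sigma$. Plugging in and taking expectation under $\nu_\sigma$, the zeroth-order term yields $\Phi_f(\sigma)$, the first-order piece in $\ell^{-1}$ reduces, by independence of $(\eta_0,\ldots,\eta_{\ell_f-1})$ under $\nu_\sigma$ and the exponential-family structure, to a combination of covariances of the form $E_{\nu_\sigma}[f\,(S_f - \ell_f\sigma)]$ and $E_{\nu_\sigma}[f\,(S_f - \ell_f\sigma)^2]$. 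The key identification is that differentiating $\Phi_f(\sigma) = E_{\nu_\sigma}[f]$ twice in $\sigma$ and using the relation $\chi(\sigma) = \lambda(\sigma)(d\lambda/d\rho)^{-1}$ from \eqref{eq:compressibility} rewrites these cumulant combinations in terms of $\chi(\sigma)\Phi_f''(\sigma)$, producing the announced correction.

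The main obstacle is the constant-matching at order $\ell^{-1}$, together with a uniform-in-$\eta$ control of the $O(\ell^{-2})$ remainder. Uniformity is not severe because $|S_f - \ell_f\sigma| \le \kappa\ell_f$ is deterministic, so the Edgeworth tail error never blows up; the real care is in (i) establishing the local CLT with quantitative remainder for the lattice-valued bounded variables $\eta_x$ under $\nu_\sigma$ (where aperiodicity of the walk is immediate as $\eta_x$ takes values in $\{0,\ldots,\kappa\}$ with $\kappa\ge 1$ and $\nu_\sigma$ charges every integer in that range), and (ii) performing the algebraic identification between the Edgeworth coefficients and $\chi(\sigma)\Phi_f''(\sigma)$ using that $\nu_\sigma$ belongs to the one-parameter exponential family with chemical potential $\log\lambda(\sigma)$. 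This last step is the analytic heart of the argument; the rest is bookkeeping already carried out in \cite{gonccalves2013scaling}.
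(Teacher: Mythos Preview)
The paper does not give its own proof of this proposition; it simply cites \cite[Proposition 3.1]{gonccalves2013scaling}. Your outline---tilting to the grand canonical measure at density $\sigma=\overrightarrow\eta^\ell_0$, writing the conditional expectation as a ratio of point probabilities, and expanding via a local CLT with Edgeworth correction---is precisely the standard route taken in that reference (and in Appendix~2 of \cite{kipnis1999scaling}), so your approach matches. One point you gloss over and that does require care in the cited proof is uniformity as $\sigma$ approaches $0$ or $\kappa$, where $\chi(\sigma)\to 0$ and the Gaussian in your Edgeworth formula degenerates; the remedy there is a separate combinatorial estimate for densities near the boundary, but since you already defer the details to \cite{gonccalves2013scaling} this is consistent with what the paper itself does.
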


Note that the equivalence of ensembles is a statement concerning measures $\{\nu_\rho\}_{\rho\in [0,\kappa]}$, which has nothing to do with the dynamics. 
The equivalence of ensembles is used to show the following result, see \cite[Proposition 3.2]{gonccalves2013scaling}. 

\begin{proposition}
Let $f:\mathscr X\to \mathbb R$ be a local function  {satisfying $\mathrm{supp}(f)\subset \{ 0,\ldots,\ell_f-1\}$ for some $\ell_f\in\mathbb N$.}
Then there exits a constant $C=C(f,\rho)$ such that 
\begin{equation*}
\begin{aligned}
& E_{\nu_\rho}\Big[ \Big\{
E_{\nu_\rho}[f|\overrightarrow \eta^\ell_0]
- E_{\nu_\rho}[f] 
- \frac{d}{d\rho}E_{\nu_\rho}[f]
\big( \overrightarrow\eta^\ell_0 - \rho \big) \\
&\quad- \frac{1}{2} \frac{d^2}{d\rho^2} E_{\nu_\rho}[f] 
\Big( \big(\overrightarrow\eta^\ell_0-\rho\big)^2 - \frac{\chi(\rho)}{\ell} \Big) \Big\}^2 \Big]
\le C \ell^{-3}
\end{aligned}
\end{equation*}
for any $\ell \ge \ell_f$. 
\end{proposition}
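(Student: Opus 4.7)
The proof reduces the desired $L^2$ estimate to a Taylor expansion combined with moment bounds for the empirical mean, using the previous pointwise equivalence of ensembles as the main input. The plan has three steps.

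\textbf{Step 1 (pointwise reduction).} Apply the previous proposition to write
\begin{equation*}
E_{\nu_\rho}[f \mid \overrightarrow\eta^\ell_0] = \Phi_f(\overrightarrow\eta^\ell_0) + \tfrac{1}{\ell}\, \Psi(\overrightarrow\eta^\ell_0) + R_\ell(\overrightarrow\eta^\ell_0),\qquad \sup_\eta |R_\ell| \le C\ell^{-2},
\end{equation*}
where $\Psi$ is a smooth function of $\sigma \in [0,\kappa]$ built from $\chi$ and $\Phi_f''$. Since $\nu_\rho$ has explicit product form, $\Phi_f$, $\chi$ and $\Psi$ extend smoothly to the compact interval $[0,\kappa]$, hence all derivatives are uniformly bounded there.

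\textbf{Step 2 (Taylor expansion around $\rho$).} Taylor expand $\Phi_f$ to second order with cubic Lagrange remainder $\rho_3(\sigma)$, $|\rho_3(\sigma)|\le C|\sigma-\rho|^3$; and $\tfrac{1}{\ell}\Psi$ to zeroth order with linear remainder $\ell^{-1}\rho_1(\sigma)$, $|\rho_1(\sigma)|\le C|\sigma-\rho|$ (higher-order terms here are already $O(\ell^{-1})$ and need no further expansion). Substituting into the quantity inside the curly braces in the statement, the $\Phi_f(\rho)$, $\Phi_f'(\rho)(\overrightarrow\eta^\ell_0-\rho)$ and $\tfrac12\Phi_f''(\rho)(\overrightarrow\eta^\ell_0-\rho)^2$ terms match those in the claimed expansion by construction, while the constant-in-$\overrightarrow\eta^\ell_0$ piece $\tfrac{1}{\ell}\Psi(\rho)$ is precisely the one that must cancel the renormalizing offset $-\tfrac{\chi(\rho)\Phi_f''(\rho)}{2\ell}$ appearing in the target (this is a direct coefficient check and in fact pins down the constant hidden in the $\frac{2}{\ell}$-term of the first equivalence). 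After this matching, what remains inside the braces is
\begin{equation*}
\rho_3(\overrightarrow\eta^\ell_0) + \tfrac{1}{\ell}\rho_1(\overrightarrow\eta^\ell_0) + R_\ell(\overrightarrow\eta^\ell_0).
\end{equation*}

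\textbf{Step 3 ($L^2$ bound via moments).} Take the expectation of the square. Since $\overrightarrow\eta^\ell_0 - \rho$ is the centered empirical mean of $\ell$ i.i.d.\ bounded random variables under $\nu_\rho$, standard Marcinkiewicz--Zygmund-type estimates give $E_{\nu_\rho}[(\overrightarrow\eta^\ell_0-\rho)^{2k}]\le C_k \ell^{-k}$. Hence
\begin{equation*}
E_{\nu_\rho}[\rho_3^2]\le C\,E_{\nu_\rho}[(\overrightarrow\eta^\ell_0-\rho)^6] = O(\ell^{-3}),\quad E_{\nu_\rho}[\ell^{-2}\rho_1^2]\le C\ell^{-2}E_{\nu_\rho}[(\overrightarrow\eta^\ell_0-\rho)^2]=O(\ell^{-3}),
\end{equation*}
and $E_{\nu_\rho}[R_\ell^2]\le C\ell^{-4}$; cross-terms are estimated by Cauchy--Schwarz and are of the same or smaller order. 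Summing yields the asserted bound $C\ell^{-3}$.

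The only non-mechanical part is the bookkeeping in Step 2: one must verify that the $\ell^{-1}$-correction coming from the first equivalence of ensembles, once Taylor-expanded at $\rho$, combines correctly with the quadratic term from the expansion of $\Phi_f(\overrightarrow\eta^\ell_0)$ to generate the Wick-type renormalization $(\overrightarrow\eta^\ell_0-\rho)^2 - \chi(\rho)/\ell$ prescribed by the statement. Everything else consists of standard Gaussian-style moment estimates for the empirical average.
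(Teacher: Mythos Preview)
The paper does not give its own proof of this proposition; it simply refers to \cite[Proposition 3.2]{gonccalves2013scaling}. Your three-step scheme---apply the pointwise equivalence of ensembles, Taylor-expand $\Phi_f$ and the $\ell^{-1}$-correction around $\rho$, then control the remainders by the moment bounds $E_{\nu_\rho}[(\overrightarrow\eta^\ell_0-\rho)^{2k}]\le C_k\ell^{-k}$---is precisely the standard argument behind that cited result, so your proposal is correct and in line with what the reference does.

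One caveat on the bookkeeping in Step~2: if you take the coefficient $\tfrac{2}{\ell}$ in the pointwise equivalence of ensembles at face value, the constant piece $\tfrac{1}{\ell}\Psi(\rho)=\tfrac{2}{\ell}\chi(\rho)\Phi_f''(\rho)$ does \emph{not} equal the offset $\tfrac{1}{2\ell}\chi(\rho)\Phi_f''(\rho)$ required by the statement; the two differ by a factor of $4$. You seem to have anticipated this when you wrote that the matching ``pins down the constant'' in the first equivalence. Indeed the discrepancy is a typo in the paper's statement of the pointwise result (the coefficient there should be $\tfrac{1}{2\ell}$ rather than $\tfrac{2}{\ell}$; see the cited reference), and once that is corrected your cancellation goes through exactly as described.
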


In particular, for a local function $f$ such that 
\begin{equation}
\label{eq:2bg_condition}
E_{\nu_\rho}[f] 
= %(\partial/\partial\rho) 
\frac{d}{d\rho} E_{\nu_\rho}[f]
= 0 ,
\end{equation}
we have that 
\begin{equation}
\label{eq:EE_consequence}
E_{\nu_\rho}\Big[ \Big\{
E_{\nu_\rho}[f|\overrightarrow \eta^\ell_0]
- \frac{1}{2} \frac{d^2}{d\rho^2} E_{\nu_\rho}[f] 
\Big( \big(\overrightarrow\eta^\ell_0-\rho\big)^2 - \frac{\chi(\rho)}{\ell} \Big) \Big\}^2 \Big]
\le C\ell^{-3} . 
\end{equation}

\subsection{The Boltzmann-Gibbs Principle}
Next we state the first-order Boltzmann-Gibbs principle and its second-order version that was introduced in \cite{gonccalves2014nonlinear}.
In what follows, we use the short-hand notation 
\begin{equation*}
\varphi^n_x(t) = T^-_{v_nt}\varphi(x/n) 
\end{equation*}
for each $\varphi\in\mathcal{S}(\mathbb R)$.
Although the following result may be extended to a more general class of time-dependent test functions with possibly rougher regularity, we may only consider the above form of test functions for the sake of simplicity.

\begin{proposition}[The first-order Boltzmann-Gibbs principle]
\label{prop:1bg}
For any local function $f:\mathscr X\to \mathbb R$ and for any $\varphi\in\mathcal{S}(\mathbb R)$, we have that 
\begin{equation*}
\lim_{n\to\infty}\mathbb E_n\bigg[\sup_{0\le t\le T} \bigg| \int_0^t 
\frac{1}{\sqrt{n}} \sum_{x\in\mathbb Z} 
\Big( \tau_xf(\eta(s)) - E_{\nu_\rho}[f]
- \frac{d}{d\rho} E_{\nu_\rho}[f] \overline\eta_x(s) \Big) \varphi^n_x(s) ds 
\bigg|^2 \bigg]
=0.
%\le C \frac{T^{3/2}}{n} \| \varphi\|^2_{L^2(\mathbb R)} .
\end{equation*}
\end{proposition}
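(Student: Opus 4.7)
The plan is a standard two-scale argument. Introduce a mesoscopic box size $\ell=\ell_n$ with $\ell_n\to\infty$ and $\ell_n^3/n^2\to 0$ (for instance $\ell_n=n^{1/2}$) and decompose
$$V_xf:=\tau_xf-\Phi_f(\rho)-\Phi_f'(\rho)\overline{\eta}_x=U_x^{\ell}+B_x^{\ell},$$
with $B_x^{\ell}:=E_{\nu_\rho}[V_xf\,|\,\overrightarrow\eta^\ell_x]$ and $U_x^{\ell}:=V_xf-B_x^{\ell}$. Since the canonical measure obtained by conditioning $\nu_\sigma$ on $\overrightarrow\eta^\ell_x$ does not depend on $\sigma\in(0,\kappa)$, we have $E_{\nu_\sigma}[U_x^{\ell}]=0$ for every such $\sigma$; moreover, $V_xf$ obeys the two cancellation relations \eqref{eq:2bg_condition} because $\Phi_{V_xf}(\rho)=0$ and $\Phi_{V_xf}'(\rho)=\Phi_f'(\rho)-\Phi_f'(\rho)=0$.

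For the dynamical piece $U_x^{\ell}$, note that $U_x^{\ell}=\tau_x h$ for some fixed local function $h$ supported in a block of size at most $\ell$ with $\|h\|_{L^2(\nu_\rho)}\le C$. The Kipnis-Varadhan inequality (Proposition \ref{prop:KV}) reduces the bound to controlling $\int_0^T \|\sum_x U_x^{\ell}\varphi_x^n(t)\|^2_{-1,n}\,dt$. Partitioning $\mathbb{Z}$ into $\ell$ residue classes modulo $\ell$ so that within each class the translates $\tau_x h$ have pairwise disjoint supports, and then applying the triangle inequality with Cauchy-Schwarz and Proposition \ref{prop:H-1_orthogonality}, one obtains
$$\Big\|\sum_x U_x^{\ell}\varphi_x^n(t)\Big\|^2_{-1,n}\le\frac{C\ell^3}{n^2}\sum_x(\varphi_x^n(t))^2\le\frac{C\ell^3\|\varphi\|^2_{L^2(\mathbb{R})}}{n}.$$
Accounting for the $1/\sqrt{n}$ prefactor, the contribution of $U_x^{\ell}$ is at most $CT\ell^3/n^2$, which vanishes by the choice of $\ell$.

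For the static piece $B_x^{\ell}$, the sharpened equivalence of ensembles \eqref{eq:EE_consequence} applied to $V_xf$, combined with $\Phi_{V_xf}''(\rho)=\Phi_f''(\rho)$, yields
$$B_x^{\ell}=\tfrac12\Phi_f''(\rho)\Big((\overrightarrow\eta^\ell_x-\rho)^2-\tfrac{\chi(\rho)}{\ell}\Big)+R_x^{\ell},\qquad \|R_x^{\ell}\|^2_{L^2(\nu_\rho)}\le C\ell^{-3}.$$
Both the quadratic term and $R_x^{\ell}$ depend on $\eta$ only through the block $\{x,\ldots,x+\ell-1\}$ and are centered under $\nu_\rho$, so translates over disjoint blocks are independent with mean zero. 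A direct covariance computation using $\mathrm{Var}_{\nu_\rho}((\overrightarrow\eta^\ell_x-\rho)^2)\le C/\ell^2$ then yields
$$\mathrm{Var}_{\nu_\rho}\Big(\tfrac{1}{\sqrt{n}}\sum_x B_x^{\ell}\varphi_x^n(t)\Big)\le \frac{C}{\ell}$$
uniformly in $t$, and stationarity combined with Cauchy-Schwarz in time bounds the corresponding contribution by $CT^2/\ell$, which also vanishes.

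The main subtlety is the need to exploit the mean-zero structure of the quadratic remainder: the sharpened equivalence of ensembles, valid thanks to condition \eqref{eq:2bg_condition} for $V_xf$, isolates an explicit centered quadratic function whose variance enjoys cancellations from independence across disjoint blocks, yielding the bound of order $1/\ell$. A cruder $L^\infty$ bound of order $1/\ell$ on $|B_x^{\ell}|$ would instead produce a spatial sum of order $\sqrt{n}/\ell$ and force the more restrictive regime $\ell\gg\sqrt{n}$, leaving less room to simultaneously meet the dynamical constraint $\ell^3\ll n^2$.
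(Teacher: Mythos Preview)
Your proof is correct and follows the same two-step strategy (one-block estimate plus equivalence of ensembles) that underlies the paper's argument, but you take a more direct and slightly more elementary route. The paper deduces Proposition~\ref{prop:1bg} as a one-line corollary of the second-order principle (Proposition~\ref{prop:2bg}) applied to $g=f-\Phi_f(\rho)-\Phi_f'(\rho)\overline\eta_0$, followed by the same direct $L^2$ estimate on the residual centered quadratic term that you carry out for your $B_x^\ell$. The proof of Proposition~\ref{prop:2bg}, in turn, rests on the sharp one-block bound of Lemma~\ref{lem:one-block}, obtained via the multi-scale doubling-the-box argument (Lemma~\ref{lem:doubling_box}), which improves the rate from $\ell^3/n$ to $\ell/n$. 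You bypass this multi-scale refinement entirely and work only with the rough one-block bound of Lemma~\ref{lem:one-block_rough_bound}; this still suffices because the extra factor $n^{-1/2}$ in the first-order statement leaves enough room to close the argument under the weaker constraint $\ell^3\ll n^2$ rather than $\ell\ll n^2$. The trade-off is that your route is self-contained and avoids the iterated renormalization, but it does not by itself yield the second-order principle, whereas the paper's approach obtains the first-order result for free once the harder second-order one is in hand.
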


Next, we state the second-order Boltzmann-Gibbs principle. 
Recall here the definition of the local average \eqref{eq:definition_local_average}. 

\begin{proposition}[The second-order Boltzmann-Gibbs principle]
\label{prop:2bg}
Let $f:\mathscr X\to \mathbb R$ be a local function satisfying \eqref{eq:2bg_condition}. 
Then there exists $C=C(\rho,f)>0$ such that for any $\ell \in\mathbb N$ and $\varphi\in\mathcal{S}(\mathbb R)$,  
\begin{equation*}
\begin{aligned}
&\mathbb E_n\bigg[ \sup_{0\le t\le T} \bigg| \int_0^t \sum_{x\in \mathbb Z}
\Big( \tau_x f(\eta(s)) 
- \frac{1}{2}\frac{d^2}{d\rho^2} E_{\nu_\rho}[f]
\big(\overrightarrow \eta^\ell_x(s)-\rho\big)^2 \Big) \varphi^n_x(s) ds \bigg|^2  \bigg] \\
&\quad \le C\Big( \frac{\ell}{n^2} + \frac{T}{\ell^2} \Big) 
n T\| \varphi\|^2_{L^2(\mathbb R)}. 
\end{aligned}
\end{equation*}
\end{proposition}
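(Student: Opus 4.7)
The plan is to decompose
\[
V^\ell_x(\eta) := \tau_x f(\eta) - \tfrac{1}{2}\Phi''_f(\rho)(\overrightarrow\eta^\ell_x-\rho)^2
\]
into a dynamic piece
\[
f^\ell_x := \tau_x f - E_{\nu_\rho}[\tau_x f \mid \overrightarrow\eta^\ell_x]
\]
and a purely static remainder
\[
R^\ell_x := E_{\nu_\rho}[\tau_x f\mid\overrightarrow\eta^\ell_x] - \tfrac12\Phi''_f(\rho)(\overrightarrow\eta^\ell_x-\rho)^2.
\]
The first piece inherits $E_{\nu_\sigma}[f^\ell_x]=0$ for every $\sigma\in(0,\kappa)$ and therefore can be handled by the Kipnis--Varadhan inequality (Proposition~\ref{prop:KV}) together with the $\mathscr{H}^{-1,n}$-norm estimates, whereas the second piece depends only on the local average $\overrightarrow\eta^\ell_x$ and is controlled directly by the equivalence-of-ensembles estimate \eqref{eq:EE_consequence}.

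For the dynamic piece, a single-scale application of Proposition~\ref{prop:H-1_orthogonality} (decomposing $x$ into residue classes mod $\ell$ so that the supports of the $f^\ell_x$'s are disjoint within each class) yields a squared $\mathscr{H}^{-1,n}$-norm of order $\ell^3/n\cdot\|\varphi\|^2_{L^2(\mathbb{R})}$, which is too coarse. The remedy is a dyadic telescoping: writing $V^{2^k}_x := E_{\nu_\rho}[\tau_x f \mid \overrightarrow\eta^{2^k}_x]$ and choosing $2^{k_0}\ge\ell_f$, $2^K\asymp\ell$, one has
\[
f^\ell_x = \sum_{k=k_0}^{K-1}\bigl(V^{2^k}_x - V^{2^{k+1}}_x\bigr).
\]
Each telescope difference is mean-zero under every $\nu_\sigma$, is supported in a block of size $O(2^k)$, and by the equivalence of ensembles has $L^2(\nu_\rho)$-norm of order $2^{-k}$. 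Decomposing into $2^{k+1}$ residue classes so that within each class the supports become disjoint and invoking Proposition~\ref{prop:H-1_orthogonality}, the scale-$k$ contribution to the squared $\mathscr{H}^{-1,n}$-norm is at most $C\,2^{k+1}\cdot 4^{k+1}/n^2\cdot 4^{-k}\cdot\sum_x(\varphi^n_x(s))^2$, i.e., of order $C\,2^k/n\cdot\|\varphi\|^2_{L^2(\mathbb{R})}$. Summing the resulting geometric series and applying the Kipnis--Varadhan bound produces a contribution of order $CT\ell/n\cdot\|\varphi\|^2_{L^2(\mathbb{R})}$, matching the $(\ell/n^2)\cdot nT\|\varphi\|^2_{L^2(\mathbb{R})}$ part of the right-hand side.

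For the static remainder, I would further split $R^\ell_x = \tilde R^\ell_x - \tfrac12\Phi''_f(\rho)\chi(\rho)/\ell$, where $\tilde R^\ell_x$ is the centered quantity from \eqref{eq:EE_consequence} satisfying $\|\tilde R^\ell_x\|^2_{L^2(\nu_\rho)}\le C\ell^{-3}$. Applying Cauchy--Schwarz in time, the fact that $\tilde R^\ell_x$ is mean-zero under $\nu_\rho$, and the independence of $\tilde R^\ell_x$ and $\tilde R^\ell_{x'}$ when $|x-x'|\ge \ell$, gives
\[
E_{\nu_\rho}\Bigl[\Bigl(\sum_x \tilde R^\ell_x\,\varphi^n_x(s)\Bigr)^2\Bigr]\le \ell\sum_r\sum_{x\equiv r\,(\mathrm{mod}\,\ell)}(\varphi^n_x(s))^2\cdot C\ell^{-3}\le Cn\ell^{-2}\|\varphi\|^2_{L^2(\mathbb{R})},
\]
producing the $(T/\ell^2)\cdot nT\|\varphi\|^2_{L^2(\mathbb{R})}$ part after time integration. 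The deterministic constant $\Phi''_f(\rho)\chi(\rho)/(2\ell)$ contributes a term of the form $c_\ell\int_0^t\sum_x\varphi^n_x(s)\,ds$; although not vanishing for an arbitrary Schwartz $\varphi$, it is of lower order when paired with the gradient test functions $\nabla\varphi$ encountered in the sequel and can be absorbed by the natural centering present in those applications.

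The main technical obstacle is the dyadic argument of the second step: identifying the sharp $2^{-k}$ bound on the $L^2$-norm of the scale differences $V^{2^k}_x-V^{2^{k+1}}_x$ via the equivalence of ensembles, and verifying that the numerology $(2^{k+1})^2 \cdot 2^{-2k}=O(1)$ from multiplying the support-size penalty by the variance decay produces a convergent geometric series rather than the suboptimal single-scale bound. Everything else reduces to careful bookkeeping of the residue-class decompositions and standard applications of the inequalities collected earlier in the section.
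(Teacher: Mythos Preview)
Your approach coincides with the paper's: the identical dynamic/static decomposition, the same dyadic multi-scale argument for the dynamic piece (the paper's Lemmas~\ref{lem:one-block_rough_bound}--\ref{lem:doubling_box}, summed via Minkowski), and the same Cauchy--Schwarz-in-time treatment of the static remainder via \eqref{eq:EE_consequence}. Two small corrections: your telescope identity $f^\ell_x=\sum_k(V^{2^k}_x-V^{2^{k+1}}_x)$ drops the initial piece $\tau_xf-V^{2^{k_0}}_x$, which must be bounded separately by the crude estimate of Lemma~\ref{lem:one-block_rough_bound}; and your remark about the stray constant $\Phi''_f(\rho)\chi(\rho)/(2\ell)$ is well taken---the paper's Lemma~\ref{lem:2bg_key} in fact carries the centering $-\chi(\rho)/\ell$, and the discrepancy with the uncentred statement of Proposition~\ref{prop:2bg} is harmless precisely because every subsequent application pairs it with a discrete gradient $\nabla^n\varphi$.
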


As pointed out in \cite[Remark 11]{gonccalves2014nonlinear}, the first-order Boltzmann-Gibbs is straightforward from the second-order one by taking $f-E_{\nu_\rho}[f] - (d/d\rho)E_{\nu_\rho}[f]\overline\eta_0$ as the local function in Proposition \ref{prop:2bg}.  

   {In the following, we prove Proposition \ref{prop:2bg} for a local function $f$ satisfying $\mathrm{supp}(f)\subset \{ 0,\ldots,\ell_f-1\}$ for some $\ell_f\in\mathbb N$. Once the result is established, for a general local function $f$, we can apply the result to a properly shifted version of $f$ and show that the error term converges to $0$, which implies Proposition \ref{prop:2bg} holds for any local function.}

%In what follows, we give a proof of the second-order Boltzmann-Gibbs principle. 
The proof boils down to showing the following two estimates. 

%\ms{(For the next two lemmas, we need to say something about the case when $\mathrm{supp}(f) \subset \{ 0,\ldots, \ell_0-1\}$ does not hold.)}

\begin{lemma}%[One-block estimate]
\label{lem:one-block}
Let $f:\mathscr X\to \mathbb R$ be a local function satisfying the condition \eqref{eq:2bg_condition} and  $\mathrm{supp}(f) \subset \{ 0,\ldots, \ell_0-1\}$ for some $\ell_0\in\mathbb N$.
Then, there exists $C=C(\rho,f)>0$ such that for any $\varphi\in\mathcal{S}(\mathbb R)$ and any $\ell\in\mathbb N$ such that $\ell \ge \ell_0$,   
\begin{equation*}
\mathbb E_n\bigg[\sup_{0\le t\le T} \bigg| 
\int_0^t \sum_{x\in\mathbb Z} 
\tau_x \big( f(\eta(s)) - E_{\nu_\rho}[f|\overrightarrow\eta^\ell_0(s)]\big) 
\varphi^n_x(s) ds \bigg|^2\bigg]
\le C \frac{\ell }{n} T\| \varphi\|^2_{L^2(\mathbb R)}. 
\end{equation*}
\end{lemma}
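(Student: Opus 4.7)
The plan is to apply the Kipnis--Varadhan inequality (Proposition~\ref{prop:KV}) to reduce the estimate to a uniform-in-time bound on the $\mathscr{H}^{-1,n}$-norm of the integrand, and then to invoke Proposition~\ref{prop:H-1_orthogonality}. First set $g(\eta) = f(\eta) - E_{\nu_\rho}[f\mid\overrightarrow\eta^\ell_0]$; since $\ell\ge\ell_0$ the support of $g$ lies in $\{0,1,\ldots,\ell-1\}$. The canonical-ensemble argument used in the proof of Proposition~\ref{prop:H-1_norm_estimate} shows that $E_{\nu_\rho}[f\mid\overrightarrow\eta^\ell_0]$ does not actually depend on $\rho$, and it then follows by the tower property that $E_{\nu_\sigma}[g]=0$ for every $\sigma\in(0,\kappa)$. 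Writing $F_n(s,\eta) = \sum_{x\in\mathbb Z}\tau_xg(\eta)\,\varphi^n_x(s)$, Proposition~\ref{prop:KV} gives
\[
\mathbb E_n\Big[\sup_{0\le t\le T}\Big|\int_0^tF_n(s,\eta(s))\,ds\Big|^2\Big]\le C\int_0^T\|F_n(s,\cdot)\|_{-1,n}^2\,ds,
\]
so the task reduces to showing $\|F_n(s,\cdot)\|_{-1,n}^2\le C(\ell/n)\|\varphi\|_{L^2(\mathbb R)}^2$ uniformly in $s\in[0,T]$.

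To handle the $\mathscr{H}^{-1,n}$-norm I would decompose $\mathbb Z$ into residue classes modulo $\ell$, writing $F_n(s,\cdot) = \sum_{k=0}^{\ell-1}F_n^{(k)}(s,\cdot)$ with $F_n^{(k)}(s,\eta) = \sum_{m\in\mathbb Z}\varphi^n_{m\ell+k}(s)\,\tau_{m\ell+k}g(\eta)$. Within a single residue class the translates $\{\tau_{m\ell+k}g\}_{m\in\mathbb Z}$ have pairwise disjoint supports of size $\ell$, and since each of them has mean zero under every $\nu_\sigma$, Proposition~\ref{prop:H-1_orthogonality} applies and yields
\[
\|F_n^{(k)}(s,\cdot)\|_{-1,n}^2 \le \sum_{m\in\mathbb Z}\frac{2\gamma\ell^2}{\varepsilon_0n^2}(\varphi^n_{m\ell+k}(s))^2\|g\|_{L^2(\nu_\rho)}^2.
\]
A Riemann-sum comparison gives $\sum_{k=0}^{\ell-1}\sum_{m\in\mathbb Z}(\varphi^n_{m\ell+k}(s))^2 = \sum_{x\in\mathbb Z}(\varphi^n_x(s))^2\le Cn\|\varphi\|_{L^2(\mathbb R)}^2$, while $\|g\|_{L^2(\nu_\rho)}^2\le C(\rho,f)$ is uniform in $\ell$.

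The main obstacle is to combine the $\ell$ residue-class contributions without losing an additional factor of $\ell^2$: the naive triangle inequality together with Cauchy--Schwarz only gives $\|F_n\|_{-1,n}^2\le\ell\sum_k\|F_n^{(k)}\|_{-1,n}^2$, which produces the suboptimal bound $C(\ell^3/n)\|\varphi\|_{L^2}^2$. To recover the sharper $\ell/n$ scaling claimed by the lemma, I would avoid the residue-class decomposition entirely and apply Proposition~\ref{prop:H-1_orthogonality} only once: partition $\mathbb Z$ into consecutive disjoint blocks of length comparable to $\ell$ and assign each term $\varphi^n_x(s)\tau_xg$ to the unique block containing its support $\{x,\ldots,x+\ell-1\}$, treating by a parallel shifted partition the terms whose supports straddle a block boundary. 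A single application of Proposition~\ref{prop:H-1_orthogonality} to each of the two resulting disjoint-support families then absorbs $\sum_{x\in\mathbb Z}(\varphi^n_x(s))^2\le Cn\|\varphi\|_{L^2}^2$ only once (with the $L^2$-norm of each block-piece controlled by $\ell$ rather than $\ell^2$ thanks to the decorrelation $E_{\nu_\rho}[\tau_xg\cdot\tau_{x'}g]=0$ when $|x-x'|\ge\ell$), and produces the target estimate. Integrating in $t\in[0,T]$ then yields $C(\ell/n)\,T\,\|\varphi\|_{L^2(\mathbb R)}^2$ as desired.
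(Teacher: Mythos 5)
Your reduction via Proposition \ref{prop:KV}, the verification that $g=f-E_{\nu_\rho}[f\mid\overrightarrow\eta^\ell_0]$ is mean zero under every $\nu_\sigma$ (indeed the conditional expectation is independent of $\rho$), and your diagnosis that the residue-class splitting only yields $C\ell^3/n$ are all correct — this is exactly the content of the paper's rough bound (Lemma \ref{lem:one-block_rough_bound}). The gap is in your proposed fix: the block decomposition with a single application of Proposition \ref{prop:H-1_orthogonality} does \emph{not} produce the target rate. That proposition charges a factor $2\gamma\ell_j^2/(\varepsilon_0 n^2)$ per block, with $\ell_j\sim\ell$, and your own decorrelation bound on a block piece $G_j=\sum_{x\in A_j}\varphi^n_x(s)\,\tau_x g$ gives $\|G_j\|^2_{L^2(\nu_\rho)}\le C\ell\,\|g\|^2_{L^2(\nu_\rho)}\sum_{x\in A_j}(\varphi^n_x(s))^2$. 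Summing over blocks,
\begin{equation*}
\|F_n(s,\cdot)\|^2_{-1,n}
\le \frac{C\ell^2}{n^2}\cdot C\ell\,\|g\|^2_{L^2(\nu_\rho)}\sum_{x\in\mathbb Z}(\varphi^n_x(s))^2
\le \frac{C\ell^3}{n}\,\|g\|^2_{L^2(\nu_\rho)}\|\varphi\|^2_{L^2(\mathbb R)},
\end{equation*}
which is again $\ell^3/n$, not $\ell/n$. No refinement of the correlation analysis can rescue this single-scale scheme: the diagonal terms alone give $\|G_j\|^2\ge c\,\|g\|^2_{L^2(\nu_\rho)}\sum_{x\in A_j}(\varphi^n_x(s))^2$, and $\|g\|^2_{L^2(\nu_\rho)}$ does not decay in $\ell$ (it increases to $\|f\|^2_{L^2(\nu_\rho)}$ as $\ell\to\infty$), so the spectral-gap factor $\ell^2$ caps you at $\ell^2/n$ at best. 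A telltale sign is that your argument never uses the hypothesis \eqref{eq:2bg_condition}, which is essential for the sharp rate.

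The missing idea is the multiscale (dyadic) argument. The paper applies the crude bound only at the fixed scale $\ell_0$ of the support of $f$, where $\ell_0^3/n$ is simply $C(f)/n$, and then telescopes
\begin{equation*}
f-E_{\nu_\rho}[f\mid\overrightarrow\eta^{\ell}_0]
=\big(f-E_{\nu_\rho}[f\mid\overrightarrow\eta^{\ell_0}_0]\big)
+\sum_{m=1}^{M}\Big(E_{\nu_\rho}[f\mid\overrightarrow\eta^{2^{m-1}\ell_0}_0]-E_{\nu_\rho}[f\mid\overrightarrow\eta^{2^{m}\ell_0}_0]\Big),
\qquad \ell=2^M\ell_0 .
\end{equation*}
For each dyadic increment, the equivalence of ensembles \eqref{eq:EE_consequence} — which is where \eqref{eq:2bg_condition} enters — gives the variance decay $E_{\nu_\rho}\big[(E_{\nu_\rho}[f\mid\overrightarrow\eta^{\ell'}_0]-E_{\nu_\rho}[f\mid\overrightarrow\eta^{2\ell'}_0])^2\big]\le C(\ell')^{-2}$; this $(\ell')^{-2}$ exactly cancels the $(\ell')^2$ spectral-gap factor, so the block/residue argument at scale $\ell'$ yields $C\ell'/n$ (Lemma \ref{lem:doubling_box}). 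Minkowski's inequality then sums the square roots $\sqrt{2^{m-1}\ell_0/n}$ into a geometric series dominated by the top scale, giving $C\ell/n$. In short: the $\ell^2$ cost of the $\mathscr H^{-1,n}$ estimate can only be beaten by replacing the fixed function $g$ by a sequence of increments whose $L^2$-norms shrink with the scale, and that shrinking is precisely what the equivalence of ensembles under \eqref{eq:2bg_condition} provides.
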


\begin{lemma}
\label{lem:2bg_key}
Let $f:\mathscr X\to \mathbb R$ be a local function satisfying the condition \eqref{eq:2bg_condition} and $\mathrm{supp}(f) \subset\{ 0,\ldots, \ell_0-1\}$ for some $\ell_0\in\mathbb N$.
Then, there exists $C=C(\rho,f)>0$ such that for any $\varphi\in\mathcal{S}(\mathbb R)$ and any $\ell\in\mathbb N$ such that $\ell\ge \ell_0$,   
\begin{equation*}
\begin{aligned}
&\mathbb E_n\bigg[\sup_{0\le t\le T} \bigg| 
\int_0^t \sum_{x\in\mathbb Z} 
\tau_x \Big( E_{\nu_\rho}[f|\overrightarrow\eta^\ell_0(s)]
- \frac{1}{2}\frac{d^2}{d\rho^2} 
E_{\nu_\rho}[f] \Big( (\overrightarrow\eta^\ell_0(s) -\rho)^2 - \frac{\chi(\rho)}{\ell} \Big) \Big)
\varphi^n_x(s) ds \bigg|^2\bigg] \\
&\quad\le C \frac{n T}{\ell^2} \| \varphi\|^2_{L^2(\mathbb R)}. 
\end{aligned}
\end{equation*}
\end{lemma}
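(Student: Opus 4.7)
The strategy is to exploit the smallness in $L^2(\nu_\rho)$ of the integrand—provided by the second-order equivalence of ensembles \eqref{eq:EE_consequence}—together with the product structure of $\nu_\rho$. Set
\begin{equation*}
g_\ell(\eta) := E_{\nu_\rho}\bigl[f\,\big|\,\overrightarrow\eta^\ell_0\bigr] - \tfrac{1}{2}\Phi_f''(\rho)\Bigl((\overrightarrow\eta^\ell_0-\rho)^2 - \chi(\rho)/\ell\Bigr),
\end{equation*}
so that the space sum appearing in the lemma is $\sum_{x\in\mathbb Z}\tau_x g_\ell(\eta(s))\,\varphi^n_x(s)$. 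Observe that $g_\ell$ depends only on $\eta_0,\ldots,\eta_{\ell-1}$ (since $\ell\ge\ell_0$), that $E_{\nu_\rho}[g_\ell]=0$ by a direct computation using $\mathrm{Var}_{\nu_\rho}(\overrightarrow\eta^\ell_0)=\chi(\rho)/\ell$ together with the hypothesis \eqref{eq:2bg_condition}, and—crucially—that $E_{\nu_\rho}[g_\ell^2]\le C\ell^{-3}$ by \eqref{eq:EE_consequence}.

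First I would apply Cauchy–Schwarz in time to remove the supremum, and then invoke stationarity of the process under $\nu_\rho$, obtaining
\begin{equation*}
\mathbb E_n\!\left[\sup_{0\le t\le T}\Bigl|\int_0^t\sum_{x\in\mathbb Z}\tau_x g_\ell(\eta(s))\,\varphi^n_x(s)\,ds\Bigr|^2\right] \le T\int_0^T E_{\nu_\rho}\!\left[\Bigl(\sum_{x\in\mathbb Z}\tau_x g_\ell\cdot\varphi^n_x(s)\Bigr)^2\right]ds .
\end{equation*}
Next I would expand the inner square and split the resulting double sum according to whether $|x-y|\ge\ell$ or $|x-y|<\ell$. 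For $|x-y|\ge\ell$, the translates $\tau_x g_\ell$ and $\tau_y g_\ell$ depend on disjoint collections of occupation variables, hence are independent under the product measure $\nu_\rho$; combined with $E_{\nu_\rho}[g_\ell]=0$, the corresponding cross-terms vanish. For $|x-y|<\ell$, Cauchy–Schwarz bounds each $|E_{\nu_\rho}[\tau_x g_\ell\cdot\tau_y g_\ell]|$ by $E_{\nu_\rho}[g_\ell^2]\le C\ell^{-3}$. Using $|ab|\le (a^2+b^2)/2$ one has $\sum_{x}\sum_{|y-x|<\ell}|\varphi^n_x(s)\varphi^n_y(s)|\le 2\ell\sum_x|\varphi^n_x(s)|^2$, and a Riemann-sum estimate yields $\sum_x|\varphi^n_x(s)|^2\le Cn\|\varphi\|^2_{L^2(\mathbb R)}$. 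Putting these together, the inner expectation is of order $Cn\ell^{-2}\|\varphi\|^2_{L^2(\mathbb R)}$; integrating in time then closes the estimate.

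The main obstacle I anticipate is the exact $T$-dependence: the naive Cauchy–Schwarz step produces a factor $T^2$ rather than $T$, so the bound obtained by this route is of the form $CnT^2\ell^{-2}\|\varphi\|^2_{L^2(\mathbb R)}$. A linear-in-$T$ bound would normally come from the Kipnis–Varadhan inequality (Proposition \ref{prop:KV}), but this requires $E_{\nu_\sigma}[g_\ell]=0$ for \emph{every} $\sigma\in(0,\kappa)$, whereas condition \eqref{eq:2bg_condition} only forces this to leading order in $\sigma-\rho$; one would need an additional decomposition of $g_\ell$ (e.g.\ subtracting the analytic function $\sigma\mapsto E_{\nu_\sigma}[g_\ell]$ at the level of canonical measures) to recover the KV framework. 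Since $T$ is fixed throughout and Lemma~\ref{lem:one-block} together with the downstream application in Proposition~\ref{prop:2bg} is consistent with this quadratic $T$-dependence, the discrepancy is harmless. A subsidiary delicate point is the identity $E_{\nu_\rho}[g_\ell]=0$: it hinges on the precise subtraction $\chi(\rho)/\ell$, which is exactly the normalisation that cancels the non-zero mean of $(\overrightarrow\eta^\ell_0-\rho)^2$ under $\nu_\rho$, and it is what makes the product-measure independence argument above actually deliver a cancellation on the far-diagonal terms.
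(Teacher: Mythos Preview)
Your approach is correct and matches what the paper does. The paper gives only a one-line hint for this lemma---``follows from \eqref{eq:EE_consequence} and the Cauchy--Schwarz inequality in the same way as Lemma~\ref{lem:one-block_rough_bound}''---and this amounts to precisely the argument you wrote: bound the time integral by Cauchy--Schwarz (not Kipnis--Varadhan), then exploit the product structure of $\nu_\rho$ and the $L^2$ smallness $E_{\nu_\rho}[g_\ell^2]\le C\ell^{-3}$ coming from the equivalence of ensembles. The only cosmetic difference is that the paper's phrasing suggests splitting the spatial sum into $\ell$ residue classes (as in the proof of Lemma~\ref{lem:one-block_rough_bound}) rather than your near/far-diagonal decomposition, but the two are equivalent and yield the same bound.

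Your remark on the $T$-dependence is apt and worth recording. Because $g_\ell$ is mean-zero only under $\nu_\rho$ and not under every $\nu_\sigma$, the Kipnis--Varadhan machinery (Propositions~\ref{prop:KV} and~\ref{prop:H-1_norm_estimate}) is unavailable, so one is forced into the crude Cauchy--Schwarz-in-time step, which produces $T^2$ rather than $T$. The stated bound $CnT/\ell^2$ therefore appears to be a typo for $CnT^2/\ell^2$; this is corroborated by the form of the combined bound in Proposition~\ref{prop:2bg}, whose second term $(T/\ell^2)\cdot nT = nT^2/\ell^2$ is exactly what your argument delivers. As you note, since $T$ is fixed this is harmless for the downstream applications.
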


Hereafter we give a proof of Lemma \ref{lem:one-block}.
For that purpose, first we show the following rough bound by the Kipnis-Varadhan inequality. 

\begin{lemma}
\label{lem:one-block_rough_bound}
Let $f:\mathscr X\to \mathbb R$ be a local function {satisfying} $\mathrm{supp}(f) \subset \{ 0,\ldots, \ell_0-1\}$ for some $\ell_0\in\mathbb N$.
Then, there exists $C=C(\rho,f)>0$ such that for any $\varphi\in\mathcal{S}(\mathbb R)$, 
\begin{equation*}
\begin{aligned}
\mathbb E_n\bigg[\sup_{0\le t\le T} \bigg| \int_0^t 
\sum_{x\in\mathbb Z} \tau_x \big( f(\eta(s)) - E_{\nu_\rho}[f|\overrightarrow \eta^{\ell_0}_0(s)] \big)
\varphi^n_x(s) ds \bigg|^2 \bigg]
\le C \frac{\ell_0^3}{n} 
T\| \varphi\|^2_{L^2(\mathbb R)}. 
\end{aligned}
\end{equation*}
\end{lemma}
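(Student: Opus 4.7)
The plan is to apply the Kipnis--Varadhan inequality (Proposition \ref{prop:KV}) to the centered local function $\bar f(\eta) := f(\eta) - E_{\nu_\rho}[f\mid \overrightarrow\eta^{\ell_0}_0]$, and to control the resulting $\mathscr H^{-1,n}$-norm by splitting the spatial sum into residue classes modulo $\ell_0$. Observe first that $\bar f$ is supported on $\{0,\ldots,\ell_0-1\}$ and satisfies $E_{\nu_\sigma}[\bar f]=0$ for every $\sigma \in (0,\kappa)$: indeed, the conditional expectation $E_{\nu_\rho}[f\mid \overrightarrow\eta^{\ell_0}_0]$ is the canonical average on the block $\{0,\ldots,\ell_0-1\}$, which depends only on the local particle sum and is independent of the reference parameter $\rho$, so that $E_{\nu_\sigma}[E_{\nu_\rho}[f\mid \overrightarrow\eta^{\ell_0}_0]]=E_{\nu_\sigma}[f]$ by exactly the canonical--grand-canonical identification already used in the proof of Proposition \ref{prop:H-1_norm_estimate}. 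Also, by translation invariance of $\nu_\rho$, each shift $\tau_z\bar f$ keeps the same $L^2(\nu_\rho)$-norm, which is bounded by a constant $C(\rho,f)$ since $\bar f$ is a bounded function on a finite-range state space.

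Next, write
\[
F_n(s,\eta) := \sum_{x\in\mathbb Z} \tau_x \bar f(\eta)\,\varphi^n_x(s) = \sum_{j=0}^{\ell_0-1} g_j(s,\eta), \quad g_j(s,\eta) = \sum_{k\in\mathbb Z} \tau_{j+k\ell_0}\bar f(\eta)\,\varphi^n_{j+k\ell_0}(s).
\]
Within each residue class $j$, the supports of the shifts $\tau_{j+k\ell_0}\bar f$ are pairwise disjoint intervals of length $\ell_0$, so Proposition \ref{prop:H-1_orthogonality} applies and yields
\[
\|g_j(s,\cdot)\|^2_{-1,n} \le \frac{2\gamma \ell_0^2}{\varepsilon_0 n^2}\,\|\bar f\|^2_{L^2(\nu_\rho)} \sum_{k\in\mathbb Z}(\varphi^n_{j+k\ell_0}(s))^2.
\]
Recombining the $\ell_0$ classes via the triangle inequality for $\|\cdot\|_{-1,n}$ followed by Cauchy--Schwarz produces an extra factor of $\ell_0$, giving
\[
\|F_n(s,\cdot)\|^2_{-1,n} \le \ell_0 \sum_{j=0}^{\ell_0-1}\|g_j(s,\cdot)\|^2_{-1,n} \le \frac{C(\rho,f)\,\ell_0^3}{n^2}\sum_{x\in\mathbb Z}(\varphi^n_x(s))^2.
\]

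Finally, inserting this bound into Proposition \ref{prop:KV} and using the elementary Riemann-sum estimate
\[
\sum_{x\in\mathbb Z}(\varphi^n_x(s))^2 = \sum_{x\in\mathbb Z}\varphi\Big(\tfrac{x-v_n s}{n}\Big)^2 \le C\,n\,\|\varphi\|^2_{L^2(\mathbb R)},
\]
uniform in $s \in [0,T]$ for any $\varphi \in \mathcal S(\mathbb R)$, produces precisely the claimed bound $C\,\ell_0^3\,T\,n^{-1}\,\|\varphi\|^2_{L^2(\mathbb R)}$. The only subtle point is the Cauchy--Schwarz recombination of the residue classes, which converts the natural $\ell_0^2$ coming from the orthogonality estimate into $\ell_0^3$; this explains why the statement is labelled a \emph{rough} bound, and it will be refined in the proof of Lemma \ref{lem:one-block} by iterating over an increasing sequence of window sizes.
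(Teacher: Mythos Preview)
Your proof is correct and follows essentially the same approach as the paper: both split the spatial sum into residue classes modulo $\ell_0$, apply Proposition~\ref{prop:H-1_orthogonality} on the resulting disjoint supports, and use Cauchy--Schwarz to absorb the recombination cost as an extra factor of $\ell_0$. The only cosmetic difference is that you apply Proposition~\ref{prop:KV} first and then decompose the $\mathscr H^{-1,n}$-norm via the triangle inequality, whereas the paper performs the Cauchy--Schwarz splitting at the level of $\mathbb E_n[\sup_t|\cdot|^2]$ and then applies Kipnis--Varadhan to each residue class separately; both orderings are valid and yield the same bound.
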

\begin{proof}
The assertion easily follows by splitting the sum in $x$ into $\ell_0$ boxes. 
Indeed, by the Cauchy-Schwarz inequality, the left-hand side of the assertion is bounded {from} above by 
\begin{equation*}
\ell_0 \sum_{i=0}^{\ell_0-1} 
\mathbb E_n\bigg[ \sup_{0\le t\le T} \bigg| \int_0^t \sum_{z\in\mathbb Z} 
\tau_{z\ell_0 + i} \big( f(\eta(s)) -E_{\nu_\rho}[f|\overrightarrow\eta^{\ell_0}_0(s)]\big)
\varphi^n_{z\ell_0+i}(s)ds \bigg|^2 \bigg] 
\end{equation*}
multiplied by some constant $C(\rho,f)>0$.  
Since each function in the summand in the above display is mean-zero with respect to $\nu_\sigma$ for any $\sigma\in (0,\kappa)$, then by Propositions \ref{prop:KV} and \ref{prop:H-1_orthogonality}, we bound last display from above by a constant times
\begin{equation*}
\frac{\ell_0^3}{n^2} \sum_{i=0}^{\ell_0-1} \sum_{z\in\mathbb Z} 
\int_0^T 
(\varphi^n_{z\ell_0+i}(s))^2ds 
\end{equation*}
so that we
obtain the desired bound. 
\end{proof}

Next, we improve the bound of the one-block estimate by a multi-scale argument.

\begin{lemma}[Doubling the box]
\label{lem:doubling_box}
Let $f:\mathscr X\to\mathbb R$ be a local function satisfying the condition \eqref{eq:2bg_condition}{and $\mathrm{supp}(f) \subset\{ 0,\ldots, \ell_0-1\}$ for some $\ell_0\in\mathbb N$}. 
Then, there exists $C=C(\rho,f)>0$ such that for any $\ell \ge \ell_0$ and any $\varphi \in \mathcal{S}(\mathbb R)$,  
\begin{equation*}
\begin{aligned}
\mathbb E_n \bigg[\sup_{0\le t\le T} \bigg| \int_0^t \sum_{z\in\mathbb Z} 
\tau_z \big( E_{\nu_\rho}[f|\overrightarrow\eta^{\ell}_0(s)]
- E_{\nu_\rho}[f|\overrightarrow\eta^{2\ell}_0(s)] \big) \varphi^n_z(s) ds 
\bigg|^2 \bigg]
\le  \frac{C\ell }{n} T\| \varphi\|^2_{L^2(\mathbb R)}. 
\end{aligned}
\end{equation*}
\end{lemma}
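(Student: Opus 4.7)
The plan is to combine the Kipnis--Varadhan inequality with the orthogonality estimate of Proposition \ref{prop:H-1_orthogonality}, after reducing the task to an $L^2(\nu_\rho)$ bound on the difference of the two block--conditional expectations.

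Set $g(\eta):=E_{\nu_\rho}[f|\overrightarrow\eta^{\ell}_0]-E_{\nu_\rho}[f|\overrightarrow\eta^{2\ell}_0]$, so that $\mathrm{supp}(g)\subset\{0,\ldots,2\ell-1\}$. As in the proof of Proposition \ref{prop:H-1_norm_estimate}, the conditional expectation $E_{\nu_\rho}[f|\sigma(\sum_{x=0}^{m-1}\eta_x)]$ does not depend on the density parameter, and hence $E_{\nu_\sigma}[g]=E_{\nu_\sigma}[f]-E_{\nu_\sigma}[f]=0$ for every $\sigma\in(0,\kappa)$. First I would split the spatial sum into residue classes modulo $2\ell$,
\begin{equation*}
\sum_{z\in\mathbb Z}\tau_z g\,\varphi^n_z(s)
=\sum_{i=0}^{2\ell-1}\sum_{k\in\mathbb Z}\tau_{2\ell k+i}g\,\varphi^n_{2\ell k+i}(s),
\end{equation*}
so that within each residue class the translates $\{\tau_{2\ell k+i}g\}_{k\in\mathbb Z}$ have pairwise disjoint supports. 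Cauchy--Schwarz in $i$ produces a factor $2\ell$, and then for each fixed $i$ Proposition \ref{prop:KV} together with Proposition \ref{prop:H-1_orthogonality} gives
\begin{equation*}
\mathbb E_n\bigg[\sup_{0\le t\le T}\bigg|\int_0^t\sum_{k\in\mathbb Z}\tau_{2\ell k+i}g\,\varphi^n_{2\ell k+i}(s)\,ds\bigg|^2\bigg]
\le C\int_0^T\sum_{k\in\mathbb Z}\frac{(2\ell)^2}{n^2}(\varphi^n_{2\ell k+i}(s))^2\|g\|^2_{L^2(\nu_\rho)}\,ds.
\end{equation*}
Summing in $i$ and using $\sum_{x\in\mathbb Z}\int_0^T(\varphi^n_x(s))^2\,ds\le C\,nT\|\varphi\|^2_{L^2(\mathbb R)}$, the left-hand side of the lemma is controlled by $C\,\ell^{3}\|g\|^2_{L^2(\nu_\rho)}\,T/n$. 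Hence the whole statement would follow from the key bound
\begin{equation*}
\|g\|^2_{L^2(\nu_\rho)}\le C\,\ell^{-2}.
\end{equation*}

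To prove this key bound, I would invoke the equivalence-of-ensembles consequence \eqref{eq:EE_consequence} at the scales $\ell$ and $2\ell$: under \eqref{eq:2bg_condition}, each conditional expectation $E_{\nu_\rho}[f|\overrightarrow\eta^{\ell}_0]$ and $E_{\nu_\rho}[f|\overrightarrow\eta^{2\ell}_0]$ is approximated in $L^2(\nu_\rho)$, with error of order $\ell^{-3/2}$, by the respective quadratic polynomials $\tfrac{1}{2}\Phi_f''(\rho)\bigl((\overrightarrow\eta^{\ell}_0-\rho)^2-\chi(\rho)/\ell\bigr)$ and $\tfrac{1}{2}\Phi_f''(\rho)\bigl((\overrightarrow\eta^{2\ell}_0-\rho)^2-\chi(\rho)/(2\ell)\bigr)$. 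By the triangle inequality, this reduces the estimate of $\|g\|_{L^2(\nu_\rho)}$ to that of
\begin{equation*}
\Delta_\ell := (\overrightarrow\eta^\ell_0-\rho)^2-(\overrightarrow\eta^{2\ell}_0-\rho)^2-\tfrac{\chi(\rho)}{2\ell},
\end{equation*}
which, after writing $\overrightarrow\eta^{2\ell}_0=(\overrightarrow\eta^\ell_0+\overrightarrow\eta^\ell_\ell)/2$ and exploiting the independence of $\overrightarrow\eta^\ell_0$ and $\overrightarrow\eta^\ell_\ell$ under $\nu_\rho$, becomes an elementary fourth-moment calculation yielding $\|\Delta_\ell\|_{L^2(\nu_\rho)}=O(\ell^{-1})$; squaring then gives the desired $\ell^{-2}$ bound.

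The main obstacle is precisely this $L^2$ estimate on $g$, and the place where hypothesis \eqref{eq:2bg_condition} is indispensable: without the vanishing of the first derivative of $\Phi_f$ at $\rho$, the equivalence of ensembles would produce a leading term linear in $\overrightarrow\eta^\ell_0-\rho$ of $L^2$-size $\ell^{-1/2}$, which is too large by a factor $\ell^{1/2}$ to close the estimate. Everything else — the modular decomposition, the Kipnis--Varadhan bound, and the bookkeeping of powers of $\ell$ and $n$ — is routine given the machinery already set up in this section.
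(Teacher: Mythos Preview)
Your proof is correct and follows essentially the same route as the paper's: split the sum into residue classes modulo $2\ell$, apply Cauchy--Schwarz to pick up a factor $2\ell$, then use the Kipnis--Varadhan inequality together with Proposition \ref{prop:H-1_orthogonality} on each class, and finally plug in the $L^2(\nu_\rho)$ bound $\|g\|_{L^2(\nu_\rho)}^2\le C\ell^{-2}$. The paper records this last bound simply as ``a consequence of \eqref{eq:EE_consequence}'' without elaboration; your derivation via the triangle inequality and the fourth-moment estimate on $\Delta_\ell$ is a perfectly valid way to unpack that step (an even shorter variant: bound each of $E_{\nu_\rho}[(E_{\nu_\rho}[f|\overrightarrow\eta^{\ell}_0])^2]$ and $E_{\nu_\rho}[(E_{\nu_\rho}[f|\overrightarrow\eta^{2\ell}_0])^2]$ separately by $C\ell^{-2}$ using \eqref{eq:EE_consequence} plus $E_{\nu_\rho}[(\overrightarrow\eta^\ell_0-\rho)^4]=O(\ell^{-2})$, then combine via $(a-b)^2\le 2a^2+2b^2$).
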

\begin{proof}
The strategy of the proof is the same as that of the previous lemma, and the assertion easily follows from \eqref{eq:EE_consequence}.
Indeed, note that the support of $E_{\nu_\rho}[f|\overrightarrow\eta^{\ell}_0]
- E_{\nu_\rho}[f|\overrightarrow\eta^{2\ell}_0]$ is included in the interval $\{0, \ldots, 2\ell -1\} $. 
Then, again by splitting the sum in $x$ into boxes of size $2\ell$ and using Proposition \ref{prop:KV} we have that 
\begin{equation*}
\begin{aligned}
& 2\ell \sum_{i=0}^{2\ell-1} 
\mathbb E_n \bigg[\sup_{0\le t \le T} \bigg| \int_0^t 
\sum_{z\in\mathbb Z} \tau_{2\ell z + i} 
\big( E_{\nu_\rho}[f|\overrightarrow\eta^{\ell}_0(s)]
- E_{\nu_\rho}[f|\overrightarrow\eta^{2\ell}_0(s)]\big)
\varphi^n_{2\ell z+ i}(s) ds \bigg|^2 \bigg] \\
&\quad \leq 2\ell \sum_{i=0}^{2\ell-1} 
 \int_0^T \Big\|
\sum_{z\in\mathbb Z} \tau_{2\ell z + i} 
\big( E_{\nu_\rho}[f|\overrightarrow\eta^{\ell}_0(s)]
- E_{\nu_\rho}[f|\overrightarrow\eta^{2\ell}_0(s)]\big)
\varphi^n_{2\ell z+ i}(s) \Big\|_{-1,n}^2 ds.
\end{aligned}\end{equation*}
At this point, we can use Proposition  \ref{prop:H-1_orthogonality} to bound last display by 
\begin{equation*}
\begin{aligned}
&\frac{C\ell^3T}{n^2} \sum_{i=0}^{2\ell-1} 
\sum_{z\in\mathbb Z} E_{\nu_\rho} \big[ (E_{\nu_\rho}[f|\overrightarrow\eta^{\ell}_0]
- E_{\nu_\rho}[f|\overrightarrow\eta^{2\ell}_0])^2 \big]
\varphi^n_{2\ell z+ i}(s)^2 
\end{aligned}\end{equation*}
for some $C>0$.
The proof follows from the next estimate which is a consequence of \eqref{eq:EE_consequence}: 
\begin{equation*}
E_{\nu_\rho} \big[ (E_{\nu_\rho}[f|\overrightarrow\eta^{\ell}_0]
- E_{\nu_\rho}[f|\overrightarrow\eta^{2\ell}_0])^2 \big]
\le {C(\rho,f)} \ell^{-2}.
\end{equation*}
\end{proof}

%Now we give a proof of the one-block estimate with the proper order. 

\begin{proof}[Proof of Lemma \ref{lem:one-block}]
First, we show the assertion when $\ell = 2^M \ell_0$ for some $M\in\mathbb N$. 
Note that 
\begin{equation*}
\begin{aligned}
f(\eta) - E_{\nu_\rho}[f|\overrightarrow\eta^\ell_0]
= f(\eta) - E_{\nu_\rho}[f|\overrightarrow\eta^{\ell_0}_0]
+ \sum_{m=1}^M \Big\{ 
E_{\nu_\rho}[f|\overrightarrow\eta^{2^{m-1}\ell_0}_0]
- E_{\nu_\rho}[f|\overrightarrow\eta^{2^m\ell_0}_0]
\Big\} .
\end{aligned}
\end{equation*}
By Minkowski's inequality, we have the bound 
\begin{equation*}
\begin{aligned}
&\mathbb E_n \bigg[\sup_{0\le t\le T} \bigg| \int_0^t \sum_{x\in\mathbb Z} 
\sum_{m=1}^M 
\tau_x \big( E_{\nu_\rho}[f|\overrightarrow\eta^{2^{m-1}\ell_0}_0(s)]
- E_{\nu_\rho}[f|\overrightarrow\eta^{2^m\ell_0}_0(s)] \big) \varphi^n_x(s) ds 
\bigg|^2 \bigg]^{1/2} \\
&\quad\le C
\sum_{m=1}^M 
\mathbb E_n \bigg[\sup_{0\le t\le T} \bigg| \int_0^t \sum_{x\in\mathbb Z} 
\tau_x \big( E_{\nu_\rho}[f|\overrightarrow\eta^{2^{m-1}\ell_0}_0(s)]
- E_{\nu_\rho}[f|\overrightarrow\eta^{2^m\ell_0}_0(s)] \big) \varphi^n_x(s) ds 
\bigg|^2 \bigg]^{1/2} \\
&\quad\le C \sum_{m=1}^M \Big( \frac{2^{m-1}\ell_0 }{n}  \int_0^T\| \varphi(s)\|^2_{L^2(\mathbb R)}ds \Big)^{1/2} 
\le C 
\Big( \frac{2^M\ell_0}{n}\Big)^{1/2}  
%\frac{(2^M\ell_0)^{1/2} T^{1/2}}{\sqrt{n}} 
\Big(\int_0^T\| \varphi(s)\|^2_{L^2(\mathbb R)}ds \Big)^{1/2} 
\end{aligned}
\end{equation*}
for some $C=C(\rho,f)>0$ where we used Lemma \ref{lem:doubling_box} in the second estimate. 
Then the assertion immediately follows from the above estimate and Lemma \ref{lem:one-block_rough_bound}. 
Finally, for general $\ell$, it suffices to take a number $M\in\mathbb N$ in such a way that $2^M\ell_0< \ell < 2^{M+1} \ell_0$ and then directly estimate the quantity
\begin{equation*}
\int_0^t \sum_{x\in\mathbb Z} 
\tau_x \big( E_{\nu_\rho}[f| \overrightarrow\eta^{2^M\ell_0}(s)] 
- E_{\nu_\rho}[f|\overrightarrow\eta^\ell_0(s)] \big)
\varphi^n_x (s) ds,
\end{equation*}
{by using a result similar to Lemma \ref{lem:doubling_box}.}
Hence we complete the proof. 
\end{proof}

Now we note here that the second key estimate (Lemma \ref{lem:2bg_key}) easily follows from \eqref{eq:EE_consequence} and the Cauchy-Schwarz inequality in the same way as Lemma \ref{lem:one-block_rough_bound}. 
Hence the proof of the second-order Boltzmann-Gibbs principle is completed.

\section{Proof Outline: Derivation of KPZ/SBE}
\label{sec:pep_outline}
In this section, we give a sketch of the proof of Theorem \ref{thm:sbe_from_pep}.  
Our starting point is Dynkin's martingale decomposition. 
For $\varphi \in \mathcal{S}(\mathbb R)$, 
\begin{equation*}
\mathcal{M}^n_t(\varphi)
= \mathcal{X}^n_t(\varphi) 
- \mathcal{X}^n_0(\varphi)
- \int_0^t (\partial_s + L_n) \mathcal{X}^n_s(\varphi) ds
\end{equation*} 
and $\mathcal{M}^n_t(\varphi)^2 - \langle \mathcal{M}^n(\varphi) \rangle_t$ are martingales with respect to the natural filtration where 
\begin{equation}
\label{eq:pep_quadratic_variation}
\begin{aligned}
\langle \mathcal{M}^n(\varphi) \rangle_t 
&= \int_0^t \big( L_n(\mathcal{X}^n_s(\varphi))^2 - 2 \mathcal{X}^n_s(\varphi) L_n \mathcal{X}^n_s(\varphi) \big) ds \\
&=\int_0^t \frac{1}{n} \sum_{x\in\mathbb Z} 
\big( p_n r(\eta_x,\eta_{x+1}) 
+ q_n r(\eta_{x+1},\eta_x) \big)(s) 
T^-_{v_ns} (\nabla^n\varphi^n_x)^2 ds .
\end{aligned}
\end{equation} 
Since the measure $\nu_\rho$ is product, note that $E_{\nu_\rho}[r(\eta_x,\eta_y)]=E_{\nu_\rho}[r(\eta_y,\eta_x)]$ for any $x,y \in \mathbb Z$ such that $x\neq y$. 
Then, 
\begin{equation*}
\lim_{n\to \infty}
\langle \mathcal{M}^n(\varphi) \rangle_t 
= t E_{\nu_\rho}[r(\eta_0,\eta_1)]\| \nabla \varphi \|^2_{L^2(\mathbb R)}{=t \Phi_r(\rho) \| \nabla \varphi \|^2_{L^2(\mathbb R)}}
\end{equation*}
in $L^2(\mathbb P_n)$, which indicates that the martingale part converges to the space-time white-noise. 
Hereafter we compute the action of the generator, splitting it into symmetric and anti-symmetric parts. 
We begin with the symmetric part. 
Note that 
\begin{equation*}
\begin{aligned}
\int_0^t S_n\mathcal{X}^n_s(\varphi) ds 
&= \sqrt{n} \int_0^t \sum_{x\in\mathbb Z} 
W^S_{x,x+1}(s) T^-_{v_ns}\nabla^n\varphi^n_x ds \\
&= \frac{1}{2\sqrt{n}} \int_0^t \sum_{x\in\mathbb Z} 
c(\kappa) c(\eta_x) T^-_{v_ns} \Delta^n \varphi^n_x ds 
\end{aligned}
\end{equation*}
where in the last line we used the gradient condition \eqref{eq:gradient_condition} and executed the second integration-by-parts.  Note that since $\sum_{x\in\mathbb Z}\Delta_N \varphi^n_x=0$ we can introduce for free the average of $c(\eta_x)$ with respect to the invariant measure, namely $\Phi_c(\rho)$. 
Therefore, the last display writes as 
\begin{equation*}
\begin{aligned}
 \frac{1}{2\sqrt{n}} \int_0^t \sum_{x\in\mathbb Z} 
c(\kappa) \big\{c(\eta_x)-\Phi_c(\rho)\big\} T^-_{v_ns} \Delta^n \varphi^n_x ds 
\end{aligned}
\end{equation*}
Moreover, by the first-order Boltzmann-Gibbs principle, we can replace $c(\eta_x)-\Phi_c(\rho)$ by $\Phi_c'(\rho) \overline{\eta}_x$.
%$(d/d\sigma) E_{\nu_\sigma}[c(\eta_x)]|_{\sigma=\rho} (\eta_x-\rho)$. 
Hence the last quantity is further computed as 
\begin{equation*}
D(\rho)%\frac{d(\kappa)}{2} \frac{\partial}{\partial\rho} E_{\nu_\rho}[c(\eta_x)] 
\int_0^t \mathcal{X}^n_s(\Delta \varphi^n_x) ds
\end{equation*}
plus some error term which vanishes in the limit, using $D(\rho)$ defined in \eqref{eq:diffusion_coefficient}.  
Above, we replaced the discrete Laplacian by the continuous one, with a cost that can be estimated by the Cauchy-Schwarz inequality, and is of order 
\begin{equation*}
\begin{aligned}
&\mathbb E_n \bigg[ \sup_{0\le t\le T}
\bigg| \int_0^t \frac{1}{\sqrt{n}}
\sum_{x\in\mathbb Z} \Phi_c(\rho)\bar\eta_x
T^-_{v_ns} (\Delta^n \varphi^n_x - \Delta \varphi^n_x)
ds \bigg|^2 \bigg]\\
&\quad\le {\Phi_c(\rho)^2 \chi(\rho)}\frac{T}{n} \int_0^T \sum_{x\in\mathbb Z} 
T^-_{v_ns}(\Delta^n \varphi^n_x - \Delta \varphi^n_x)^2 ds
= O(n^{-4})
\end{aligned}
\end{equation*}
noting $|\Delta^n\varphi^n_x-\Delta \varphi^n_x| = O(n^{-2})$ by Taylor's theorem.
Next, we deal with the anti-symmetric part. 
Recall the velocity $v_n$ defined by \eqref{eq:pep_moving_frame} and the current $W^A_{x,x+1}$ given in \eqref{eq:pep_current_computation_anti-symmetric}. 
We note that 
\begin{equation*}
\begin{aligned}
\int_0^t (\partial_s + A_n) \mathcal{X}^n_s(\varphi) ds 
&= \sqrt{n} \int_0^t \sum_{x\in\mathbb Z} 
\Big( W^A_{x,x+1}(\eta(s)) -\frac{v_n}{n^2} \overline\eta_x (s)\Big) T^-_{v_ns}\nabla^n\varphi^n_x ds \\
&= \sqrt{n}(p_n-q_n) \int_0^t\sum_{x\in\mathbb Z} 
\tau_xV (\eta(s)) T^-_{v_ns} \nabla^n \varphi^n_x ds
\end{aligned}\end{equation*}
plus some small factor which comes from the replacement of the continuous derivative by the discrete one. 
Above, we defined the local function $V$ by 
\begin{equation}
\label{eq:nonlinear_term_element}
V(\eta)
= \frac{1}{2}(r_{0,1}+r_{1,0})(\eta) 
- \frac{d}{d\rho}(2\chi(\rho)D(\rho)) \overline{\eta}_0.  
\end{equation}
By \eqref{eq:gk_formula}, the local function $V$ satisfies 
\begin{equation*}
\frac{d}{d\sigma} E_{\nu_\sigma}[V(\eta)]\Big|_{\sigma=\rho}
= 0. 
\end{equation*}
%Moreover, note that the constant term is irrelevant in the limit. 
Therefore, according to the second-order Boltzmann-Gibbs principle (Proposition \ref{prop:2bg}), the anti-symmetric part gives rise in the limit to the nonlinear term of the SBE.  
In summary, we were able to obtain the following decomposition. 
\begin{equation}
\label{eq:martingale_decomposition}
\mathcal{X}^n_t(\varphi)
= \mathcal{X}^n_0(\varphi)
+ \mathcal{S}^n_t(\varphi)
+ \mathcal{B}^n_t(\varphi)
+ \mathcal{M}^n_t(\varphi)
+ \mathcal{R}^n_t(\varphi)
\end{equation}
where for each test function $\varphi\in\mathcal{S}(\mathbb R)$, we have  
\begin{equation*}
\mathcal{S}^n_t(\varphi)
= D(\rho) %\frac{d(\kappa)}{2} \frac{\partial}{\partial\rho} E_{\nu_\rho}[c(\eta_x)] 
\int_0^t \mathcal{X}^n_s(\Delta \varphi)ds 
\end{equation*}
and 
\begin{equation}
\label{eq:anti-symmetric_part_definition}
\mathcal{B}^n_t(\varphi)
= \alpha \int_0^t \sum_{x\in\mathbb Z} 
\tau_x\overline V(\eta(s)) T^-_{v_ns}\nabla^n\varphi^n_x ds ,
\end{equation}
with $\overline V= V-E_{\nu_\rho}[V]$ where the local function $V$ is defined in \eqref{eq:nonlinear_term_element}. 
Moreover, $\mathcal{R}^n_t(\varphi)$ is a negligible remainder term in the sense that 
\begin{equation*}
\lim_{n\to\infty}\mathbb E_n\bigg[\sup_{0\le t\le T} \big| \mathcal{R}^n_t(\varphi) \big|^2 \bigg]  
= 0. 
\end{equation*}

\section{Tightness}
\label{sec:pep_tightness}
In this section, we show that {the sequence $\{\mathcal{X}_t^n\}_n$} is tight. 
For that purpose, we apply Mitoma's criterion~\cite[Theorem 4.1]{mitoma1983tightness} to see that it suffices to show the tightness of the sequence of real-valued processes $\{ \mathcal{X}^n_t(\varphi)\}_n$ for any $\varphi\in\mathcal{S}(\mathbb R)$. 

{Recall that we have the martingale decomposition \eqref{eq:martingale_decomposition}. Note that for the initial field, by looking at the characteristic function, for each $\varphi\in \mathcal{S}(\mathbb R)$, the sequence $\{\mathcal{X}^n_0(\varphi)\}_n$, converges to a normal random variable, which particularly means that it is tight. 
Hence, we show the tightness of the martingale, and the time integral of symmetric and anti-symmetric parts. In the following, we also observe that all of our limiting objects are continuous functions of time. The tightness in the Skorohod space plus the continuity of the limit implies the tightness with respect to the uniform topology, for details, we refer the reader to \cite[Section 12, p.124]{billingsley1968convergence}.

Hence, the tightness with respect to the uniform topology of the fields $\{ \mathcal{X}^n_t(\varphi): t\in [0,T]\}_n$, and thus that of $\{ \mathcal{X}^n_t: t\in [0,T]\}_n$, follow from the martingale decomposition \eqref{eq:martingale_decomposition}. 

\subsection{Martingale Part}
To show the tightness of the martingale part, we use the following criterion given in \cite[Theorem VIII.3.12]{jacod2013limit}.

\begin{proposition}
\label{prop:martingale_characterization}
Let $\{M^n_t\}_{n\in\mathbb N}$ be a sequence of martingales belonging to the space $D([0,T],\mathbb R)$ and denote by $\langle M^n\rangle_\cdot$ the quadratic variation 
%\ms{(since this is the jump process, we may need to define more precisely what is the quadratic variation)}\textcolor{magenta}{Since this is taken from Jacod Shiryauev the reader can check there the precise definition, I would leave it like this}\ms{ok!} 
of $M^n_\cdot$, for any $n\in\mathbb N$. 
Let $c:[0,T]\to\mathbb R_+$ be a deterministic continuous function. 
Assume that 
\begin{itemize}
    \item[(1)] for any $n\in\mathbb N$, the quadratic variation process $\{\langle M^n\rangle_t:t\in[0,T]\}$ has continuous trajectories almost surely,
    \item[(2)] the following limit holds:
    \begin{equation*}
    \lim_{n\to\infty} \mathbb E\Big[\sup_{0\le t\le T} \big| M^n_t-M^n_{t-}\big| \Big] 
    = 0,
    \end{equation*}
    \item[(3)] for any $t\in[0,T]$, the sequence of random variables $\{\langle M^n\rangle_t \}_n$ converges to $c(t)$ in probability.
\end{itemize}
Then, the sequence $\{M^n_t:t\in[0,T]\}_n$ converges in distribution on $D([0,T],\mathbb R)$ as $n\to\infty$ to a mean-zero Gaussian process $\{ M_t :t\in[0,T]\}$, which is a martingale with continuous trajectories and whose quadratic variation is given by $c$. 
\end{proposition}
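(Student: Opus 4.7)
The plan is to prove tightness of $\{M^n_\cdot\}_n$ in $D([0,T],\mathbb R)$ and identify every subsequential limit as a continuous Gaussian martingale with quadratic variation $c$. This is the standard functional martingale central limit theorem.

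For tightness, I would invoke the Aldous--Rebolledo criterion, which for martingales reduces tightness of $\{M^n_\cdot\}_n$ to tightness of the associated predictable quadratic variations $\{\langle M^n\rangle_\cdot\}_n$. Each $\langle M^n\rangle_\cdot$ is non-decreasing in $t$ and, by hypothesis (3), converges in probability pointwise to the deterministic continuous function $c$. A Dini-type argument valid for monotone functions with a continuous limit upgrades this pointwise convergence to uniform convergence in probability on $[0,T]$, which immediately yields tightness of $\{\langle M^n\rangle_\cdot\}_n$, and hence of $\{M^n_\cdot\}_n$.

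Continuity of any subsequential limit $M$ follows from hypothesis (2): the functional $\omega\mapsto\sup_{t\le T}|\Delta\omega(t)|$ is lower semicontinuous on $D([0,T],\mathbb R)$, so by the portmanteau theorem and the $L^1$-vanishing of $\sup_{t\le T}|\Delta M^n_t|$, the weak limit $M$ has no jumps almost surely. To identify $M$ as a martingale with quadratic variation $c$, I would pass to the limit the identities $\mathbb E[(M^n_t - M^n_s)\Phi] = 0$ and $\mathbb E[((M^n_t)^2 - \langle M^n\rangle_t - (M^n_s)^2 + \langle M^n\rangle_s)\Phi] = 0$, which hold for every $0\le s<t\le T$ and every bounded continuous functional $\Phi$ of the trajectory on $[0,s]$. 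Once $M$ is known to be a continuous martingale with deterministic continuous quadratic variation $c$, the Dambis--Dubins--Schwarz representation gives $M_t = B_{c(t)}$ for some standard Brownian motion $B$; equivalently, Itô's formula applied to $\exp(i\xi M_t + \xi^2 c(t)/2)$ shows that the finite-dimensional marginals are centered Gaussian with covariance $c(s\wedge t)$, which also identifies the limit uniquely and rules out the need to extract further subsequences.

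The main obstacle is the uniform integrability required to pass the two martingale identities above to the limit: hypothesis (3) only guarantees convergence in probability, so expectations cannot be exchanged with weak limits directly. The standard remedy is to localize at stopping times $T^n_R = \inf\{t\ge 0 : \langle M^n\rangle_t > R\}$, apply Doob's $L^2$-inequality to the stopped martingale $M^n_{\cdot\wedge T^n_R}$ to obtain $\mathbb E[\sup_{t\le T}(M^n_{t\wedge T^n_R})^2]\le 4R$ uniformly in $n$, pass to the limit first in $n$ and then in $R\to\infty$, using that $\mathbb P(T^n_R\le T)\to\mathbb P(c(T)\ge R)\to 0$ by hypothesis (3) and the continuity of $c$. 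This localization, carried out carefully, yields the desired martingale property of $M$ and closes the proof.
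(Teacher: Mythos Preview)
The paper does not prove this proposition; it simply quotes it as \cite[Theorem~VIII.3.12]{jacod2013limit} and then verifies that its particular martingale $\mathcal{M}^n(\varphi)$ satisfies the three hypotheses. So there is no in-paper argument to compare against.

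Your sketch is a correct outline of the standard proof of the martingale functional CLT and would succeed if written out carefully. The tightness step via Rebolledo's criterion together with the Dini/P\'olya argument (monotone convergence to a continuous limit implies uniform convergence) is the usual route; note that for Rebolledo you also need the jump condition, which hypothesis~(2) supplies. The continuity of the limit via lower semicontinuity of $\omega\mapsto\sup_t|\Delta\omega(t)|$ on $D$ and the portmanteau theorem is correct. The localization at $T^n_R$ to obtain the uniform $L^2$-bound needed to pass the martingale identities to the limit is the right mechanism, and the identification of a continuous martingale with deterministic bracket as Gaussian via DDS (or equivalently L\'evy's characterization after a time change) closes the argument. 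One minor point: in the stopped bound you should allow for one jump of $\langle M^n\rangle$ past $R$, but hypothesis~(1) guarantees $\langle M^n\rangle$ is continuous, so $\langle M^n\rangle_{T^n_R}\le R$ exactly and your bound $\mathbb E[\sup_{t\le T}(M^n_{t\wedge T^n_R})^2]\le 4R$ is clean.
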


Once we could verify all items of the previous criterion, the convergence, and thus tightness, of the sequence $\{\mathcal{M}^n_t(\varphi):t\in[0,T]\}_n$ follow for any $\varphi\in\mathcal{S}(\mathbb R)$. 
The first item of Proposition \ref{prop:martingale_characterization} easily follows from the expression \eqref{eq:pep_quadratic_variation}. 
To show the second item, note that we have the relation 
\begin{equation*}
\sup_{0\le t\le T} \big| \mathcal{M}^n_t(\varphi) -\mathcal{M}^n_{t^-}(\varphi) \big|
\le 2 \sup_{0\le t\le T} \big| \mathcal{X}^n_t(\varphi) -\mathcal{X}^n_{t^-}(\varphi) \big|
\end{equation*}
for any $\varphi\in\mathcal{S}(\mathbb R)$. 
Suppose that a jump from $x$ to $x+1$ occurs at time $t^-$. 
Noting the probability that more than two jumps occur simultaneously is zero, we have that
\begin{equation*}
\big| \mathcal{X}^n_t(\varphi) -\mathcal{X}^n_{t-}(\varphi) \big|
= \frac{1}{\sqrt{n}} | \varphi^n_{x+1}-\varphi^n_x|
\le n^{-3/2} \|\nabla \varphi\|_\infty. 
\end{equation*}
%\ms{(I would like to confirm : Is it clear that "the probability that more than two jumps occur simultaneously is zero" for a process on $\mathbb Z$? I believe this should be true though...)}\textcolor{magenta}{Yes, I don't see what could be the problem, at each time there is at most one jump.}\ms{Ok, I checked Ligget's book and agreed. }
Since the right-hand side of the last display does not depend on $t$, the second item of Proposition \ref{prop:martingale_characterization} holds. 
Finally, regarding the third item, recall the expression \eqref{eq:pep_quadratic_variation}. 
Even more strongly than the condition, we can show that 
\begin{equation*}
\lim_{n\to\infty}
\mathbb E_n \Big[ \Big( 
\langle \mathcal{M}^n(\varphi)\rangle_t
 -E_{\nu_\rho}[\langle \mathcal{M}^n(\varphi)\rangle_t]
\Big)^2 \Big]
=0 
\end{equation*}
by a direct computation. 
Then, by combining this with the fact that $\langle \mathcal{M}^n(\varphi)\rangle_t$ converges to $t\Phi_r(\rho) \|\nabla \varphi\|^2_{L^2(\mathbb R)}$ in $L^2{(\mathbb P_n)}$, we could verify the third item and thus the tightness of the martingale part follows.

\subsection{Symmetric Part} 
Note that by the Cauchy-Schwarz inequality we have that 
\begin{equation*}
\begin{aligned}
\mathbb E_n \Big[ \big| \mathcal{S}^n_t(\varphi)-\mathcal{S}^n_s(\varphi)\big|^2 \Big]
&\le C |t-s| \int_s^t \frac{1}{n} \sum_{x\in\mathbb Z} 
E_{\nu_\rho}[(\overline\eta_x)^2] 
(T^-_{v_nr}\Delta \varphi^n_x)^2 dr \\
&\le C |t-s|^2 \| \Delta \varphi\|^2_{L^2(\mathbb R)}
\end{aligned}
\end{equation*}
for any $s,t\in [0,T]$.
Hence the tightness of the symmetric part follows from the Kolmogorov-Chentsov criterion (see for example \cite[Theorem 1.2.1]{revuz2013continuous}) and continuity of the process.

\subsection{Anti-Symmetric Part}
Finally, we conclude this section by showing the tightness of the anti-symmetric part. 
By the second-order Boltzmann-Gibbs principle (Proposition \ref{prop:2bg}) and stationarity, there exists a constant $C>0$ such that  
\begin{equation*}
\begin{aligned}
& \mathbb E_n\bigg[ \bigg| 
\mathcal{B}^n_t(\varphi)-\mathcal{B}^n_s(\varphi)
- \Lambda(\rho) 
%\frac{\alpha}{2} \frac{\partial^2}{\partial\rho^2} E_{\nu_\rho}[V]
\int_s^t \sum_{x\in\mathbb Z} \big( \overrightarrow\eta^\ell_x(r)-\rho\big)^2 
T^-_{v_nr}\nabla^n \varphi^n_x dr \bigg|^2 \bigg]  \\
&\quad \le C \Big(\frac{(t-s)\ell}{n}+ \frac{(t-s)^2n}{\ell^2} \Big) \| \nabla\varphi\|^2_{L^2(\mathbb R)}
\end{aligned}
\end{equation*}
for any $s,t \in [0,T]$ such that $s<t$.
Above, recall that $\Lambda(\rho)$ is defined in \eqref{eq:sbe_nonlinear_coeff}. 
On the other hand, note that by a direct $L^2$-estimate, we have that 
\begin{equation*}
\mathbb E_n \bigg[ \bigg| \int_s^t \sum_{x\in\mathbb Z} \big( \overrightarrow\eta^\ell_x(r)-\rho\big)^2 
T^-_{v_nr}\nabla^n \varphi^n_x dr\bigg|^2 \bigg] 
\le C \frac{(t-s)^2n}{\ell} \|\nabla\varphi\|^2_{L^2(\mathbb R)}. 
\end{equation*}
When $1/n^2 \le t-s$, we take a natural number $\ell$ {as $\ell=[(t-s)^{1/2}n]$ where $[\cdot]$ is the Gauss symbol}, which yields 
\begin{equation*}
\mathbb E_n \Big[\big| \mathcal{B}^n_t(\varphi)-\mathcal{B}^n_s(\varphi)\big|^2 \Big] 
\le C (t-s)^{3/2} \|\nabla\varphi\|^2_{L^2(\mathbb R)}. 
\end{equation*}
On the other hand, when $t-s \le 1/n^2$, {recalling} {from \eqref{eq:anti-symmetric_part_definition}} {the definition of $\mathcal{B}^n_t(\varphi)$} we can directly estimate as 
\begin{equation*}
\mathbb E_n 
\Big[\big| \mathcal{B}^n_t(\varphi)-\mathcal{B}^n_s(\varphi)\big|^2 \Big] 
\le C (t-s)^2 n \| \nabla\varphi\|^2_{L^2(\mathbb R)} 
\le C (t-s)^{3/2} \| \nabla\varphi\|^2_{L^2(\mathbb R)} . 
\end{equation*}
Hence, similarly to the symmetric part, the tightness of the anti-symmetric part follows from the Kolmogorov-Chentsov criterion and the continuity of the process.

\section{Identification of Limit Points}
\label{sec:pep_identification}
Up to now, we proved that each fluctuation field appearing in the martingale decomposition \eqref{eq:martingale_decomposition} is tight with respect to the Skorohod topology of $D([0,T],\mathcal{S}'(\mathbb R))$. 
In this section we identify the limit points of these sequences and complete the proof of Theorem \ref{thm:sbe_from_pep}. 
Let $\mathscr Q^n$ be the distribution of 
\begin{equation*}
\big\{(\mathcal{X}^n_t, \mathcal{M}^n_t, \mathcal{S}^n_t, \mathcal{B}^n_t)
: t\in [0,T] \big\}  
\end{equation*}
for each $n \in\mathbb N$.
Then, there exists a subsequence $n$, which is denoted by the same letter by abuse of notation, such that the sequence $\{ \mathscr Q^n\}_n$ converges to a limit point $\mathscr Q$. 
Let $\mathcal{X}$, $\mathcal{M}$, $\mathcal{S}$ and $\mathcal{B}$ be the respective limits in distribution of each component. 
Since the tightness is shown also with respect to the uniform topology of $D([0,T], \mathcal{S}'(\mathbb R))$, we know that all these limiting processes almost surely have continuous trajectories. 
Therefore, it suffices to characterize the limit points by the stationary energy solution of the SBE in the sense of Definition \ref{def:energysol}, since the convergence along the full sequence $n$ follows by the uniqueness in law of the solution. 

First, for the characterization of the martingale part, recall that we already proved, by checking the conditions of Proposition \ref{prop:martingale_characterization}, that the limit point $\{\mathcal{M}_t(\varphi): t\in[0,T]\}$ is a continuous martingale whose quadratic variation is $t \|\nabla\varphi\|^2_{L^2(\mathbb R)}$. 
%This can be conducted by a routine work, so that we omit the proof here, see \cite[Subsection 5.3.1]{goncalves2023derivation} for the details. 
For the symmetric part, we can easily show that any limit point $\mathcal{S}$ satisfies 
\begin{equation*}
\mathcal{S}_t (\varphi)
= D(\rho)\int_0^t \mathcal{X}_s(\Delta \varphi)ds 
\end{equation*}
in distribution, for any $\varphi\in\mathcal{S}(\mathbb R)$. 
Therefore, our task is now to characterize the limit point of the anti-symmetric part.
Note here that we have the identity 
\begin{equation*}
\mathcal{X}^n_t\big( \iota_\varepsilon\big(
{\textstyle \frac{x-v_nt}{n}, \cdot} \big)\big)
= \sqrt{n} \big( \overrightarrow\eta^{\varepsilon n}_x-\rho\big)
\end{equation*}
where recall that the function $\iota_\varepsilon$ is defined in \eqref{eq:definition_iota}. 
Using this identity, recall that the second-order Boltzmann-Gibbs principle yields
\begin{equation}
\label{eq:2bg_consequence}
\begin{aligned}
& \mathbb E_n\bigg[ \bigg| 
\mathcal{B}^n_t(\varphi)-\mathcal{B}^n_s(\varphi)
- \frac{\Lambda(\rho)}{n}
%\frac{\alpha}{2n} \frac{\partial^2}{\partial\rho^2} E_{\nu_\rho}[r_{0,1}] 
\int_s^t \sum_{x\in\mathbb Z} \mathcal{X}^n_t\big( \iota_\varepsilon\big(
{\textstyle \frac{x-v_nr}{n}, \cdot} \big)\big)^2 
T^-_{v_nr}\nabla^n \varphi^n_x dr \bigg|^2 \bigg]  \\
&\quad \le C \Big(\frac{(t-s)\ell}{n}+ \frac{(t-s)^2n}{\ell^2} \Big) \| \nabla\varphi\|^2_{L^2(\mathbb R)}.
\end{aligned}
\end{equation}
Moreover, note that the limit point $\mathcal{X}$ of the sequence $\{ \mathcal{X}^n_t: t\in [0,T]\}_n$ clearly satisfies the condition \textbf{(S)}. 
Consequently, we obtain the limit 
\begin{equation*}
\mathcal{A}^\varepsilon_{s,t}(\varphi)
= \lim_{n\to\infty} \frac{1}{n}
\int_s^t \sum_{x\in\mathbb Z} \mathcal{X}^n_t\big( \iota_\varepsilon\big(
{\textstyle \frac{x-v_nt}{n}, \cdot} \big)\big)^2
T^-_{v_nr}\nabla^n \varphi^n_x dr 
\end{equation*}
where $\mathcal{A}^\varepsilon_{s,t}(\varphi)$ is the process we defined in \eqref{eq:def_quadratic_function_approximation} with $u= \mathcal{X}$.   
Here note that the above convergence does not follow immediately since the function $\iota_\varepsilon$ is not in the space $\mathcal{S}(\mathbb R)$, but this procedure can justified by a proper approximation, see \cite[Section 5.3]{gonccalves2014nonlinear} for details. 
Hence, {letting $\ell=\varepsilon n$} and $n\to \infty$ in \eqref{eq:2bg_consequence}, we obtain 
\begin{equation*}
\mathbb E \Big[ \Big| \mathcal{B}_t(\varphi)-\mathcal{B}_s(\varphi) 
- \Lambda(\rho) %\frac{\alpha}{2} \frac{\partial^2}{\partial\rho^2}E_{\nu_\rho}[r_{0,1}] 
\mathcal{A}^\varepsilon_{s,t} (\varphi) \Big|^2\Big] 
\le C \varepsilon (t-s) \| \nabla\varphi\|^2_{L^2(\mathbb R)}.
\end{equation*}
By the triangle inequality, the energy condition \textbf{(EC)} follows. 
As a result, by Proposition \ref{prop:nonlinear}, we obtain the existence of the limit 
\begin{equation*}
\mathcal{A}_t(\varphi) 
= \lim_{\varepsilon\to 0} \mathcal{A}^\varepsilon_{0,t}(\varphi)
\end{equation*}
and the above consequence of the second-order Boltzmann-Gibbs principle yields that 
\begin{equation*}
\mathcal{B} =
\Lambda(\rho) 
%\frac{\alpha}{2} \frac{\partial^2}{\partial\rho^2}E_{\nu_\rho}[r_{0,1}] 
\mathcal{A} .  
\end{equation*}

Finally, we note here that all the above estimates hold in exactly the same way for the reversed process $\{ \mathcal{X}^n_{T-t}:t\in [0,T]\}$ by repeating the same argument for the dynamics generated by $L_n^*$, so that the third item in Definition \ref{def:energysol} is satisfied. 
Hence, we conclude that the process $\mathcal{X}$ is the stationary energy solution of the SBE and complete the proof of Theorem \ref{thm:sbe_from_pep}.

\if0
\newpage
\section{Notes about the question about coupled KPZ!?}\textcolor{magenta}{Patricia edited below}

Makiko suggested to look at the case that the antisymmetric part of current is given by

$$j^A_{x,x+1}=(\eta_x^B+\eta_{x+1}^B+c_A)(k\eta_x^A+k\eta_{x+1}^A-2\eta_x^A\eta_{x+1}^A)$$
This means that the average wrt the invariant state is

$$\langle j^A_{x,x+1}\rangle=2(2 \rho_B+c_A)k\rho_A(1-\rho_A)$$ while the jacobian is given by

$J_{\rho_A,\rho_B}=\begin{pmatrix} 2(2 \rho_B+c_A)(k-2\rho_A) & 4 \rho_A(k-\rho_A)\\ 4\rho_B(k-\rho_B) & 2(2 \rho_A+c_B)(k-2\rho_B) 
\end{pmatrix}$

Now we go to the case $\rho_A=\rho_B=\rho$ to get last identity as
$J_\rho=\begin{pmatrix} 2(2 \rho+c_A)(k-2\rho) & 4 \rho(k-\rho)\\ 4\rho(k-\rho) & 2(2 \rho+c_B)(k-2\rho) 
\end{pmatrix}$
The eigenvalues of this matrix satisfy the identity

$$\Big(2(2 \rho+c_A)(k-2\rho)-\lambda\Big)\Big(2(2 \rho+c_B)(k-2\rho_B)-\lambda\Big)-16 \rho^2(k-\rho)^2=0.$$

The discriminant of last equation is given by

$$\Lambda=4 (k-2\rho)^2\Big(4\rho+c_A+c_B\Big)^2-4\Big( (2\rho+c_A)(2\rho+c_B)(k-2\rho)^2-16\rho^2(k-\rho)^2$$

A simple computation shows that last identity is equal to 

$$\Lambda=4 (k-2\rho)^2\Big(c_A-c_B\Big)^2+64\rho^2(k-\rho)^2$$
and this is always positive so we have two different velocities and two modes.

There are two special cases inside last one that give the same : Case I is when $\rho=k$ and Case II is when  $\rho=k/2$. 
In Case I we get 
$J_\rho=\begin{pmatrix} 2(2 \rho+c_A)(k-2\rho) & 0\\0 & 2(2 \rho+c_B)(k-2\rho) 
\end{pmatrix}$

and in this case 

$$\Lambda=4 k^2\Big(c_A-c_B\Big)^2$$

Then the eigenvalues are $$v_\pm=\frac{-2 (k-2\rho)(4\rho+c_A+c_B)\pm 2k(c_A-c_B)}{2}$$ and this gives the solutions $v_+=k(4\rho+2c_A)$ and $v_-=k(4\rho+2c_B)$, so if $c_A=c_B=c$
then the velocity is the same. 

In Case II we get 
$J_\rho=\begin{pmatrix} 0 & k^2\\ k^2 & 0
\end{pmatrix}$
and the eigenvectors are $\pm k^2$ whose corresponding eigenvectors are $(u,u)$ and $(u,-u)$.

Let us go back to the current. We know that

$$j^A_{x,x+1}=(\eta_x^B+\eta_{x+1}^B+c_A)(k\eta_x^A+k\eta_{x+1}^A-2\eta_x^A\eta_{x+1}^A)$$
and by centering and discarding gradients we get

$$j^A_{x,x+1}=(2\eta_x^B+c_A)(2k\eta_x^A-2\bar \eta_x^A\bar\eta_{x+1}^A-2\rho_A\eta_x^A+2\rho_A^2)$$

\begin{equation*}
\begin{aligned}
j^A_{x,x+1}
&=(2\eta_x^B+c_A)(2k\eta_x^A-2\eta_x^A\eta_{x+1}^A)\\
&=(2\eta_x^B+c_A)(2k\eta_x^A-2\overline \eta_x^A\overline\eta_{x+1}^A-4\rho_A\overline\eta_x^A-2\rho_A^2)\\
&=(2\eta_x^B+c_A)(2k\overline\eta_x^A-2\overline \eta_x^A\overline\eta_{x+1}^A-4\rho_A\overline\eta_x^A-2\rho_A^2+2\rho_A\kappa).
\end{aligned}
\end{equation*}

\textcolor{green}{Agree! I lost a two. I corrected below.} 

\begin{equation*}
\begin{aligned}
j^A_{x,x+1}
=(2\bar \eta_x^B+c_A+2\rho_B)((2k-4\rho_A)\overline\eta_x^A-2\overline \eta_x^A\overline\eta_{x+1}^A-2\rho_A^2+2\rho_A\kappa).
\end{aligned}
\end{equation*}

Since the current has this form we will always have to take some smart linear combination of terms, otherwise the degree one term in B will blow up...
\fi

\section*{Acknowledgments}
%KH was supported by JSPS KAKENHI Grant Number JP22J12607.
%Additionally, the author would like to thank Makiko Sasada and Hayate Suda for giving him fruitful comments and suggestions. 
P.G. thanks FCT/Portugal for financial support
through the research center CAMGSD, IST-ID, projects UIDB/04459/2020 and UIDP/04459/2020 and also for financial support through the project ERC-FCT SAUL. 
MS was supported by JSPS KAKENHI Grant Number JP18H03672.

%%%%%%%%%%%%%%%%
%  References  %
%%%%%%%%%%%%%%%%
\bibliographystyle{abbrv}
\bibliography{ref}

\end{document}